\newtheorem{thm}{Theorem}[section]
\newtheorem{prop}[thm]{Proposition}
\newtheorem{cor}{Corollary}[thm]
\newtheorem{lemma}[thm]{Lemma}
\theoremstyle{definition}
\newtheorem{defn}[thm]{Definition}
\newtheorem{eg}[thm]{Example}
\newtheorem{remark}[thm]{Remark}
\newcommand{\Mod}[1]{{\text{Mod-}{#1}}}
\newcommand{\cat}[1]{\mathcal{#1}}
\newcommand{\der}[1]{{\mathcal{D}} \left({#1} \right) }
\newcommand{\derstar}[1]{{\mathcal{D}}^{*} \! \left({#1} \right)}
\newcommand{\derstarnone}[1]{{\mathcal{D}}^{*} \! \left( \text{Mod} \right)}
\newcommand{\homotopy}[1]{{\mathcal{K}}\left({#1} \right)}
\newcommand{\im}[1]{\text{im} ( {#1} )}
\newcommand{\kernel}[1]{\text{ker} ( {#1} )}
\newcommand{\tens}[3]{{#1} {\otimes_{#2}} {#3}}
\newcommand{\ltens}[3]{{#1} \otimes_{#2}^L {#3}}
\newcommand{\Homgroup}[3]{\text{Hom}_{#1} \left( {#2}, {#3} \right)}
\newcommand{\Hom}[5]{\text{Hom}_{#1} \left( {_{#2}{#3}},{_{#4}{#5}} \right)}
\newcommand{\rHom}[3]{\text{RHom}_{#1} \left( {#2},{#3} \right)}
\newcommand{\dHom}[3]{\text{Hom}_{\der{#1}} \left( {#2},{#3} \right)}
\newcommand{\kHom}[3]{\text{Hom}_{\homotopy{#1}} \left( {#2},{#3} \right)}
\newcommand{\ext}[4]{\text{Ext}^{#1}_{#2} ( {#3}, {#4} )}
\newcommand{\tor}[4]{\text{Tor}^{#1}_{#2} ( {#3}, {#4} )}
\newcommand{\Inj}[1]{\text{Inj-}{#1}}
\newcommand{\lsub}[2]{\text{Loc}_{#1} \left( {#2} \right)}
\newcommand{\lsubinj}[1]{\lsub{#1}{\Inj{#1}}}
\newcommand{\Proj}[1]{\text{Proj-}{#1}}
\newcommand{\cosub}[2]{\text{Coloc}_{#1} \left( {#2} \right)}
\newcommand{\cosubproj}[1]{\cosub{#1}{\Proj{#1}}}
\newcommand{\ds}[3]{\bigoplus_{{#1} \in {#2}}{#3}}
\newcommand{\tri}[6]{{#1} \xrightarrow{#4} {#2} \xrightarrow{#5} {#3} \xrightarrow{#6} {#1}[1]}
\newcommand{\ses}[5]{0 \rightarrow {#1} \xrightarrow{#4} {#2} \xrightarrow{#5} {#3} \rightarrow 0}
\newcommand{\dual}[1]{{D#1}}
\newcommand{\res}[3]{\ifthenelse{\equal{#3}{}}{\text{Res}_{#1}^{#2}}{\text{Res}_{#1}^{#2}{\left( {#3} \right)}}}
\newcommand{\ind}[3]{\ifthenelse{\equal{#3}{}}{\text{Ind}_{#1}^{#2}}{\text{Ind}_{#1}^{#2}{\left( {#3} \right)}}}
\newcommand{\lind}[3]{\ifthenelse{\equal{#3}{}}{\text{\textbf{L}Ind}_{#1}^{#2}}{\text{\textbf{L}Ind}_{#1}^{#2}{\left( {#3} \right)}}}
\newcommand{\coind}[3]{\ifthenelse{\equal{#3}{}}{\text{Coind}_{#1}^{#2}}{\text{Coind}_{#1}^{#2}{\left( {#3} \right)}}}
\newcommand{\rcoind}[3]{\ifthenelse{\equal{#3}{}}{\text{\textbf{R}Coind}_{#1}^{#2}}{\text{\textbf{R}Coind}_{#1}^{#2}{\left( {#3} \right)}}}
\newcommand{\cogenerator}[1]{{#1}^{\ast}}
\newcommand{\triext}[2]{{#1} \ltimes {#2}}
\newcommand{\twobytwo}[4]{\left( \begin{matrix} {#1} & {#2} \\ {#3} & {#4} \end{matrix} \right)}
\newcommand{\integers}[0]{\mathbb{Z}}
\newcommand{\rationals}[0]{\mathbb{Q}}
\newcommand{\projdim}[2]{\text{proj.dim}_{#1} \left({#2} \right)}
\newcommand{\injdim}[2]{\text{inj.dim}_{#1} \left({#2} \right)}
\newcommand{\relprojdim}[2]{\text{pgl}_{#1} \left({#2} \right)}
\newcommand{\relinjdim}[2]{\text{igl}_{#1} \left({#2} \right)}
\newcommand{\quotient}[3]{{#1}/{#2}^{#3}}
\newcommand{\function}[3]{#1 \colon #2 \rightarrow #3}
\newcommand{\goodtrunabove}[2]{\tau_{\leq {#1}}({#2})}
\newcommand{\cohomology}[2]{H^{#1}\left( {#2} \right) }
\newcommand{\supremum}[1]{\text{sup}\{ #1 \}}
\newcommand{\cx}[3]{\left( {#1}^{#3}, {#2}^{#3} \right)_{{#3} \in \integers} } 
\newcommand{\rad}[1]{\text{rad} \left( {#1} \right)}
\def\ps@pprintTitle{ 
 \let\@oddhead\@empty
 \let\@evenhead\@empty
 \def\@oddfoot{\centerline{\thepage}}
 \let\@evenfoot\@oddfoot}
\begin{document}

\begin{frontmatter}

\title{\textbf{Ring Constructions and Generation of the Unbounded Derived Module Category}}

\author{Charley Cummings}

\begin{abstract}
We consider the smallest triangulated subcategory of the unbounded derived module category of a ring that contains the injective modules and is closed under set indexed coproducts. If this subcategory is the entire derived category, then we say that injectives generate for the ring. In particular, we ask whether, if injectives generate for a collection of rings, do injectives generate for related ring constructions and vice versa. We provide sufficient conditions for this statement to hold for various constructions including recollements, ring extensions and module category equivalences.
\end{abstract}

\begin{keyword}
Derived categories \sep Homological conjectures \sep Recollements
\end{keyword}

\end{frontmatter}

\setcounter{tocdepth}{1}
\tableofcontents

\vspace{0.25cm}
\section{Introduction}

For several decades the derived category has been used as a tool to understand problems in representation theory. One longstanding problem that has fuelled a lot of research in this area is the finitistic dimension conjecture. Huisgen-Zimmermann gives a summary on the history of this conjecture in \cite{Huisgen1995}. Recently, Rickard \cite{Rickard2019} showed that a finite dimensional algebra satisfies the finitistic dimension conjecture if its derived module category is generated, as a triangulated category with coproducts, by the injective modules. In this paper we provide techniques to detect if this generation property holds for rings obtained from various ring constructions.

In a talk in 2001, Keller \cite{Keller2001} considered the smallest triangulated subcategory of the unbounded derived module category of a ring that contains the injective modules and is closed under set indexed coproducts. If this subcategory is the entire unbounded derived module category then we say that \textit{injectives generate} for the ring. In particular, Keller noted that if injectives generate for an algebra then the algebra satisfies some open conjectures in representation theory, including the Nunke condition and the Generalised Nakayama conjecture. Happel gives an overview of these conjectures in \cite{Happel1991}. Keller asked whether `injectives generate' also implied the finitistic dimension conjecture, to which Rickard \cite[Theorem 4.3]{Rickard2019} provided an affirmative answer.

It is not the case that injectives generate for all rings; one counterexample is the polynomial ring in infinitely many variables \cite[Theorem 3.5]{Rickard2019}. However, there is no known example of a finite dimensional algebra over a field for which injectives do not generate. There are many classes of finite dimensional algebras for which injectives do generate, including commutative algebras, Gorenstein algebras and monomial algebras \cite[Theorem 8.1]{Rickard2019}. Note that if injectives generate for a class of finite dimensional algebras then the finitistic dimension conjecture also holds for these algebras.

There is extensive work into finding methods that can be used to identify rings that satisfy particular homological properties. One such approach is to consider a collection of related rings and ask whether, if some of the rings have a property, do the others as well. For example, this method has been applied to the finitistic dimension conjecture with recollements of derived categories \cite{Happel1993, Chen2017}, ring homomorphisms \cite{Xi2004,Xi2006,Xi2008,Wang2017}, and operations defined on the quiver of a quiver algebra \cite{Fuller1992, Green2018, Bravo2020}. We apply the same philosophy to the property `injectives generate'. This leads us to employ reduction techniques originally used in calculating the finitistic dimension of a ring to check if injectives generate for a ring, including the arrow removal for quiver algebras defined by Green, Psaroudakis and Solberg \cite[Section 4]{Green2018}. 

One of the most general ring constructions is given by two rings $A$ and $B$ and a ring homomorphism $\function{f}{B}{A}$ between them. We provide sufficient conditions on a ring homomorphism such that if injectives generate for $A$ then injectives generate for $B$ and vice versa. The conditions we supply are satisfied by many familiar ring constructions, including those shown in Theorem~\ref{thm:introringhom}.

\begin{thm} \label{thm:introringhom}
    Let $A$ and $B$ be rings. Suppose that injectives generate for $B$. If one of the following holds then injectives generate for $A$.
    \begin{enumerate}[label = (\roman*)]
        \item $A$ is the trivial extension $\triext{B}{M}{}$ for a $(B,B)$-bimodule $M$ such that the ideal $(0,M) \leq A$ has finite flat dimension as a left $A$-module. [Lemma~\ref{lemma:nilpotentidealig}, Example~\ref{eg:trivialextension}]
        \item $A$ is a Frobenius extension of $B$. [Lemma~\ref{lem:ring_ext_relatively_proj_inj_generation}, Example~\ref{eg:relativelyinjectivefrobeniusandalmostexcellentextensions}]
        \item $A$ is an almost excellent extension of $B$. [Lemma~\ref{lem:ring_ext_relatively_proj_inj_generation}, Example~\ref{eg:relativelyinjectivefrobeniusandalmostexcellentextensions}]
    \end{enumerate}
\end{thm}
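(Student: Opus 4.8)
The plan is to obtain each of (i)--(iii) from the general change-of-rings results established later in the paper, so what follows is really a sketch of those. In every case the target is the same: to show that injectives generate for $A$ it suffices to show that the free module $A$ lies in $\lsubinj{A}$, for then $\lsubinj{A}$, being a localizing subcategory containing the compact generator $A$ of $\der{A}$, is all of $\der{A}$. The uniform device is a coproduct-preserving triangulated functor $G \colon \der{B} \to \der{A}$ with $G(\Inj{B}) \subseteq \lsubinj{A}$: for such a $G$ the full subcategory $\{ X \in \der{B} : G(X) \in \lsubinj{A}\}$ is localizing and contains $\Inj{B}$, hence equals $\der{B}$ as soon as injectives generate for $B$, so that $G(\der{B}) \subseteq \lsubinj{A}$. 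It then remains only to realise $A$ as an object of the localizing subcategory of $\der{A}$ generated by the image of $G$ --- in practice, as a direct summand in $\der{A}$ of $G(Y)$ for a suitably chosen $Y \in \der{B}$.

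For clause (i) I would invoke Lemma~\ref{lemma:nilpotentidealig} for the ideal $I = (0,M)$ of $A = \triext{B}{M}{}$, which satisfies $I^2 = 0$ and $A/I \cong B$ (Example~\ref{eg:trivialextension}), and take $G$ to be restriction $\der{B} \to \der{A}$ along the projection $A \twoheadrightarrow B$, which always preserves coproducts. The hypothesis that $I$ --- equivalently $A/I$, of flat dimension, say, $d$ as a left $A$-module --- has finite flat dimension over $A$ is what yields $G(\Inj{B}) \subseteq \lsubinj{A}$: for $E$ an injective $B$-module and $X$ any $A$-module, the adjunction isomorphism $\rHom{A}{X}{G(E)} \simeq \rHom{B}{\ltens{X}{A}{A/I}}{E}$ together with the exactness of $\Homgroup{B}{-}{E}$ identifies $\ext{n}{A}{X}{G(E)}$ with $\Homgroup{B}{\cohomology{-n}{\ltens{X}{A}{A/I}}}{E}$, which vanishes for $n > d$; hence $G(E)$ has injective dimension at most $d$ over $A$ and so lies in the thick subcategory generated by $\Inj{A}$, a fortiori in $\lsubinj{A}$. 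With injectives generating for $B$ this makes $G$ carry every $B$-module into $\lsubinj{A}$; and because $I$ is nilpotent, the $I$-adic filtration presents $A$, through finitely many triangles, in terms of $(A/I)$-modules, whence $A \in \lsubinj{A}$.

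For clauses (ii) and (iii) I would invoke Lemma~\ref{lem:ring_ext_relatively_proj_inj_generation} and take $G$ to be derived coinduction $\rHom{B}{A}{-}$. Since $A$ is finitely generated projective over $B$ --- part of the definition of both a Frobenius and an almost excellent extension --- the functor $\Homgroup{B}{A}{-}$ is exact, commutes with set-indexed coproducts, and, being the right adjoint of the exact restriction functor $\der{A} \to \der{B}$, sends $\Inj{B}$ into $\Inj{A} \subseteq \lsubinj{A}$; so $G(\der{B}) \subseteq \lsubinj{A}$ whenever injectives generate for $B$. It remains to locate $A$ inside $G(\der{B})$. For a Frobenius extension the defining bimodule isomorphism $\Homgroup{B}{A}{B} \cong A$ (possibly up to a twist by a ring automorphism, which is harmless since the twist of $A$ is again a projective generator) gives $G(B) \cong A$, so $A \in \lsubinj{A}$ at once. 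For an almost excellent extension one uses instead that the extension is separable --- equivalently, that the restriction functor is a separable functor --- so that the unit $\mathrm{id}_{\der{A}} \to G \circ \text{Res}$ of the restriction--coinduction adjunction admits a natural retraction; then $A$ is a direct summand in $\der{A}$ of $G(\text{Res}(A)) \in \lsubinj{A}$. Example~\ref{eg:relativelyinjectivefrobeniusandalmostexcellentextensions} is precisely the verification of these hypotheses.

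The routine parts --- coproduct-preservation, exactness, and the adjunction bookkeeping --- fall out once the functors are in place; the substance sits in the two \emph{Examples}. In clause (i) it is the passage from finite flat dimension of $I$ to the injective-dimension estimate and then the d\'evissage along the $I$-adic filtration. In clauses (ii)--(iii) it is the identification of $A$ inside the image of coinduction: immediate from the Frobenius bimodule isomorphism, but, for an almost excellent extension, a passage through the separable-functor characterisation of the extension together with a check that the resulting module-level retraction descends to $\der{A}$ (it does, since restriction and coinduction are both exact here). I would expect this last point to be where the only genuine care is needed.
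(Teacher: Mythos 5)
Your proposal is correct and, in substance, reconstructs the paper's own route: clause (i) is exactly Lemma~\ref{lemma:nilpotentidealig} applied as in Example~\ref{eg:trivialextension} (restriction along $A \twoheadrightarrow A/I \cong B$ preserves coproducts and, because ${_A(A/I)}$ has finite flat dimension, bounded complexes of injectives; then d\'evissage along the finite filtration $A \supseteq I \supseteq I^2 \supseteq \cdots$ puts $A$ into $\lsubinj{A}$), and clauses (ii)--(iii) use coinduction exactly as in Lemma~\ref{lem:ring_ext_relatively_proj_inj_generation}. Two deviations are worth recording. For Frobenius extensions you identify $\coind{B}{A}{B} \cong A_A$ directly from the defining bimodule isomorphism $\Homgroup{B}{A_B}{B_B} \cong {_BA_A}$; this is a clean shortcut, but it is special to the Frobenius case, whereas the paper's uniform argument (projective $A$-modules are relatively $B$-injective, hence direct summands of $\coind{B}{A}{} \circ \res{B}{A}{}$ applied to themselves) handles (ii) and (iii) by the same mechanism. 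For almost excellent extensions your appeal to separability of the extension is both stronger than what the definition provides and more than what is needed: right $B$-projectivity (part of the definition) says precisely that every $(A,B)$-split short exact sequence of $A$-modules splits over $A$, so applying it to the unit $A \to \coind{B}{A}{}\left(\res{B}{A}{A}\right)$ --- which is $B$-split via evaluation at $1$ --- already exhibits $A_A$ as a direct summand of an object of $\lsubinj{A}$ (this is the content of the citation of Kadison and of \cite[Lemma 1.1]{Xue1996} in Example~\ref{eg:relativelyinjectivefrobeniusandalmostexcellentextensions}); and the point you flag as delicate, whether the module-level retraction descends to $\der{A}$, is immediate because a split monomorphism of modules remains split in $\der{A}$ and localising subcategories are closed under direct summands.
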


In particular, consider the triangular matrix ring of two rings $B$ and $C$ and a $(C,B)$-bimodule $M$, denoted by
\begin{equation*}
    A := \twobytwo{C}{_CM_B}{0}{B}.
\end{equation*}
The ring $A$ is isomorphic to the trivial extension $\triext{\left(C \times B\right)}{M}{}$. Hence, if the nilpotent ideal
\begin{equation*}
    \twobytwo{0}{_CM_B}{0}{0} \leq A,
\end{equation*}
has finite flat dimension as a left $A$-module and injectives generate for $C \times B$, then Theorem~\ref{thm:introringhom} applies and injectives generate for $A$.

The triangular matrix ring also induces a recollement of derived module categories. A recollement is a diagram of six functors between three derived module categories emulating a short exact sequence of rings introduced by Be\u{\i}linson, Bernstein and Deligne \cite{Beuilinson1982}. We consider recollements of unbounded derived module categories that restrict to recollements of bounded (above or below) derived module categories. This requires all six functors to restrict to functors of bounded (respectively above or below) derived categories. Angeleri H\"{u}gel, Koenig, Liu and Yang provide necessary and sufficient conditions for a recollement to restrict to a bounded (above) recollement \cite[Proposition 4.8 and Proposition 4.11]{Huegel2017}. In Proposition~\ref{prop:recollementbdedbelowconditions} we prove an analogous result to characterise when a recollement restricts to a bounded below recollement. These characterisations can be used to prove the following theorem.

\begin{thm} \label{thm:introrecollement}
    Let $(R) = (\der{B}, \der{A}, \der{C})$ be a recollement of unbounded derived module categories with $A$ a finite dimensional algebra over a field.
    Suppose that injectives generate for both $B$ and $C$. If one of the following conditions holds, then injectives generate for $A$.
        \begin{enumerate}[label = (\roman*)]
            \item The recollement $(R)$ restricts to a recollement of bounded below derived categories. [Proposition~\ref{prop:recollementbdedbelow}]
            \item The recollement $(R)$ restricts to a recollement of bounded above derived categories. [Proposition~\ref{prop:recollementsbdedabove}]
        \end{enumerate}
\end{thm}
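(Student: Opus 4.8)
The plan is to show that every object of $\der{A}$ lies in $\lsubinj{A}$; equivalently, since $A$ compactly generates $\der{A}$, that $A \in \lsubinj{A}$. Fix $X \in \der{A}$ and consider the recollement triangle
\[ j_! j^* X \to X \to i_* i^* X \to (j_! j^* X)[1]. \]
Since $\lsubinj{A}$ is a triangulated subcategory of $\der{A}$, it is enough to prove $i_* i^* X \in \lsubinj{A}$ and $j_! j^* X \in \lsubinj{A}$. By hypothesis injectives generate for $B$, so $i^* X \in \der{B} = \lsubinj{B}$; as $i_*$ is triangulated and preserves set-indexed coproducts (it has the right adjoint $i^!$), it carries $\lsub{B}{\Inj{B}}$ into $\lsub{A}{i_*(\Inj{B})}$, whence $i_* i^* X \in \lsub{A}{i_*(\Inj{B})}$. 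Symmetrically $j^* X \in \der{C} = \lsubinj{C}$ and $j_!$ preserves coproducts (its right adjoint is $j^! = j^*$), so $j_! j^* X \in \lsub{A}{j_!(\Inj{C})}$. Thus the theorem reduces to a single claim: $i_* I \in \lsubinj{A}$ for every injective $B$-module $I$, and $j_! J \in \lsubinj{A}$ for every injective $C$-module $J$.

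For part (i) I would first recall the standard fact that $\derbdedbelow{A} \subseteq \lsubinj{A}$: any object of $\derbdedbelow{A}$ is isomorphic to a bounded below complex of injectives, which is the homotopy colimit of its brutal truncations, each a finite complex of injectives and hence an object of $\lsubinj{A}$. When the recollement restricts to a recollement of bounded below derived categories, $i_*$ restricts to a functor $\derbdedbelow{B} \to \derbdedbelow{A}$ and $j_!$ to a functor $\derbdedbelow{C} \to \derbdedbelow{A}$; since any module, regarded as a complex concentrated in degree zero, lies in the bounded below derived category, we get $i_* I, j_! J \in \derbdedbelow{A} \subseteq \lsubinj{A}$, and the reduction above finishes the proof (Proposition~\ref{prop:recollementbdedbelow}). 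Note that this argument makes no use of $A$ being finite dimensional; that hypothesis is needed only for part (ii).

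For part (ii) the preceding argument does not adapt: a bounded above recollement need not force $i_*$ or $j_!$ to preserve bounded below objects, and the dual strategy --- resolving by projectives and passing to homotopy limits --- only produces objects of the colocalizing subcategory $\cosubproj{A}$ rather than of $\lsubinj{A}$. Instead I would combine the characterisation of bounded above recollements of Angeleri H\"{u}gel, Koenig, Liu and Yang \cite{Huegel2017} (invoked through Proposition~\ref{prop:recollementsbdedabove}) with the finite dimensionality of $A$, again within the reduction of the first paragraph: the plan is to unwind the explicit form the six functors take under such a recollement and use that $A$ is Artinian --- so that injective $A$-modules, and the finiteness of homological invariants such as injective dimension, are detected on the finitely many simple $A$-modules --- in order to prove directly that $i_*(\Inj{B}) \subseteq \lsubinj{A}$ and $j_!(\Inj{C}) \subseteq \lsubinj{A}$, after which the first paragraph concludes.

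This last step is the main obstacle. Unlike in part (i) there is no formal reason for $i_* I$ or $j_! J$ to belong to $\lsubinj{A}$ --- in a general recollement $i_*$ need not carry an injective $B$-module to an object of finite injective dimension over $A$ --- so the proof must genuinely use both the restriction hypothesis and the finite dimensionality of $A$, not merely the abstract recollement identities. I expect most of the work to be in extracting the correct finiteness statement from the bounded above characterisation and verifying that it is exactly what the two generating classes $i_*(\Inj{B})$ and $j_!(\Inj{C})$ require; the reductions of the first two paragraphs are then routine.
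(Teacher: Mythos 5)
Your treatment of part (i) is correct, and it is in fact a slightly more direct route than the paper's: you use that a bounded below recollement forces $i_*I$ and $j_!J$ to be bounded below in cohomology, hence in $\lsubinj{A}$ via brutal truncations and a homotopy colimit, whereas the paper instead deduces from the restriction hypothesis that $i_*$ preserves bounded complexes of injectives (Proposition~\ref{prop:recollementbdedbelowconditions}) and then handles $j_!$ indirectly. You are also right that finite dimensionality of $A$ is not needed for (i).

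Part (ii), however, is not proved: you correctly identify the claim $i_*(\Inj{B}),\, j_!(\Inj{C}) \subseteq \lsubinj{A}$ as the crux and then only offer a plan ("unwind the explicit form of the six functors", "detect injectivity on the simple modules") without an argument. The paper's actual mechanism is different and entirely formal, and it is the idea you are missing. First, the finite dimensionality of $A$ is used only through \cite[Proposition 4.11]{Huegel2017}: a bounded above restriction is equivalent to $i_*$ preserving bounded complexes of projectives, and for a finite dimensional algebra this is further equivalent to $i_*$ preserving compact objects, which by \cite[Proposition 3.2]{Huegel2017} means the recollement extends one step downwards to a ladder of height $2$. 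In the lower recollement of that ladder the outer terms $B$ and $C$ swap and the embedding functor is the old $j_*$, which \emph{always} preserves bounded complexes of injectives; so Proposition~\ref{prop:imageoffunctor} gives $\im{j_*} \subseteq \lsubinj{A}$. Second, the class $j_!(\Inj{C})$ (or, in the lower recollement, $i_*(\Inj{B})$) is not handled directly but by the triangle
\begin{equation*}
    \tri{j_!j^*(j_*(I))}{j_*(I)}{i_*i^*(j_*(I))}{}{}{},
\end{equation*}
together with $j_!j^*j_* \cong j_!$: once the middle and right terms are known to lie in $\lsubinj{A}$, so does $j_!(I)$ (Proposition~\ref{prop:recollementimi*}). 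Note also that the paper's final decomposition in the bounded above case is via the second canonical triangle $i_*i^!X \to X \to j_*j^*X$ rather than the first one you fixed at the outset; insisting on the first triangle is not fatal, but the two classes you would then need are only obtained \emph{after} running the ladder argument above, so the reduction alone does not close the gap.
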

The recollement induced by a triangular matrix algebra restricts to a recollement of bounded above derived categories so Theorem~\ref{thm:introrecollement} can be applied to triangular matrix algebras. Note that the techniques used in the proof of Theorem~\ref{thm:introrecollement} can also be used to prove similar results about recollements of dg algebras. In particular, we apply these techniques to the `vertex removal' operation defined by Green, Psaroudakis and Solberg \cite[Section 5]{Green2018} with respect to injective generation.

\subsection*{Layout of the paper}

The paper starts in Section~\ref{section:localisingsubcategory} by recalling definitions and useful properties of localising and colocalising subcategories. Section~\ref{section:tensorproductalgebra} provides a straightforward example of the techniques used to prove `injectives generate' statements by considering the tensor product algebra over a field. In Section~\ref{section:equivalences} we show that separable equivalence preserves the property `injectives generate'. Section~\ref{section:ringhomomorphisms} considers general ring homomorphisms and includes the proof of Theorem~\ref{thm:introringhom}. The paper concludes with various `injectives generate' results for recollements of derived categories in Section~\ref{section:recollements} including Theorem~\ref{thm:introrecollement} and recollements of module categories in Section~\ref{section:dgrecollements}.

\subsection*{Acknowledgement}

I would like to thank my Ph.D. supervisor Jeremy Rickard for his guidance and many useful discussions.

\section{Preliminaries} \label{section:localisingsubcategory}

In this section we provide the required definitions and preliminary results that will be used throughout the paper. Subsection~\ref{subsection:(co)localisingsubcategories} recalls the definition of localising and colocalising subcategories of the derived module category. In Subsection~\ref{subsection:adjointfunctors} we focus on showing when triangle functors of the derived category preserve specific properties of complexes.

\subsection{Notation} \label{subsection:notation}

Firstly, we fix some notation. All rings and ring homomorphisms are unital and modules are right modules unless otherwise stated. For a ring $A$ and left $A$-module $M$ we denote $\cogenerator{M}$ to be the right $A$-module $\Hom{\mathbb{Z}}{}{_AM}{}{\mathbb{Q}/\mathbb{Z}}$. 

We use the following notation for various categories.
\begin{itemize}
    \item $\Mod{A}$ is the category of all $A$-modules.
    \item $\Inj{A}$ is the category of all injective $A$-modules and $\Proj{A}$ is the category of all projective $A$-modules.
    \item $\homotopy{A}$ is the unbounded homotopy category of cochain complexes of $A$-modules.
    \item $\der{A}$ is the unbounded derived category of cochain complexes of $A$-modules. $\derstar{A}$ with $\ast \in \{-, +, b\}$ is the bounded above, bounded below and bounded derived category respectively.
\end{itemize}

We shall be considering functors between derived categories and properties that they preserve. For brevity, we shall often abuse terminology by writing, for example, that a functor preserves bounded complexes of projectives when we mean that it preserves the property of being quasi-isomorphic to a bounded complex of projectives.

Let $A$ and $B$ be rings and $\function{f}{B}{A}$ be a ring homomorphism. The functors induced by $f$ will be denoted as follows.
\begin{itemize}
    \item Induction, $\ind{B}{A}{} \coloneqq \function{\tens{-}{B}{A_A}}{\Mod{B}}{\Mod{A}}$,
    \item Restriction, $\res{B}{A}{} \coloneqq \function{\Hom{A}{B}{A}{}{-}}{\Mod{A}}{\Mod{B}}$,
    \item Coinduction, $\coind{B}{A}{} \coloneqq \function{\Hom{B}{}{_AA}{}{-}}{\Mod{B}}{\Mod{A}}$.
\end{itemize}

\subsection{(Co)Localising subcategories} \label{subsection:(co)localisingsubcategories}

There are many ways to generate the unbounded derived category of a ring, here we focus on generation via localising and colocalising subcategories. First we recall their definitions.

\begin{defn}[(Co)Localising Subcategory]
    Let $A$ be a ring and $\cat{S}$ be a class of complexes in $\der{A}$. 
    \begin{itemize}
        \item A localising subcategory of $\der{A}$ is a triangulated subcategory of $\der{A}$ closed under set indexed coproducts. The smallest localising subcategory of $\der{A}$ containing $\cat{S}$ is denoted by $\lsub{A}{\cat{S}}$.
        \item A colocalising subcategory of $\der{A}$ is a triangulated subcategory of $\der{A}$ closed under set indexed products. The smallest colocalising subcategory of $\der{A}$ containing $\cat{S}$ is denoted by $\cosub{A}{\cat{S}}$.
    \end{itemize}
\end{defn}

There are some well known properties of localising and colocalising subcategories which can be found in
\cite[Proposition 2.1]{Rickard2019}.

\begin{lemma}\cite[Proposition 2.1]{Rickard2019}
    Let $A$ be a ring and $\cat{C}$ be a triangulated subcategory of $\der{A}$.
    \begin{enumerate}[label = (\roman*)]
        \item If $\cat{C}$ is either a localising subcategory or a colocalising subcategory of $\der{A}$ then $\cat{C}$ is closed under direct summands.
        \item Let $X = \cx{X}{d}{i} \in \der{A}$ be bounded. If the module $X^i$ is in $\cat{C}$ for all $i \in \integers$, then $X$ is in $\cat{C}$.
    \end{enumerate}
\end{lemma}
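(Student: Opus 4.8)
I would prove the two parts separately: part (i) by an Eilenberg swindle (the Bökstedt--Neeman idempotent completeness trick), and part (ii) by induction on the number of nonzero terms via brutal truncation.

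For part (i), suppose first that $\cat{C}$ is a localising subcategory, that $X \in \cat{C}$, and that $X \cong Y \oplus Z$ in $\der{A}$; the goal is $Y \in \cat{C}$. Since $\cat{C}$ is closed under set indexed coproducts, $W \coloneqq \ds{n}{\mathbb{N}}{X}$ lies in $\cat{C}$. Regrouping the countably many copies of $Y \oplus Z$ as $Y \oplus (Z \oplus Y) \oplus (Z \oplus Y) \oplus \cdots$ produces an isomorphism $W \cong Y \oplus W$, and the composite of this isomorphism with the projection gives a split epimorphism $\phi \colon W \to W$ fitting into a split distinguished triangle $Y \to W \xrightarrow{\phi} W \to Y[1]$. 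As $W \in \cat{C}$ and $\cat{C}$ is a triangulated subcategory, its cone $Y[1]$ lies in $\cat{C}$, hence so does $Y$. When $\cat{C}$ is colocalising the argument is dual: replace $\ds{n}{\mathbb{N}}{X}$ by $\product{n}{\mathbb{N}}{X}$, observe $W \cong Y \times W$ via a bijection of index sets, and use the split monomorphism $W \to Y \times W \cong W$ whose cofibre is $Y$.

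For part (ii), write $X = \cx{X}{d}{i}$ and pick $a \le b$ with $X^i = 0$ for $i \notin [a,b]$; I induct on $b - a$. If $b = a$ then $X \cong X^a[-a]$, which lies in $\cat{C}$ because $X^a \in \cat{C}$ and triangulated subcategories are closed under shifts. If $b > a$, the brutal truncations give a short exact sequence of complexes $\ses{\brutaltrunbelow{a+1}{X}}{X}{\brutaltrunabove{a}{X}}{}{}$, and hence a distinguished triangle in $\der{A}$. Here $\brutaltrunabove{a}{X} \cong X^a[-a] \in \cat{C}$, while $\brutaltrunbelow{a+1}{X}$ is concentrated in degrees $a+1, \dots, b$ and so lies in $\cat{C}$ by the inductive hypothesis; since $\cat{C}$ is a triangulated subcategory (two-out-of-three), $X \in \cat{C}$.

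The main obstacle is in part (i): one cannot simply say ``a direct summand of an object of $\cat{C}$ lies in $\cat{C}$'', since that is precisely the assertion being proved (equivalently, that $\cat{C}$ is idempotent complete), so the swindle is exactly the device that realises the summand $Y$ as a genuine vertex of a distinguished triangle whose other two vertices are already known to be in $\cat{C}$. The only slightly delicate point is to check that the regrouping isomorphism and the induced split triangle behave as claimed; everything else, including part (ii), is routine dévissage.
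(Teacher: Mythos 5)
Your proof is correct, and it follows the standard route: the paper itself states this lemma without proof, simply citing \cite[Proposition 2.1]{Rickard2019}, and the argument given there is exactly the one you use --- the B\"okstedt--Neeman Eilenberg swindle realising the summand $Y$ as the cone of the split map $W \to W$ for part (i), and d\'evissage via brutal truncations for part (ii). Both steps are carried out correctly (in particular the regrouping isomorphism $W \cong Y \oplus W$ and the identification $\brutaltrunabove{a}{X} \cong X^a[-a]$ are right), so nothing further is needed.
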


Throughout this paper we investigate when a localising subcategory or colocalising subcategory of $\der{A}$ generated by some class of complexes $\cat{S}$ is the entire unbounded derived module category. 

\begin{defn} \label{defn:sgeneratesda}
    Let $A$ be a ring and $\cat{S}$ be a class of complexes in $\der{A}$.
    \begin{itemize}
        \item If $\lsub{A}{\cat{S}} = \der{A}$ then we say that $\cat{S}$ generates $\der{A}$.
        \item If $\cosub{A}{\cat{S}} = \der{A}$ then we say that $\cat{S}$ cogenerates $\der{A}$.
    \end{itemize}
\end{defn}

It is well known that for a ring $A$, its unbounded derived category $\der{A}$ is generated by the projective $A$-modules and cogenerated by the injective $A$-modules, \cite[Proposition 2.2]{Rickard2019}. Since a localising subcategory is closed under set indexed coproducts and direct summands, it follows immediately that the regular module $A_A$ also generates $\der{A}$.

\begin{remark} \label{remark:(co)generatorsprove(pc)ig}
    Let $A$ be a ring and $M_A$ a generator of $\Mod{A}$ (in the sense that every $A$-module is a quotient of a set indexed coproduct of copies of $M$). Recall that since $M_A$ is a generator every projective $A$-module is a direct summand of a set indexed coproduct of copies of $M_A$. Thus $M_A$ generates $\der{A}$.
    
    Similarly, if $M_A$ is a cogenerator of $\Mod{A}$ (in the sense that every $A$-module is a submodule of a set indexed product of copies of $M_A$). Then every injective $A$-module is a direct summand of a set indexed product of copies of $M_A$, and $M_A$ cogenerates $\der{A}$. In particular, the injective cogenerator $\cogenerator{A} = \Homgroup{\integers}{{_AA}}{\quotient{\rationals}{\integers}{}}$ cogenerates $\der{A}$.
\end{remark}

In this paper we investigate when the derived category is generated as a localising subcategory by the injective modules and as a colocalising subcategory by the projective modules.

\begin{defn}
    Let $A$ be a ring.
    \begin{enumerate}[label = (\roman*)]
        \item If $\lsubinj{A} = \der{A}$ then we say that injectives generate for $A$.
        \item If $\cosubproj{A} = \der{A}$ then we say that projectives cogenerate for $A$.
    \end{enumerate}
\end{defn}

\subsection{Functors}

Many of the results in this paper rely on using functors that preserve the properties defining localising and colocalising subcategories. Since the ideas are mentioned often, we collate them here. 

\begin{defn}[(Pre)image]
    Let $A$ and $B$ be rings and $\function{F}{\der{A}}{\der{B}}$ be a triangle functor. 
    \begin{itemize}
        \item Let $\cat{C}_B$ be a full triangulated subcategory of $\der{B}$. The preimage of $\cat{C}_B$ under $F$ is the smallest full subcategory of $\der{A}$ consisting of complexes $X$ such that $F(X)$ is in $\cat{C}_B$. (Note that this subcategory is a triangulated subcategory of $\der{A}$.)
        \item Let $\cat{C}_A$ be a full triangulated subcategory of $\der{A}$. The image of $F$ applied to $\cat{C}_A$ is the smallest full triangulated subcategory of $\der{B}$ containing the complexes $F(X)$ for all complexes $X$ in $\cat{C}_A$. Denote the image of $F$ by $\im{F}$.
    \end{itemize}
\end{defn}

\begin{lemma} \label{prop:lsub}
    Let $A$ and $B$ be rings and let $\function{F}{\der{A}}{\der{B}}$ be a triangle functor.
	\begin{enumerate}[label=(\roman*)]
	    \item If $F$ preserves set indexed coproducts then the preimage of a localising subcategory of $\der{B}$ is a localising subcategory of $\der{A}$.
	    \item If $F$ preserves set indexed products then the preimage of a colocalising subcategory of $\der{B}$ is a colocalising subcategory of $\der{A}$.
	\end{enumerate}
\end{lemma}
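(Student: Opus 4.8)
The plan is to verify directly that the preimage inherits the three defining closure properties of a localising subcategory — being a triangulated subcategory and being closed under set indexed coproducts — using that $F$ is a triangle functor and (for the coproduct part) that $F$ preserves set indexed coproducts. The preimage being a triangulated subcategory is already noted parenthetically in the definition, so the real content is the coproduct closure; part (ii) is entirely dual.

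First I would fix a localising subcategory $\cat{C}_B \subseteq \der{B}$ and let $\cat{P}$ denote its preimage under $F$, i.e. the full subcategory of $\der{A}$ on those $X$ with $F(X) \in \cat{C}_B$. That $\cat{P}$ is a triangulated subcategory of $\der{A}$ follows because $F$ is a triangle functor: it sends shifts to shifts and triangles to triangles, so if two of the three vertices of a triangle in $\der{A}$ lie in $\cat{P}$ then their images, together with the image of the third vertex, form a triangle in $\der{B}$ with two vertices in $\cat{C}_B$; since $\cat{C}_B$ is triangulated the third image lies in $\cat{C}_B$, hence the third vertex lies in $\cat{P}$. Closure under shifts is immediate from $F(X[1]) \cong F(X)[1]$.

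Next, for coproduct closure, take a set-indexed family $\{X_i\}_{i \in I}$ of objects of $\cat{P}$. Since $F$ preserves set indexed coproducts, there is an isomorphism $F\!\left(\ds{i}{I}{X_i}\right) \cong \ds{i}{I}{F(X_i)}$ in $\der{B}$. Each $F(X_i)$ lies in $\cat{C}_B$ by assumption, and $\cat{C}_B$ is closed under set indexed coproducts, so $\ds{i}{I}{F(X_i)} \in \cat{C}_B$; transporting along the isomorphism and using that $\cat{C}_B$ is closed under isomorphism (being a full subcategory defined by a property stable under isomorphism, or alternatively since it is closed under direct summands) gives $F\!\left(\ds{i}{I}{X_i}\right) \in \cat{C}_B$, i.e. $\ds{i}{I}{X_i} \in \cat{P}$. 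Together with the previous paragraph this shows $\cat{P}$ is a localising subcategory of $\der{A}$, proving (i). For (ii), replace coproducts by products throughout, localising by colocalising, and use that $F$ preserves set indexed products.

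There is no serious obstacle here: the statement is a formal consequence of the definitions, and the only point requiring any care is bookkeeping the compatibility of $F$ with the triangulated structure and with (co)products. The argument does not use anything about $A$, $B$, or $F$ beyond $F$ being a triangle functor with the relevant (co)continuity property.
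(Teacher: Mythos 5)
Your proposal is correct and takes the same approach as the paper, which simply states that the result follows immediately from the definitions of localising and colocalising subcategories; you have merely written out the routine verification (triangulated closure via $F$ being a triangle functor, coproduct closure via $F$ preserving coproducts) that the paper leaves implicit.
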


\begin{proof}
    The result follows immediately by applying the definitions of localising and colocalising subcategories.
\end{proof}

\begin{prop} \label{prop:imageoffunctor}
	Let $A$ and $B$ be rings and $\function{F}{\der{A}}{\der{B}}$ be a triangle functor. Let $\cat{S}$ and $\cat{T}$ be classes of complexes in $\der{A}$ and $\der{B}$ respectively.
	\begin{enumerate}[label=(\roman*)]
	    \item Suppose that $\cat{S}$ generates $\der{A}$. If $F$ preserves set indexed coproducts and $F(S)$ is in $\lsub{B}{\cat{T}}$ for all $S$ in $\cat{S}$, then $\im{F}$ is a subcategory of $\lsub{B}{\cat{T}}$.
	    \item Suppose that $\cat{S}$ cogenerates $\der{A}$. If $F$ preserves set indexed products and $F(S)$ is in $\cosub{B}{\cat{T}}$ for all $S$ in $\cat{S}$, then $\im{F}$ is a subcategory of $\cosub{B}{\cat{T}}$.
	\end{enumerate}
\end{prop}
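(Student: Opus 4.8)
The two parts are formally dual, so I would prove part (i) carefully and then remark that (ii) follows by the same argument with coproducts replaced by products, localising replaced by colocalising, and "generates" replaced by "cogenerates". For part (i), the plan is to consider the preimage of $\lsub{B}{\cat{T}}$ under $F$, that is, the full subcategory
\[
    \cat{P} \coloneqq \{ X \in \der{A} : F(X) \in \lsub{B}{\cat{T}} \}.
\]
Since $\lsub{B}{\cat{T}}$ is a localising subcategory of $\der{B}$ and $F$ preserves set indexed coproducts, Lemma~\ref{prop:lsub}(i) tells us that $\cat{P}$ is a localising subcategory of $\der{A}$. By hypothesis $F(S) \in \lsub{B}{\cat{T}}$ for every $S \in \cat{S}$, so $\cat{S} \subseteq \cat{P}$. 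Because $\cat{P}$ is localising and $\lsub{A}{\cat{S}}$ is the \emph{smallest} localising subcategory containing $\cat{S}$, we get $\lsub{A}{\cat{S}} \subseteq \cat{P}$. But $\cat{S}$ generates $\der{A}$, i.e. $\lsub{A}{\cat{S}} = \der{A}$, so in fact $\cat{P} = \der{A}$; equivalently, $F(X) \in \lsub{B}{\cat{T}}$ for \emph{every} $X \in \der{A}$.

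It remains to pass from this statement about individual objects to the statement about $\im{F}$. Recall that $\im{F}$ is defined as the smallest full triangulated subcategory of $\der{B}$ containing $F(X)$ for all $X \in \der{A}$. We have just shown every such $F(X)$ lies in $\lsub{B}{\cat{T}}$, and $\lsub{B}{\cat{T}}$ is in particular a full triangulated subcategory of $\der{B}$. Hence by minimality of $\im{F}$ we conclude $\im{F} \subseteq \lsub{B}{\cat{T}}$, as required.

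I do not anticipate a genuine obstacle here: the result is essentially a bookkeeping exercise combining Lemma~\ref{prop:lsub} with the universal (minimality) properties of $\lsub{A}{-}$, $\lsub{B}{-}$ and $\im{F}$. The only point that warrants a moment's care is the very first reduction — observing that "$F$ maps a generating class of $\der{A}$ into $\lsub{B}{\cat{T}}$" already forces "$F$ maps \emph{all} of $\der{A}$ into $\lsub{B}{\cat{T}}$" — and this is exactly where the coproduct-preservation hypothesis is used, via Lemma~\ref{prop:lsub}(i), to guarantee that the preimage $\cat{P}$ is itself localising rather than merely triangulated. For part (ii) the analogous role is played by Lemma~\ref{prop:lsub}(ii) and product-preservation.
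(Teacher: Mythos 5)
Your proof is correct and follows essentially the same route as the paper: pass to the preimage of $\lsub{B}{\cat{T}}$ under $F$, note it is localising by Lemma~\ref{prop:lsub}, observe it contains $\cat{S}$ and hence all of $\der{A}$, and conclude by minimality of $\im{F}$. The paper's proof is identical in substance, merely leaving the final minimality step implicit.
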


\begin{proof}
    \begin{enumerate}[label = (\roman*)]
        \item Suppose that $\function{F}{\der{A}}{\der{B}}$ preserves set indexed coproducts and that $F(S)$ is in $\lsub{B}{\cat{T}}$ for all $S$ in $\cat{S}$. By Lemma~\ref{prop:lsub}, the preimage of $\lsub{B}{\cat{T}}$ under $F$ is a localising subcategory. Furthermore, the preimage contains $\cat{S}$ so it also contains $\lsub{A}{\cat{S}} = \der{A}$. Thus $F(X)$ is in $\lsub{B}{\cat{T}}$ for all $X \in \der{A}$.
	    
	    \item The second statement follows similarly.
    \end{enumerate}
	
\end{proof}

\subsection{Adjoint functors} \label{subsection:adjointfunctors}

Adjoint pairs of functors are particularly rich in the various properties they preserve. These properties can be characterised using homomorphisms groups in the derived category. Some of these well known results can be found in \cite[Proof of Proposition 8.1]{Rickard1989} and \cite[Proof of Theorem 1]{Koenig1991}.
	
	\begin{remark} \label{remark:cohomologyindicators}
	    Let $A$ be a ring. Let $X \in \der{A}$ and $n \in \integers$. Recall that, since $A$ is a projective generator of $\Mod{A}$, $\dHom{A}{A}{X[n]} =0$ if and only if $\cohomology{n}{X}=0$. Similarly, since $\cogenerator{A}$ is an injective cogenerator of $\Mod{A}$, $\dHom{A}{X}{\cogenerator{A}[n]}=0$ if and only if $\cohomology{-n}{X}=0$.
	\end{remark}

	\begin{lemma} \label{lemma:homgroupcriteriabded}
    	Let $A$ be a ring. 
    	\begin{enumerate}[label = (\roman*)]
    	    \item A complex $X \in \der{A}$ is bounded in cohomology if and only if for all compact objects $C \in \der{A}$ we have that $\dHom{A}{C}{X[n]}=0$ for all but finitely many $n \in \integers$.
    	    
    	    \item A complex $I \in \der{A}$ is quasi-isomorphic to a bounded complex of injectives if and only if for all complexes $X \in \der{A}$ that are bounded in cohomology, $\dHom{A}{X}{I[n]}=0$ for all but finitely many $n \in \integers$.
    	    
    	    \item A complex $P \in \der{A}$ is quasi-isomorphic to a bounded complex of projectives if and only if for all complexes $X \in \der{A}$ that are bounded in cohomology, $\dHom{A}{P}{X[n]}=0$ for all but finitely many $n \in \integers$.
    	\end{enumerate}
	\end{lemma}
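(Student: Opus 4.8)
The three parts are related by duality and by the standard "bounded complexes are built from their cohomology" principle, so the plan is to prove (i) first and then bootstrap (ii) and (iii) from it using the injective/projective (co)generators $\cogenerator{A}$ and $A$. For part (i), the forward direction is easy: if $X$ has cohomology concentrated in degrees $[a,b]$, then $X$ is quasi-isomorphic to a complex concentrated in that range, and for a compact $C$ the functor $\dHom{A}{C}{-}$ commutes with the (good) truncations, so $\dHom{A}{C}{X[n]}$ can only be nonzero when $n$ meets that finite window; more carefully one uses that $C$ is built in finitely many steps from finite coproducts of shifts of $A$, and $\dHom{A}{A}{X[n]} = \cohomology{n}{X}$ by Remark~\ref{remark:cohomologyindicators}, so a d\'evissage along the triangles defining $C$ gives vanishing for $|n|$ large. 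For the converse in (i), apply the hypothesis to the compact object $C = A$: then $\dHom{A}{A}{X[n]} = \cohomology{n}{X} = 0$ for all but finitely many $n$, which is exactly the statement that $X$ is bounded in cohomology.

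For part (ii), the forward direction is again a truncation/d\'evissage argument: if $I$ is quasi-isomorphic to a bounded complex of injectives, say concentrated in degrees $[c,d]$, then for any $X$ one has $\dHom{A}{X}{I[n]} \cong \dHom{A}{X}{J^{\bullet}[n]}$ where $J^\bullet$ is that bounded injective complex; since each $J^i$ is injective, $\Homgroup{}{-}{J^i}$ is exact, and a spectral-sequence or iterated-triangle argument shows $\dHom{A}{X}{J^i[n]}$ depends only on $\cohomology{-n+\text{(shift)}}{X}$, which vanishes for $|n|$ large once $X$ is bounded in cohomology. Running this over the finitely many nonzero terms $J^i$ gives the claim. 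For the converse, the key idea is to feed in the specific bounded-in-cohomology test complexes built from $\cogenerator{A}$: by Remark~\ref{remark:cohomologyindicators}, $\dHom{A}{I}{\cogenerator{A}[n]} = 0$ iff $\cohomology{-n}{I} = 0$, but to control injective \emph{dimension} one wants more. The plan is to use the hypothesis with $X$ ranging over bounded complexes of the form $\cogenerator{A}$ and its shifts, together with the standard fact that $I$ has finite injective dimension (equivalently is quasi-isomorphic to a bounded complex of injectives) precisely when $\ext{n}{A}{M}{\cohomology{}{\text{truncations of }I}}$ vanish uniformly; concretely, one takes the truncation $\tau_{\leq m} I$ and its cohomology modules, uses that each such module is a bounded-in-cohomology complex, and extracts a uniform bound on $\dHom{A}{(\tau)}{I[n]} \neq 0$. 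Part (iii) is the formal dual of (ii), replacing injectives by projectives, $\cogenerator{A}$ by $A$, and reversing the roles of $X$ and the test object; alternatively one can deduce it by applying (ii) to the opposite ring via the duality $\cogenerator{(-)}$, noting that $\dHom{A}{P}{X[n]}$ vanishing is detected after applying $\Homgroup{\integers}{-}{\rationals/\integers}$.

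The main obstacle I expect is the converse direction of (ii) (and dually (iii)): passing from "$\dHom{A}{X}{I[n]}$ eventually vanishes for each fixed bounded-in-cohomology $X$" to "$I$ is \emph{globally} quasi-isomorphic to a bounded complex of injectives" requires a uniform bound, not just a bound for each test object separately. The clean way around this is to apply the hypothesis to a single cleverly chosen $X$ — for instance $X$ a bounded complex whose cohomology modules are the relevant modules appearing in a minimal injective resolution of the cohomology of $I$, or simply $X = \cogenerator{A}$ together with the observation that $\dHom{A}{X}{I[n]}$ for $X$ bounded in cohomology already forces boundedness of the injective resolution via the characterisation of injective dimension by $\mathrm{Ext}$ against a (co)generator. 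I would also need to be slightly careful that "bounded in cohomology" is the right finiteness hypothesis on $X$ (rather than "bounded complex"), and that all truncation functors used are the good truncations $\goodtrunabove{}{}$, $\goodtrunbelow{}{}$ so that they induce triangles in $\der{A}$; these are routine but worth stating explicitly.
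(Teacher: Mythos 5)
Your treatment of part (i) and of the forward implications in (ii) and (iii) is sound and agrees in substance with the paper's: the paper also obtains the ``if'' direction of (i) by taking $C = A$, and handles the remaining forward directions by replacing the complexes involved by bounded (respectively perfect) representatives and computing in $\homotopy{A}$, which is just a packaged form of your d\'evissage.

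The genuine gap is in the converse of (ii) (and dually of (iii)); you have correctly located the difficulty but neither of your proposed fixes closes it. Testing against $X = \cogenerator{A}$ is in the wrong variable: by Remark~\ref{remark:cohomologyindicators} it is $\dHom{A}{-}{\cogenerator{A}[n]}$ that detects cohomology, whereas the vanishing of $\dHom{A}{\cogenerator{A}}{I[n]}$ for large $n$ controls neither the cohomology nor the injective dimension of $I$. Your other candidate, a complex built from injective resolutions of the cohomology modules of $I$, is also not the right test object: the cohomology modules of a bounded complex of injectives need not have finite injective dimension, so no bound on $I$ can be extracted from them; the modules one must test against are the cosyzygies of $I$ itself. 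The paper's argument runs as follows. First take $X = A$ to see that $I$ is bounded in cohomology, hence admits a bounded below $\cat{K}$-injective resolution $(I^i, d^i)$ with $\cohomology{n}{I} = 0$ for all $n > N_0$. For such $n$ a chain map from a module $M$ into $I[n]$ is precisely a map $M \to \kernel{d^n}$, and it is null-homotopic precisely when it lifts through the epimorphism $\function{d^{n-1}}{I^{n-1}}{\kernel{d^n}}$; hence the vanishing of $\kHom{A}{\kernel{d^n}}{I[n]} \cong \dHom{A}{\kernel{d^n}}{I[n]}$ splits that epimorphism, shows $\kernel{d^n}$ is injective, and lets one replace $I$ by the good truncation $\goodtrunabove{n}{I}$, which is the desired bounded complex of injectives. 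The uniformity you worry about is then needed only for these cosyzygies, and is obtained by applying the hypothesis to the single bounded-in-cohomology object $X = \bigoplus_{m > N_0} \kernel{d^m}$ (a set-indexed coproduct of modules, hence a module): for $n$ large this forces $\dHom{A}{\kernel{d^n}}{I[n]} = 0$ as a direct factor. Without some version of this cosyzygy-splitting and truncation step your plan does not actually produce the bounded complex of injectives.
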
	
	
	\begin{proof}
	    \begin{enumerate}[label = (\roman*)]
	        \item Let $X \in \der{A}$. Suppose that for all compact objects, $C \in \der{A}$, we have that $\dHom{A}{C}{X[n]}$ is zero for all but finitely many $n \in \integers$. Since $A$ is compact, $\cohomology{n}{X} \cong \dHom{A}{A}{X[n]}$ is zero for all but finitely many $n \in \integers$.
	    
	        Now suppose that $X \in \der{A}$ is bounded in cohomology. Then there exists $Y \in \der{A}$ such that $Y$ is bounded and $X$ is quasi-isomorphic to $Y$. Let $C \in \der{A}$ be a compact object. Then $C$ is quasi-isomorphic to a bounded complex of finitely generated projectives $P \in \der{A}$. Hence, for all $n \in \integers$,
        	\begin{equation*}
        	   \dHom{A}{C}{X[n]} \cong \kHom{A}{P}{Y[n]}.
        	\end{equation*}
        	    
        	Since both $P$ and $Y$ are bounded, $\kHom{A}{P}{Y[n]} = 0$ for all but finitely many $n \in \integers$.
        	 
        	\item Suppose that $I \in \der{A}$ is quasi-isomorphic to a bounded complex of injectives $J \in \der{A}$. Let $X \in \der{A}$ be bounded in cohomology. Then there exists $Y \in \der{A}$ such that $Y$ is bounded and $X$ is quasi-isomorphic to $Y$. Thus
        	\begin{equation*}
        	    \dHom{A}{X}{I[n]} \cong \kHom{A}{Y}{J[n]},
        	\end{equation*}
        	for all $n \in \integers$. Moreover, both $Y$ and $J$ are bounded so $\kHom{A}{Y}{J[n]}=0$ for all but finitely many $n \in \integers$.
	    
	        Now let us consider the converse. Let $Z \in \der{A}$. Suppose that for all $X \in \der{A}$, such that $X$ is bounded in cohomology, we have that $\dHom{A}{X}{Z[n]}=0$ for all but finitely many $n \in \integers$. Then, taking $X=A$, we have that $\cohomology{n}{Z} \cong \dHom{A}{A}{Z[n]}=0$ for all but finitely many $n \in \integers$. Consequently, $Z$ has a $\cat{K}$-injective resolution $I = \cx{I}{d}{i}$ which is bounded below and has nonzero cohomology in only finitely many degrees. Hence there exists $N \in \integers$ such that for all $n > N$, $\cohomology{n}{I} = 0$ and for all $A$-modules $M$,
	        \begin{equation*}
	            0 = \dHom{A}{M}{Z[n]} \cong \kHom{A}{M}{I[n]}.
	        \end{equation*}
	        In particular,
	        \begin{equation*}
	            \kHom{A}{\kernel{d^n}}{I[n]} = 0,
	        \end{equation*}
	        for all $n > N$. Hence the epimorphism $\function{d^{n-1}}{I^{n-1}}{\im{d^{n-1}} \cong \kernel{d^n}}$ splits. Consequently, $\kernel{d^n}$ is an injective $A$-module and the good truncation
	        \begin{equation*}
	            \goodtrunabove{n}{I} = \dots \rightarrow 0 \rightarrow I^0 \rightarrow I^1 \rightarrow \dots \rightarrow I^{n-1} \rightarrow \kernel{d^n} \rightarrow 0 \rightarrow \dots,
	        \end{equation*}
	        is a bounded complex of injectives that is quasi-isomorphic to $I$.
	        
	        \item This follows similarly to part (ii).
	    \end{enumerate}
	\end{proof}
	
	\begin{lemma} \label{lemma:homgroupcriteriabdedaboveandbelow}
	    Let $A$ be a ring and let $X \in \der{A}$.
	    \begin{enumerate}[label = (\roman*)]
	        \item The following are equivalent:
        	    \begin{enumerate}[label = (\alph*)]
        	        \item $X$ is bounded above in cohomology,
        	        \item For all compact objects $C \in \der{A}$ there exists $N \in \integers$ such that $\dHom{A}{C}{X[n]}=0$ for all $n > N$,
        	        \item For all $Y \in \der{A}$ that are bounded in cohomology there exists $N \in \integers$ such that $\dHom{A}{X}{Y[n]}=0$ for all $n < N$.
        	    \end{enumerate}
            \item The following are equivalent:
                \begin{enumerate}
                    \item $X$ is bounded below in cohomology,
                    \item For all compact objects $C \in \der{A}$ there exists $N \in \integers$ such that $\dHom{A}{C}{X[n]}=0$ for all $n < N$,
                    \item For all $Y \in \der{A}$ that are bounded in cohomology there exists $N \in \integers$ such that $\dHom{A}{Y}{X[n]}=0$ for all $n < N$.
                \end{enumerate}
	    \end{enumerate}
	\end{lemma}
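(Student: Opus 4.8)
The plan is to imitate the proof of Lemma~\ref{lemma:homgroupcriteriabded}, replacing two-sided bounds by one-sided ones and carefully tracking the direction of each inequality. For part~(i) I would establish the four implications (a)$\Rightarrow$(b), (b)$\Rightarrow$(a), (a)$\Rightarrow$(c), (c)$\Rightarrow$(a); part~(ii) then follows by the same argument with ``bounded above'' and ``bounded below'' interchanged and with the compact generator $A$ and the injective cogenerator $\cogenerator{A}$ playing swapped roles. Note that the converse directions here are easier than the corresponding step of Lemma~\ref{lemma:homgroupcriteriabded}, because we only need to conclude one-sided cohomological boundedness, which is read off directly from the cohomology indicators of Remark~\ref{remark:cohomologyindicators}; no splitting argument is required.

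For (a)$\Rightarrow$(b): if $\cohomology{i}{X} = 0$ for all $i > b$, replace $X$ by a quasi-isomorphic complex $X'$ concentrated in degrees $\leq b$. A compact object $C$ is quasi-isomorphic to a bounded complex $P$ of finitely generated projectives, say with terms in degrees between $a'$ and $b'$; since $P$ is $\cat{K}$-projective, $\dHom{A}{C}{X[n]} \cong \kHom{A}{P}{X'[n]}$, and every chain map $P \to X'[n]$ vanishes in all degrees once $n > b - a'$, so (b) holds with $N := b - a'$. For (b)$\Rightarrow$(a): apply (b) with $C = A$ and use $\cohomology{n}{X} \cong \dHom{A}{A}{X[n]}$ from Remark~\ref{remark:cohomologyindicators}.

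For (a)$\Rightarrow$(c): keep $X'$ concentrated in degrees $\leq b$; a complex $Y$ bounded in cohomology is quasi-isomorphic to a bounded complex and hence has a bounded-below $\cat{K}$-injective resolution $I$, say with terms in degrees $\geq a$. Since $I$ is $\cat{K}$-injective, $\dHom{A}{X}{Y[n]} \cong \kHom{A}{X'}{I[n]}$, and a chain map $X' \to I[n]$ vanishes in all degrees as soon as $n < a - b$, so (c) holds with $N := a - b$. For (c)$\Rightarrow$(a): apply (c) with $Y = \cogenerator{A}$, a single module and hence bounded in cohomology, and invoke the equivalence $\dHom{A}{X}{\cogenerator{A}[n]} = 0 \iff \cohomology{-n}{X} = 0$ from Remark~\ref{remark:cohomologyindicators} to obtain $\cohomology{m}{X} = 0$ for all $m > -N$.

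I do not expect a genuine obstacle. One just has to resolve things on the correct side — in (b) the compact object is already a bounded complex of finitely generated projectives, and in (c) one takes a bounded-below $\cat{K}$-injective resolution of the complex occurring as the second argument — then replace the remaining argument by a one-sided bounded representative and keep every inequality pointing the right way. In part~(ii) both (b) and (c) assert vanishing for $n < N$, which is the one place where it is easy to slip. The background facts invoked are standard: a compact object of $\der{A}$ is quasi-isomorphic to a bounded complex of finitely generated projectives, and any bounded-below (in particular any cohomologically bounded) complex admits a bounded-below $\cat{K}$-injective resolution.
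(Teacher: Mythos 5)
Your proposal is correct and follows essentially the same route as the paper: the easy implications (b)$\Rightarrow$(a) and (c)$\Rightarrow$(a) by testing against $A$ and $\cogenerator{A}$ via Remark~\ref{remark:cohomologyindicators}, and the forward implications by replacing both arguments with suitably one-sided-bounded representatives and counting degrees, exactly as in the paper's adaptation of Lemma~\ref{lemma:homgroupcriteriabded}. The only cosmetic difference is that in (a)$\Rightarrow$(c) you place the $\cat{K}$-injective resolution on $Y$ and truncate $X$, whereas the paper takes a bounded above $\cat{K}$-projective resolution of $X$ and a bounded representative of $Y$; both yield the same vanishing range.
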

	
	\begin{proof}
	    \begin{enumerate}[label = (\roman*)]
	        \item The equivalence of (a) and (b) is similar to the proof of Lemma~\ref{lemma:homgroupcriteriabded}~(i).
	        
	        Now, we show that $(c)$ implies $(a)$. Suppose that for all $Y \in \der{A}$ that are bounded in cohomology there exists $N \in \integers$ such that $\dHom{A}{X}{Y[n]}=0$ for all $n < N$. Then, taking $Y = \cogenerator{A}$, we have that $\dHom{A}{X}{\cogenerator{A}[n]}=0$ for all $n < N$. Hence $\cohomology{-n}{X}=0$ for all $n < N$.
	        
	        Finally, we show that $(a)$ implies $(c)$. Suppose that $X \in \der{A}$ is bounded above in cohomology. Then $X$ has a bounded above $\cat{K}$-projective resolution $P \in \der{A}$. Let $Y \in \der{A}$ be bounded in cohomology. Then there exists $Z \in \der{A}$ such that $Z$ is bounded and $Y$ is quasi-isomorphic to $Z$. Thus
	        \begin{equation*}
	            \dHom{A}{X}{Y[n]} \cong \kHom{A}{P}{Z[n]},
	        \end{equation*}
	        for all $n \in \integers$. Since $P$ is bounded above and $Z$ is bounded there exists $N \in \integers$ such that $\kHom{A}{P}{Z[n]}=0$ for all $n < N$.
        	
        	\item This follows similarly to (i).
	    \end{enumerate}
	\end{proof}

	Since the properties considered in Remark~\ref{remark:cohomologyindicators}, Lemma~\ref{lemma:homgroupcriteriabded} and Lemma~\ref{lemma:homgroupcriteriabdedaboveandbelow} are defined using homomorphism groups they interact well with adjoint functors. Recall that for brevity, we shall often abuse terminology by writing, for example, that a functor preserves bounded complexes of projectives when we mean that it preserves the property of being quasi-isomorphic to a bounded complex of projectives.
	
	\begin{lemma} \label{lemma:adjointfunctorspreserve}
	    Let $A$ and $B$ be rings. Let $\function{F}{\der{A}}{\der{B}}$ and $\function{G}{\der{B}}{\der{A}}$ be triangle functors such that $(F,G)$ is an adjoint pair.
	    \begin{enumerate}[label=(\roman*)]
	        \item If $G$ preserves set indexed coproducts then $F$ preserves compact objects.
	        \item If $F$ preserves compact objects then $G$ preserves complexes bounded (above or below) in cohomology.
	        \item If $F$ preserves complexes bounded in cohomology then $G$ preserves bounded complexes of injectives and complexes bounded below in cohomology.
	        \item If $G$ preserves bounded complexes of injectives then $F$ preserves complexes bounded (above or below) in cohomology.
	        \item If $G$ preserves complexes bounded in cohomology then $F$ preserves bounded complexes of projectives and complexes bounded above in cohomology.
	        \item If $F$ preserves bounded complexes of projectives then $G$ preserves complexes bounded (above or below) in cohomology. 
	    \end{enumerate}
	\end{lemma}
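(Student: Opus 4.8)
The plan is to prove all six parts uniformly, using the adjunction isomorphism
\[
\dHom{B}{F(X)}{Y[n]} \;\cong\; \dHom{A}{X}{G(Y)[n]} \qquad (X \in \der{A},\ Y \in \der{B},\ n \in \integers),
\]
together with the fact that triangle functors commute with the shift, and the cohomological characterisations of the boundedness properties involved from Remark~\ref{remark:cohomologyindicators}, Lemma~\ref{lemma:homgroupcriteriabded} and Lemma~\ref{lemma:homgroupcriteriabdedaboveandbelow}. In each of (ii), (iii) and (vi) the hypothesis is a property of $F$ and the conclusion a property of $G$; the strategy is to detect the conclusion for $G(X)$ by testing it against a suitable complex $T$ in $\der{A}$, rewriting $\dHom{A}{T}{G(X)[n]} \cong \dHom{B}{F(T)}{X[n]}$, and using that $F(T)$ inherits the relevant good property of $T$. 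In each of (i), (iv) and (v) the implication runs the other way, so one instead tests $F(X)$ against a suitable complex $S$ in $\der{B}$, rewriting $\dHom{B}{F(X)}{S[n]} \cong \dHom{A}{X}{G(S)[n]}$.

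Part (i) is the standard argument: for $C \in \der{A}$ compact and a set-indexed family $(Y_i)_{i \in I}$ in $\der{B}$, adjunction, the hypothesis that $G$ preserves coproducts, and compactness of $C$ give
\[
\dHom{B}{F(C)}{\ds{i}{I}{Y_i}} \cong \dHom{A}{C}{\ds{i}{I}{G(Y_i)}} \cong \ds{i}{I}{\dHom{A}{C}{G(Y_i)}} \cong \ds{i}{I}{\dHom{B}{F(C)}{Y_i}},
\]
so $F(C)$ is compact. For (ii), take $X \in \der{B}$ bounded (above or below) in cohomology and $C \in \der{A}$ compact; then $\dHom{A}{C}{G(X)[n]} \cong \dHom{B}{F(C)}{X[n]}$, which vanishes for $n$ outside a finite interval (respectively for $n$ large, for $n$ small) because $F(C)$ is compact and $X$ has the stated property, by Lemma~\ref{lemma:homgroupcriteriabded}~(i) and Lemma~\ref{lemma:homgroupcriteriabdedaboveandbelow}; applying the same criterion inside $\der{A}$ shows $G(X)$ has the corresponding property. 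Part (iii) runs identically, testing $G(I)$ against complexes $T \in \der{A}$ bounded in cohomology, using that $F(T)$ is then bounded in cohomology, and invoking Lemma~\ref{lemma:homgroupcriteriabded}~(ii) for the ``bounded complex of injectives'' clause and Lemma~\ref{lemma:homgroupcriteriabdedaboveandbelow}~(ii) for the ``bounded below in cohomology'' clause. Part (vi) has the same shape but with the fixed test object $T = A_A$, a bounded complex of projectives: since $F$ preserves this property, $F(A)$ is quasi-isomorphic to a bounded complex of projectives $Q$, and $\cohomology{n}{G(X)} \cong \dHom{A}{A}{G(X)[n]} \cong \dHom{B}{Q}{X[n]}$ by Remark~\ref{remark:cohomologyindicators}, which one then checks vanishes in the appropriate range of $n$ when $X$ is bounded (above or below) in cohomology.

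In the other direction, (v) is handled by testing $F(P)$ against complexes $S \in \der{B}$ bounded in cohomology: $\dHom{B}{F(P)}{S[n]} \cong \dHom{A}{P}{G(S)[n]}$, and since $G(S)$ is bounded in cohomology by hypothesis, this vanishes for all but finitely many $n$ when $P$ is quasi-isomorphic to a bounded complex of projectives (Lemma~\ref{lemma:homgroupcriteriabded}~(iii)), and for $n$ small when $P$ is bounded above in cohomology (Lemma~\ref{lemma:homgroupcriteriabdedaboveandbelow}~(i)); applying the criterion to $F(P)$ in $\der{B}$ yields the conclusion. Part (iv) uses the fixed test object $S = \cogenerator{B}$, which is a bounded (degree-zero) complex of injectives; since $G$ preserves bounded complexes of injectives, $G(\cogenerator{B})$ is quasi-isomorphic to a bounded complex of injectives $J$, and $\cohomology{-n}{F(X)} \cong \dHom{B}{F(X)}{\cogenerator{B}[n]} \cong \dHom{A}{X}{J[n]}$ by Remark~\ref{remark:cohomologyindicators}; again one checks the right-hand side vanishes in the appropriate range of $n$ when $X$ is bounded (above or below) in cohomology.

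The only genuinely computational point, arising in (iv) and (vi), is the elementary claim that $\dHom{A}{X}{J[n]}$ vanishes for all but finitely many $n$ (respectively for $n$ outside an appropriate half-line) whenever $X$ is bounded (respectively bounded above or bounded below) in cohomology and $J$ is a bounded complex of injectives, together with its projective analogue for $\dHom{B}{Q}{X[n]}$. The two-sided cases are exactly the forward directions of Lemma~\ref{lemma:homgroupcriteriabded}~(ii) and (iii), and the one-sided cases follow by representing $X$ by a bounded-above or bounded-below complex, using that $J$ is $\cat{K}$-injective (respectively $Q$ is $\cat{K}$-projective), and counting degrees, just as in the proofs of Lemma~\ref{lemma:homgroupcriteriabded} and Lemma~\ref{lemma:homgroupcriteriabdedaboveandbelow}. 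I expect no real obstacle: the work is purely bookkeeping, namely tracking which variance of the Hom-vanishing criterion is needed in each part and in which of $\der{A}$, $\der{B}$ it is applied, so that the adjunction is used in the correct direction.
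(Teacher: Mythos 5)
Your proposal is correct and follows essentially the same route as the paper: part (i) by the standard compactness computation, and the remaining parts by combining the adjunction isomorphism with the Hom-vanishing characterisations of Remark~\ref{remark:cohomologyindicators}, Lemma~\ref{lemma:homgroupcriteriabded} and Lemma~\ref{lemma:homgroupcriteriabdedaboveandbelow}, choosing the same test objects (compacts, complexes bounded in cohomology, $A$, and $\cogenerator{B}$) in each case. The only difference is that you spell out the degree-counting verification in (iv) and (vi), which the paper leaves implicit.
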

	
	\begin{proof}
	    \begin{enumerate}[label = (\roman*)]
	        \item Let $C \in \der{A}$ be a compact object and let $\{X_i\}_{i \in I} \in \der{A}$ for an index set $I$.
	        Since $C$ is compact and $G$ preserves set indexed coproducts there are natural isomorphisms
	        \begin{align*}
	            \dHom{B}{F(C)}{\ds{i}{I}{X_i}} &\cong \dHom{A}{C}{\ds{i}{I}{G(X_i)}},
	            \\
	            &\cong \ds{i}{I}{\dHom{A}{C}{G(X_i)}},
	            \\
	            &\cong \ds{i}{I}{\dHom{A}{F(C)}{X_i}}.
	        \end{align*}
	        Thus $F(C)$ is compact.
	   \end{enumerate}
	   
        The remaining claims follow from Remark~\ref{remark:cohomologyindicators}, Lemma~\ref{lemma:homgroupcriteriabded} and Lemma~\ref{lemma:homgroupcriteriabdedaboveandbelow} and the adjunction
        \begin{equation*}
            \dHom{A}{X}{G(Y)[n]} \cong \dHom{B}{F(X)}{Y[n]},
        \end{equation*}
        for appropriate choices of $X$ or $Y$.
        \begin{enumerate}[label = (\roman*), resume]
            \item Let $X$ run over all compact objects, and apply Lemma~\ref{lemma:homgroupcriteriabded}~(i), Lemma~\ref{lemma:homgroupcriteriabdedaboveandbelow}~(i) and Lemma~\ref{lemma:homgroupcriteriabdedaboveandbelow}~(ii).
            \item Let $X$ run over all complexes bounded in cohomology, and apply Lemma~\ref{lemma:homgroupcriteriabded}~(ii) and Lemma~\ref{lemma:homgroupcriteriabdedaboveandbelow}~(ii).
            \item Let $Y = \cogenerator{B}$ and apply Remark~\ref{remark:cohomologyindicators}.
            \item Let $Y$ run over all complexes bounded in cohomology, and apply Lemma~\ref{lemma:homgroupcriteriabded}~(iii) and Lemma~\ref{lemma:homgroupcriteriabdedaboveandbelow}~(i).
            \item Let $X=A$ and apply Remark~\ref{remark:cohomologyindicators}.
        \end{enumerate}
	\end{proof}

\section{Tensor product algebra} \label{section:tensorproductalgebra}

The first ring construction we consider is the tensor product of two finite dimensional algebras $A$ and $B$, over a field $k$. In particular, we prove that if injectives generate for the two algebras then injectives generate for their tensor product and similarly for projectives cogenerating. Firstly, we recall a description of the injective and projective modules for a tensor product algebra.

\begin{lemma} \cite[Chapter IX, Proposition 2.3]{Cartan1956}, \cite[Lemma 3.1]{Xi2000} \label{lem:tensorproduct-findimfield-inj&projmodules}
	Let $A$ and $B$ be finite dimensional algebras over a field $k$. Let $M_A$ be an $A$-module and $N_B$ be a $B$-module.
	\begin{enumerate}[label=(\roman*)]
	    \item If $M_A$ is a projective $A$-module and $N_B$ is a projective $B$-module then $\tens{M}{k}{N}$ is a projective $\left( \tens{A}{k}{B} \right)$-module.
	    \item If $M_A$ is an injective $A$-module and $N_B$ is an injective $B$-module then $\tens{M}{k}{N}$ is an injective $\left( \tens{A}{k}{B} \right) $-module.
	\end{enumerate}
\end{lemma}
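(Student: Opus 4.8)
The plan is to prove (i) directly and to deduce (ii) from (i) after a reduction to finite-dimensional modules.

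For (i), I would use that a $k$-tensor product of projectives is a direct summand of a free module. Choose split monomorphisms $M \hookrightarrow A^{(I)}$ of right $A$-modules and $N \hookrightarrow B^{(J)}$ of right $B$-modules. Since these maps are $k$-linear, tensoring them over $k$ (and using that any additive functor preserves split monomorphisms) gives a split monomorphism $M \otimes_k N \hookrightarrow A^{(I)} \otimes_k B^{(J)}$ of right $(A \otimes_k B)$-modules. As $A^{(I)} \otimes_k B^{(J)} \cong (A \otimes_k B)^{(I\times J)}$ is free, $M \otimes_k N$ is projective. The identical argument for left modules shows that a $k$-tensor product of a projective left $A$-module with a projective left $B$-module is a projective left $(A \otimes_k B)$-module; I will use this below.

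For (ii), I would first reduce to $M$ and $N$ finite-dimensional. Since $A$, $B$, and hence $A \otimes_k B$ are finite-dimensional, all three rings are Noetherian, so arbitrary direct sums of injective modules over each of them are injective. Over a finite-dimensional algebra every injective module is a direct sum of indecomposable injectives, and each indecomposable injective is the $k$-dual of an indecomposable projective module, hence finite-dimensional. Writing $M \cong \bigoplus_\alpha M_\alpha$ and $N \cong \bigoplus_\beta N_\beta$ with the $M_\alpha$ and $N_\beta$ finite-dimensional injective, one obtains $M \otimes_k N \cong \bigoplus_{\alpha,\beta} (M_\alpha \otimes_k N_\beta)$ as right $(A \otimes_k B)$-modules; so, again using that direct sums of injectives over the Noetherian ring $A \otimes_k B$ are injective, it is enough to treat the case where $M$ and $N$ are finite-dimensional. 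In that case let $D = \mathrm{Hom}_k(-,k)$ be the standard duality between finite-dimensional right and left modules (over each of $A$, $B$, and $A \otimes_k B$), which interchanges injectives and projectives. Then $DM$ is a finite-dimensional projective left $A$-module and $DN$ a finite-dimensional projective left $B$-module, so by the left-module form of (i), $DM \otimes_k DN$ is a projective left $(A \otimes_k B)$-module. The canonical map $DM \otimes_k DN \to D(M \otimes_k N)$ is an isomorphism of left $(A \otimes_k B)$-modules because $M$ and $N$ are finite-dimensional, whence $D(M \otimes_k N)$ is projective and therefore $M \otimes_k N \cong D(D(M \otimes_k N))$ is an injective right $(A \otimes_k B)$-module.

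The routine part is checking that the canonical $k$-linear isomorphisms used above ($A^{(I)} \otimes_k B^{(J)} \cong (A \otimes_k B)^{(I \times J)}$, $DM \otimes_k DN \cong D(M \otimes_k N)$, and $D \circ D \cong \mathrm{id}$) respect the relevant module structures, and that the split monomorphisms can be chosen module-linear. The one genuinely delicate point is that $M$ and $N$ need not be finite-dimensional, so the naive duality argument does not apply directly; this is why the proof passes through the decomposition of injectives over a finite-dimensional algebra into finite-dimensional indecomposables together with the fact that over a Noetherian ring direct sums of injectives are injective. One could alternatively bypass this reduction entirely and simply cite \cite[Chapter IX, Proposition 2.3]{Cartan1956} or \cite[Lemma 3.1]{Xi2000}.
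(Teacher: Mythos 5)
Your argument is correct. Note that the paper itself offers no proof of this lemma; it is simply quoted from \cite[Chapter IX, Proposition 2.3]{Cartan1956} and \cite[Lemma 3.1]{Xi2000}, so there is no in-paper argument to compare against. Your part (i) is the standard ``summand of a free module'' argument and is exactly what one would expect. Part (ii) is the more delicate half, and you have correctly identified and handled the only real issue: for infinite-dimensional injectives the naive $k$-duality argument breaks down, and your reduction --- decompose $M$ and $N$ into finite-dimensional indecomposable injectives (possible over an Artinian algebra), apply duality and part (i) to each finite-dimensional piece, and reassemble using that direct sums of injectives over the Noetherian ring $\tens{A}{k}{B}$ are injective --- is a complete and correct way around it. The compatibility checks you flag as routine (the $(A\otimes_k B)$-linearity of $DM \otimes_k DN \to D(\tens{M}{k}{N})$ and of the splittings in part (i)) are indeed routine and hold. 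For the purposes of the paper, only finite coproducts of the modules $\dual{A}$ and $\dual{B}$ are ever fed into this lemma, so the infinite-dimensional case is not strictly needed downstream, but your proof covers the lemma as stated.
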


Notice that the structure of these modules is functorial in either argument. For a $B$-module $N_B$ define $F_N \coloneqq \function{\tens{-}{k}{N}}{\Mod{A}}{\Mod{(\tens{A}{k}{B})}}$. Similarly, for an $A$-module $M_A$ define $G_M \coloneqq \function{\tens{M}{k}{-}}{\Mod{B}}{\Mod{(\tens{A}{k}{B})}}$. Since $k$ is a field the functors $F_N$ and $G_M$ are exact. Hence these functors extend to triangle functors $\function{F_N}{\der{A}}{\der{\tens{A}{k}{B}}}$ and $ \function{G_M}{\der{B}}{\der{\tens{A}{k}{B}}}$.

\begin{prop} \label{prop:tensorproduct-findimfield-ig&pc}
    Let $A$ and $B$ be finite dimensional algebras over a field $k$. 
    \begin{enumerate}[label=(\roman*)]
        \item If injectives generate for both $A$ and $B$ then injectives generate for $\tens{A}{k}{B}$.
        \item If projectives cogenerate for both $A$ and $B$ then projectives cogenerate for $\tens{A}{k}{B}$.
    \end{enumerate}
\end{prop}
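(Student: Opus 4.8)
The plan is to prove both parts by transporting (co)generation along one tensor factor at a time, using the exact triangle functors $F_N = \tens{-}{k}{N}$ and $G_M = \tens{M}{k}{-}$ introduced above, together with Proposition~\ref{prop:imageoffunctor}, Lemma~\ref{lem:tensorproduct-findimfield-inj&projmodules}, and the facts that $\der{\tens{A}{k}{B}}$ is generated by its regular module and cogenerated by its injective cogenerator. Throughout write $\Lambda = \tens{A}{k}{B}$.

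For (i) it suffices to prove $\Lambda_\Lambda \in \lsubinj{\Lambda}$, since $\Lambda_\Lambda$ generates $\der{\Lambda}$. First I would fix an injective $A$-module $I$ and consider $\function{G_I}{\der{B}}{\der{\Lambda}}$: tensoring over a field preserves coproducts, and by Lemma~\ref{lem:tensorproduct-findimfield-inj&projmodules}~(ii) this functor sends $\Inj{B}$ into $\Inj{\Lambda} \subseteq \lsubinj{\Lambda}$, so since $\Inj{B}$ generates $\der{B}$, Proposition~\ref{prop:imageoffunctor}~(i) gives $\im{G_I} \subseteq \lsubinj{\Lambda}$; in particular $\tens{I}{k}{B} = G_I(B_B) \in \lsubinj{\Lambda}$. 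Feeding this back in, the functor $\function{F_B}{\der{A}}{\der{\Lambda}}$ also preserves coproducts and, by the previous sentence, sends every injective $A$-module into $\lsubinj{\Lambda}$; since $\Inj{A}$ generates $\der{A}$, Proposition~\ref{prop:imageoffunctor}~(i) gives $\im{F_B} \subseteq \lsubinj{\Lambda}$, so $\Lambda_\Lambda = F_B(A_A) \in \lsubinj{\Lambda}$.

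For (ii) the argument is formally dual but needs an extra step. Since $\Lambda$ is finite dimensional, its injective cogenerator is $\dual{\Lambda} = \Homgroup{k}{\Lambda}{k} \cong \tens{\dual{A}}{k}{\dual{B}}$, and by Remark~\ref{remark:(co)generatorsprove(pc)ig} it cogenerates $\der{\Lambda}$, so it is enough to show $\dual{\Lambda} \in \cosubproj{\Lambda}$. First, $\function{G_A}{\der{B}}{\der{\Lambda}}$ preserves products because $A$ is finite dimensional, and it sends $\Proj{B}$ into $\Proj{\Lambda}$ by Lemma~\ref{lem:tensorproduct-findimfield-inj&projmodules}~(i) (as $A$ is projective over itself), so since $\Proj{B}$ cogenerates $\der{B}$, Proposition~\ref{prop:imageoffunctor}~(ii) gives $\tens{A}{k}{X} \in \cosubproj{\Lambda}$ for every $X \in \der{B}$. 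Second, given any projective $A$-module $P$, write it as a summand of a free module $A^{(S)}$; then $\tens{P}{k}{\dual{B}}$ is a summand of $\tens{A^{(S)}}{k}{\dual{B}} \cong \tens{A}{k}{(\dual{B})^{(S)}}$, which is in $\cosubproj{\Lambda}$ by the first step, so $\tens{P}{k}{\dual{B}} \in \cosubproj{\Lambda}$ as colocalising subcategories are closed under summands. Third, $\function{F_{\dual{B}}}{\der{A}}{\der{\Lambda}}$ preserves products because $\dual{B}$ is finite dimensional and, by the second step, sends $\Proj{A}$ into $\cosubproj{\Lambda}$, so since $\Proj{A}$ cogenerates $\der{A}$, Proposition~\ref{prop:imageoffunctor}~(ii) gives $\dual{\Lambda} = F_{\dual{B}}(\dual{A}) \in \cosubproj{\Lambda}$.

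I expect the main obstacle to be precisely the asymmetry made visible in (ii): a colocalising subcategory need not be closed under coproducts, so one cannot simply dualise (i) by applying $G_P$ to an infinite-dimensional projective $A$-module $P$, and in any case such a $G_P$ need not preserve products. The fix is to apply the $\tens{M}{k}{-}$-type functors only with $M$ finite dimensional (so that products are preserved) and to reduce arbitrary projectives to the free case, which moves the troublesome coproduct onto the finite-dimensional tensor factor where it is harmless. Everything else --- that the functors involved are triangle functors with the stated exactness, that $\tens{-}{k}{-}$ commutes with coproducts in each variable and with products in a finite-dimensional variable, and that $\dual{\Lambda} \cong \tens{\dual{A}}{k}{\dual{B}}$ as $\Lambda$-modules --- is routine.
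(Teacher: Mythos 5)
Your proof is correct, and the overall strategy coincides with the paper's: handle one tensor factor at a time via the exact functors $\tens{-}{k}{N}$ and $\tens{M}{k}{-}$ and two applications of Proposition~\ref{prop:imageoffunctor}. For (i) the difference is cosmetic (the paper first reduces $\Inj{A}$ to the single generator $\dual{A}$ and applies $F_{\dual{B}}$ then $G_A$; you quantify over all injectives and apply $G_I$ then $F_B$). For (ii), however, your treatment is genuinely more careful than the paper's ``follows similarly.'' A literal dualisation, applying $G_{\dual{A}}$ to the class $\Proj{B}$, runs into exactly the asymmetry you identify: to get $\tens{\dual{A}}{k}{P}$ into $\cosubproj{\Lambda}$ for an arbitrary projective $P_B$ one would be led to the coproduct $\left(\tens{\dual{A}}{k}{B}\right)^{(S)}$, and the paper's sketch implicitly relies on the fact that over a finite dimensional algebra every projective module is a direct summand of a \emph{product} of copies of the regular module (so that $\cosubproj{B} = \cosub{B}{B}$) --- true, since such algebras are left perfect and right coherent, but not free. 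Your step of rewriting $\tens{A^{(S)}}{k}{\dual{B}} \cong \tens{A}{k}{(\dual{B})^{(S)}}$ and absorbing the coproduct into the argument of $G_A$, whose entire image is already known to lie in $\cosubproj{\Lambda}$, sidesteps this fact entirely and is a clean way to close that gap.
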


\begin{proof}
    \begin{enumerate}[label = (\roman*)]
        \item Since $A$ is a finite dimensional algebra over a field $k$ every injective $A$-module is a direct summand of a set indexed coproduct of copies of $\dual{A} = \Homgroup{k}{A}{k}$. Thus $\lsubinj{A} = \lsub{A}{\dual{A}}$. Similarly, $\lsubinj{B} = \lsub{B}{\dual{B}}$.
        
        Denote $C \coloneqq \tens{A}{k}{B}$. By Lemma~\ref{lem:tensorproduct-findimfield-inj&projmodules}, $F_{\dual{B}}(\dual{A}) = \tens{\dual{A}}{k}{\dual{B}}$ is an injective $C$-module. Moreover, $F_{\dual{B}}$ preserves set indexed coproducts. Suppose that injectives generate for $A$. Then, by Proposition~\ref{prop:imageoffunctor}, $\im{F_{\dual{B}}}$ is a subcategory of $\lsubinj{C}$. In particular, $F_{\dual{B}}(A) = \tens{A}{k}{\dual{B}}$ is in $\lsubinj{C}$.
        
        Now consider the functor $G_{A} \coloneqq \tens{A}{k}{-}$. By the previous argument  
        \begin{equation*}
            G_{A}(\dual{B}) = \tens{A}{k}{\dual{B}} = F_{\dual{B}}(A) \in \lsubinj{C}.
        \end{equation*}
        Moreover, $G_{A}$ preserves set indexed coproducts. Suppose that injectives generate for $B$. Then, by Proposition~\ref{prop:imageoffunctor}, $\im{G_{A}}$ is a subcategory of $\lsubinj{C}$. In particular, $G_A(B) = \tens{A}{k}{B} = C$ is in $\lsubinj{C}$. Consequently, $\lsub{C}{C} = \der{C}$ is a subcategory of $\lsubinj{C}$ and injectives generate for $C = \tens{A}{k}{B}$. 
        
        \item The projectives cogenerate statement follows similarly by considering $F_B := \tens{-}{k}{B}$ and $G_{\dual{A}} := \tens{\dual{A}}{k}{-}$.
    \end{enumerate}
    
\end{proof}

The converse to Proposition~\ref{prop:tensorproduct-findimfield-ig&pc} is shown as an application of the results about ring homomorphisms considered in Section~\ref{section:ringhomomorphisms}. In particular, the converse statement follows immediately from Lemma~\ref{lem:ring_ext_gen_right_implies_gen_left}.

\section{Separable equivalence} \label{section:equivalences}

Rickard proved that if two algebras are derived equivalent then injectives generate for one if and only if injectives generate for the other \cite[Theorem 3.4]{Rickard2019}. Here we show the statement extends to separable equivalence of rings, defined by Linckelmann \cite[Section 3]{Linckelmann2011}. In particular, we prove the result using separably dividing rings \cite[Section 2]{BerghErdmann2011}.

\begin{defn}[Separably dividing rings.] \label{defn:separablydividingrings}
    Let $A$ and $B$ be rings. Then $B$ separably divides $A$ if there exist bimodules $_AM_B$ and $_BN_A$ such that:
    \begin{enumerate}[label = (\roman*)]
        \item The modules $_AM$, $M_B$, $_BN$ and $N_A$ are all finitely generated projectives.
        \item There exists a bimodule $_BY_B$ such that $\tens{_BN}{A}{M_B}$ and ${_BB_B} \oplus {_BY_B}$ are isomorphic as $(B,B)$-bimodules.
    \end{enumerate}
\end{defn}

\begin{prop} \label{prop:separableequivalence}
    Let $A$ and $B$ be rings such that $B$ separably divides $A$. 
    \begin{enumerate}[label = (\roman*)]
        \item If injectives generate for $A$ then injectives generate for $B$.
        \item If projectives cogenerate for $A$ then projectives cogenerate for $B$.
    \end{enumerate}
\end{prop}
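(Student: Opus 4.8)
The plan is to transport injective generation and projective cogeneration from $A$ to $B$ along the triangle functors coming from the bimodules ${}_AM_B$ and ${}_BN_A$, using that the identity functor of $\der{B}$ is a direct summand of a composite of these. Since ${}_AM$, $M_B$, ${}_BN$ and $N_A$ are all finitely generated projective, the functors $\tens{-}{A}{M}\colon\Mod{A}\to\Mod{B}$ and $\tens{-}{B}{N}\colon\Mod{B}\to\Mod{A}$ are exact (the relevant one-sided modules being flat), hence agree with their derived versions and induce triangle functors $F\coloneqq\tens{-}{A}{M}\colon\der{A}\to\der{B}$ and $\tens{-}{B}{N}\colon\der{B}\to\der{A}$. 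I would record that $F$ preserves set indexed coproducts (automatic for a tensor functor) and set indexed products (because ${}_AM$ is a summand of a finitely generated free left $A$-module), that $F$ sends injective $A$-modules to injective $B$-modules, and that $F$ sends projective $A$-modules to projective $B$-modules; the last two follow from the two adjunctions of $F$ — its right adjoint $\Homgroup{B}{M_B}{-}$ is exact since $M_B$ is projective, and since ${}_AM$ is finitely generated projective $F$ is also a right adjoint whose left adjoint is exact. Finally, associativity of the tensor product gives, for every complex $X$ of $B$-modules, a natural isomorphism $\tens{(\tens{X}{B}{N})}{A}{M}\cong\tens{X}{B}{(\tens{N}{A}{M})}$ of complexes of $B$-modules, and Definition~\ref{defn:separablydividingrings}(ii) identifies $\tens{N}{A}{M}$ with $B\oplus Y$ as $(B,B)$-bimodules, so the right-hand side is isomorphic to $X\oplus(\tens{X}{B}{Y})$; thus $X$ is a direct summand, in $\der{B}$, of $F(\tens{X}{B}{N})$.

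For part (i), assume injectives generate for $A$, so that $\Inj{A}$ generates $\der{A}$. Fix $X\in\der{B}$; then $\tens{X}{B}{N}$ is an object of $\der{A}$. Applying Proposition~\ref{prop:imageoffunctor}(i) to $F$ with $\cat{S}=\Inj{A}$ and $\cat{T}=\Inj{B}$ — legitimate because $F$ preserves coproducts and $F(I)\in\Inj{B}\subseteq\lsubinj{B}$ for every injective $A$-module $I$ — yields $\im{F}\subseteq\lsubinj{B}$, so $F(\tens{X}{B}{N})\in\lsubinj{B}$. Since a localising subcategory is closed under direct summands, $X\in\lsubinj{B}$; as $X$ was arbitrary, $\lsubinj{B}=\der{B}$. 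Part (ii) is the same argument with coproducts replaced by products, $\lsubinj{B}$ by $\cosubproj{B}$, and Proposition~\ref{prop:imageoffunctor}(i) by Proposition~\ref{prop:imageoffunctor}(ii), taking $\cat{S}=\Proj{A}$ and $\cat{T}=\Proj{B}$ and using that $F$ preserves products and sends projectives to projectives.

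I expect the main thing to get right to be the list of preservation properties of $F$ in the first step, specifically that $F$ sends injectives to injectives and projectives to projectives — this is exactly where the hypothesis that $M$ is finitely generated projective on \emph{both} sides is essential, and I would justify it carefully via the two adjunctions of $F$. Everything after that is formal bookkeeping with (co)localising subcategories and does not require the bimodule $N$ to have any special properties beyond making $\tens{X}{B}{N}$ a well-defined object of $\der{A}$ and supplying the splitting $\tens{N}{A}{M}\cong B\oplus Y$.
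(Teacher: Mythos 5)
Part (ii) of your argument is correct and is essentially the paper's: the paper also uses $\tens{-}{A}{M}$, which preserves products and projectives, and recovers $\cogenerator{B}$ (rather than an arbitrary $X$) as a direct summand of $\tens{\left(\tens{\cogenerator{B}}{B}{N}\right)}{A}{M}$. Part (i), however, has a genuine gap at exactly the point you flag as the crux: the claim that $F=\tens{-}{A}{M}$ sends injective $A$-modules to injective $B$-modules does not follow from the stated hypotheses. When $_AM$ is finitely generated projective one has $F\cong\Homgroup{A}{M^{\vee}}{-}$ with $M^{\vee}=\Homgroup{A}{{_AM}}{{_AA}}$ a $(B,A)$-bimodule, so the left adjoint of $F$ is $\tens{-}{B}{M^{\vee}}$; this is exact only when $_BM^{\vee}$ is flat, and neither $_AM$ nor $M_B$ being finitely generated projective forces that. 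Concretely, take $A=k$ a field, $B$ a finite dimensional $k$-algebra that is not self-injective, and $M={_kB_B}$: then $_AM$ and $M_B$ are both finitely generated projective, $k$ is an injective $A$-module, but $F(k)=B_B$ is not an injective $B$-module. (This particular $M$ cannot occur in a separable division unless $B$ is separable over $k$, but your proof never invokes the splitting of $\tens{N}{A}{M}$ when establishing the preservation properties of $F$, so the argument as written is not valid.)

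The repair is to run part (i) through the other adjoint, as the paper does: use $G\coloneqq\Homgroup{A}{N}{-}\colon\Mod{A}\to\Mod{B}$, which preserves injectives because its left adjoint $\tens{-}{B}{N}$ is exact ($_BN$ is projective), and preserves set indexed coproducts because $N_A$ is finitely generated projective. Then Proposition~\ref{prop:imageoffunctor}~(i) gives $\im{G}\subseteq\lsubinj{B}$. Note that your ``every $X$ is a summand of the composite'' device does not transfer to $G$: the unit $X\to\Homgroup{A}{N}{\tens{X}{B}{N}}$ need not split. Instead one applies $G$ to the single $A$-module $\Homgroup{B}{M_B}{B_B}$ and uses the adjunction
\begin{equation*}
    \Homgroup{A}{N}{\Homgroup{B}{M}{B}}\cong\Homgroup{B}{\tens{N}{A}{M}}{B}\cong B\oplus\Homgroup{B}{Y}{B}
\end{equation*}
to exhibit $B$ as a direct summand of an object of $\im{G}$; since $B$ generates $\der{B}$ and localising subcategories are closed under summands, injectives generate for $B$.
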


\begin{proof}
    \begin{enumerate}[label = (\roman*)]
        \item Let $_AM_B$, $_BN_A$ and $_BY_B$ be as in Definition~\ref{defn:separablydividingrings}. Consider the adjoint functors
        \begin{align*}
            \function{\tens{-}{B}{N}}{\Mod{B}}{\Mod{A}},
            \\
            \function{\Homgroup{A}{N}{-}}{\Mod{A}}{\Mod{B}}.
        \end{align*}
        Since both $_BN$ and $N_A$ are projective, $\tens{-}{B}{N}$ and $\Homgroup{A}{N}{-}$ are exact and extend to triangle functors. As $\Homgroup{A}{N}{-}$ has an exact left adjoint it preserves injective modules. Furthermore, the module $N_A$ is a finitely generated projective so $\Homgroup{A}{N}{-}$ also preserves set indexed coproducts. 
        
        Suppose that injectives generate for $A$. Since $\Homgroup{A}{N}{-}$ preserves injective modules and coproducts its image is a subcategory of $\lsubinj{B}$ by Proposition~\ref{prop:imageoffunctor}. By adjunction 
        \begin{equation*}
            \Hom{A}{}{N}{}{\Hom{B}{A}{M}{}{B}} \cong \Hom{B}{}{\tens{N}{A}{M}}{}{B}{},
        \end{equation*}
        as right $B$-modules and so $\Hom{A}{}{\tens{N}{A}{M}}{}{B}$ is in $\lsubinj{B}$. Since $\tens{_BN}{A}{M_B} \cong {_BB_B} \oplus {_BY_B}$ as $(B,B)$-bimodules,
        \begin{equation*}
            \Hom{B}{}{\tens{N}{A}{M}}{}{B} \cong B \oplus \Hom{B}{}{Y}{}{B},
        \end{equation*}
        as right $B$-modules. Moreover, localising subcategories are closed under direct summands so $B$ is in $\lsubinj{B}$ and injectives generate for $B$.
        
        \item Suppose that projectives cogenerate for $A$. Since $_AM$ and $M_B$ are finitely generated projective modules, $\tens{-}{A}{M_B}$ preserves set indexed products and projective modules. Hence $\im{\tens{-}{A}{M_B}}$ is a subcategory of $\cosubproj{B}$ by Proposition~\ref{prop:imageoffunctor}. Since $\cogenerator{B}$ is a direct summand of $\tens{\left(\tens{\cogenerator{B}}{B}{N}\right)}{A}{M}$, $\cogenerator{B}$ is in $\cosubproj{B}$ so projectives cogenerate for $B$.
    \end{enumerate}

\end{proof}

\begin{defn}[Separable Equivalence] \cite[Definition 3.1]{Linckelmann2011}
    Let $A$ and $B$ be rings. Then $A$ and $B$ are separably equivalent if $A$ separably divides $B$ via bimodules ${_AM_B}$ and ${_BN_A}$ and $B$ separably divides $A$ via the same bimodules ${_AM_B}$ and ${_BN_A}$. 
\end{defn}

\begin{eg}
    Let $G$ be a finite group. Let $k$ be a field of positive characteristic $p$ and $H$ be a Sylow p-subgroup of $G$. Then the group algebras $kG$ and $kH$ are separably equivalent \cite[Section 3]{Linckelmann2011}.
\end{eg}

\begin{cor}
    Let $A$ and $B$ be separably equivalent rings.
    \begin{enumerate}[label = (\roman*)]
        \item Injectives generate for $A$ if and only if injectives generate for $B$.
        \item Projectives cogenerate for $A$ if and only if projectives cogenerate for $B$.
    \end{enumerate}
\end{cor}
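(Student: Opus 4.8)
The plan is to deduce the corollary directly from Proposition~\ref{prop:separableequivalence} by unwinding the definition of separable equivalence into two instances of the separably-dividing relation. If $A$ and $B$ are separably equivalent via bimodules ${_AM_B}$ and ${_BN_A}$, then by definition $A$ separably divides $B$ (via these bimodules) and $B$ separably divides $A$ (via the same bimodules). In particular both separably-dividing hypotheses of Proposition~\ref{prop:separableequivalence} are available, with the roles of $A$ and $B$ interchanged in the second instance.

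For part (i): apply Proposition~\ref{prop:separableequivalence}(i) to the statement ``$B$ separably divides $A$'' to get that if injectives generate for $A$ then injectives generate for $B$; then apply Proposition~\ref{prop:separableequivalence}(i) again, this time to ``$A$ separably divides $B$'', to get the reverse implication. Combining the two gives the biconditional. Part (ii) follows in exactly the same way using Proposition~\ref{prop:separableequivalence}(ii) twice. One small point worth making explicit is that the definition of ``separably divides'' used in Proposition~\ref{prop:separableequivalence} only refers to the existence of suitable bimodules satisfying conditions (i) and (ii) of Definition~\ref{defn:separablydividingrings}; separable equivalence asserts precisely that such bimodules exist in both directions, so the hypotheses transfer verbatim.

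There is essentially no obstacle here: the corollary is a formal consequence of the proposition, and the only thing to check is that the symmetric formulation in the definition of separable equivalence really does supply both directions of the separably-dividing relation, which it does by inspection. I would present the proof in two or three sentences, invoking Proposition~\ref{prop:separableequivalence} twice (once per direction) for each of the two parts.

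\begin{proof}
    Since $A$ and $B$ are separably equivalent, there exist bimodules ${_AM_B}$ and ${_BN_A}$ such that $A$ separably divides $B$ via ${_AM_B}$ and ${_BN_A}$, and $B$ separably divides $A$ via the same bimodules.
    \begin{enumerate}[label = (\roman*)]
        \item Applying Proposition~\ref{prop:separableequivalence}~(i) to the relation ``$B$ separably divides $A$'' shows that if injectives generate for $A$ then injectives generate for $B$. Applying Proposition~\ref{prop:separableequivalence}~(i) to the relation ``$A$ separably divides $B$'' shows the converse. Hence injectives generate for $A$ if and only if injectives generate for $B$.
        \item This follows in the same way, applying Proposition~\ref{prop:separableequivalence}~(ii) to each of the two separably-dividing relations.
    \end{enumerate}
\end{proof}
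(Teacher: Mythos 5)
Your proof is correct and is exactly the paper's argument: the corollary follows by applying Proposition~\ref{prop:separableequivalence} to each of the two separably-dividing relations supplied by the definition of separable equivalence. The paper's own proof is just a one-line version of the same observation.
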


\begin{proof}
    Since $A$ and $B$ are separably equivalent, $A$ separably divides $B$ and $B$ separably divides $A$.
\end{proof}

\section{Ring homomorphisms} \label{section:ringhomomorphisms}
   
Ring homomorphisms give rise to a triple of functors which interact well with both injectives generate and projectives cogenerate statements. In this section we provide sufficient conditions on a ring extension $\function{f}{B}{A}$ such that if injectives generate for $A$ then injectives generate for $B$, and vice versa. In particular, we focus on Frobenius extensions \cite{Kasch1954, Nakayama1960}, almost excellent extensions \cite{Xue1996} and trivial extensions. In Subsection~\ref{subsection:arrowremoval} we apply the results in this section to the arrow removal operation defined in \cite[Section 4]{Green2018}. 
   
Recall that for a ring homomorphism $\function{f}{B}{A}$ there exist three functors between the module categories of $A$ and $B$, denoted as follows,
        \begin{itemize}
            \item Induction, $\ind{B}{A}{} \coloneqq \function{\tens{-}{B}{A_A}}{\Mod{B}}{\Mod{A}}$,
            \item Restriction, $\res{B}{A}{} \coloneqq \function{\Hom{A}{B}{A}{}{-}}{\Mod{A}}{\Mod{B}}$,
            \item Coinduction, $\coind{B}{A}{} \coloneqq \function{\Hom{B}{}{_AA}{}{-}}{\Mod{B}}{\Mod{A}}$.
        \end{itemize}
    
    Note that both $(\ind{B}{A}{},\res{B}{A}{})$ and $(\res{B}{A}{},\coind{B}{A}{})$ are adjoint pairs of functors.
    
    \begin{remark} \label{remark:(co)indpreservesboundedcomplexes}
        Recall that, if $_BA$ has finite flat dimension and $M \in \Mod{B}$ then $\tor{B}{i}{M}{A}=0$ for all but finitely many $i \in \integers$ so $\lind{B}{A}{M}$ is bounded in cohomology. Hence, as $\lind{B}{A}{}$ is a triangle functor, $\lind{B}{A}{}$ preserves complexes bounded in cohomology. Similarly, if $A_B$ has finite projective dimension then $\ext{i}{B}{A}{M}=0$ for all but finitely many $i \in \integers$ and $\rcoind{B}{A}{}$ preserves complexes bounded in cohomology.
    \end{remark}

    \begin{lemma} \label{lem:ring_ext_gen_right_implies_gen_left}
        Let $A$ and $B$ be rings with a ring homomorphism $\function{f}{B}{A}$.
        \begin{enumerate}[label = (\roman*)]
            \item Suppose that $_BA$ has finite flat dimension as a left $B$-module and that $\res{B}{A}{\Mod{A}}$ generates $\der{B}$. If injectives generate for $A$ then injectives generate for $B$.
            \item Suppose that $A_B$ has finite projective dimension as a right $B$-module and that $\res{B}{A}{\Mod{A}}$ cogenerates $\der{B}$. If projectives cogenerate for $A$ then projectives cogenerate for $B$.
        \end{enumerate}
    \end{lemma}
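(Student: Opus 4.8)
The plan is to prove (i) and (ii) as a dual pair, spelling out (i) in detail and noting that (ii) follows by the same argument with the roles of "injective/coproduct/generate" and "projective/product/cogenerate" swapped, using coinduction in place of induction. So fix a ring homomorphism $\function{f}{B}{A}$ with $_BA$ of finite flat dimension, and assume $\res{B}{A}{\Mod{A}}$ generates $\der{B}$ and that injectives generate for $A$.

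The key observation is that the derived induction functor $\lind{B}{A}{}\colon \der{B} \to \der{A}$ is left adjoint to restriction $\res{B}{A}{}\colon \der{A} \to \der{B}$ (the derived version of the module-level adjunction $(\ind{B}{A}{},\res{B}{A}{})$). First I would record that restriction is exact and hence already a triangle functor on the unbounded derived categories, and that it preserves set indexed coproducts (it is computed degreewise by a Hom out of the finitely... — actually more simply, $\res{B}{A}{}$ is just "regard an $A$-module as a $B$-module", which is exact and commutes with all colimits and limits). Next, by Remark~\ref{remark:(co)indpreservesboundedcomplexes}, since $_BA$ has finite flat dimension, $\lind{B}{A}{}$ preserves complexes bounded in cohomology. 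Now apply Lemma~\ref{lemma:adjointfunctorspreserve}: with $F = \lind{B}{A}{}$ and $G = \res{B}{A}{}$, part (iii) gives that $\res{B}{A}{}$ preserves bounded complexes of injectives — in particular it sends an injective $A$-module (viewed as a complex in degree $0$) to a complex in $\der{B}$ that is quasi-isomorphic to a bounded complex of injective $B$-modules, hence lies in $\lsubinj{B}$ by the second part of the cited Lemma~\cite[Proposition 2.1]{Rickard2019}.

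The endgame combines this with Proposition~\ref{prop:imageoffunctor}(i). Take the triangle functor $F = \res{B}{A}{}\colon \der{A}\to\der{B}$, the class $\cat{S} = \Inj{A}$, which generates $\der{A}$ by hypothesis (injectives generate for $A$), and the class $\cat{T} = \Inj{B}$. We have just shown $F(S) \in \lsubinj{B}$ for every $S \in \Inj{A}$, and $F$ preserves set indexed coproducts, so Proposition~\ref{prop:imageoffunctor}(i) yields $\im{\res{B}{A}{}} \subseteq \lsubinj{B}$. But $\res{B}{A}{}$ applied to all of $\der{A}$ contains $\res{B}{A}{\Mod{A}}$, which by assumption generates $\der{B}$; hence $\lsub{B}{\res{B}{A}{\Mod{A}}} = \der{B}$ is contained in the localising subcategory $\lsubinj{B}$, i.e. injectives generate for $B$. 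For (ii) one runs the dual argument with the adjoint pair $(\res{B}{A}{},\rcoind{B}{A}{})$: the hypothesis that $A_B$ has finite projective dimension makes $\rcoind{B}{A}{}$ preserve complexes bounded in cohomology (Remark~\ref{remark:(co)indpreservesboundedcomplexes}), Lemma~\ref{lemma:adjointfunctorspreserve}(v) then shows $\res{B}{A}{}$ preserves bounded complexes of projectives, $\res{B}{A}{}$ preserves set indexed products, and Proposition~\ref{prop:imageoffunctor}(ii) with $\cat{S} = \Proj{A}$, $\cat{T} = \Proj{B}$ finishes the proof.

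I do not expect a serious obstacle here; the only point needing a little care is verifying that the relevant adjunction and triangle-functor statements genuinely hold at the level of the \emph{unbounded} derived categories (so that the cited lemmas apply verbatim), and confirming that $\res{B}{A}{}$, being exact, needs no derived-functor decoration and visibly commutes with arbitrary coproducts and products. Everything else is an assembly of Lemma~\ref{lemma:adjointfunctorspreserve} and Proposition~\ref{prop:imageoffunctor}.
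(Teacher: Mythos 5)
Your proposal is correct and follows essentially the same route as the paper's own proof: Remark~\ref{remark:(co)indpreservesboundedcomplexes} gives that $\lind{B}{A}{}$ (respectively $\rcoind{B}{A}{}$) preserves complexes bounded in cohomology, Lemma~\ref{lemma:adjointfunctorspreserve} then gives that $\res{B}{A}{}$ preserves bounded complexes of injectives (respectively projectives), and Proposition~\ref{prop:imageoffunctor} combined with the generation hypothesis on $\res{B}{A}{\Mod{A}}$ finishes the argument. The extra care you take over the adjunction at the level of unbounded derived categories and the exactness of restriction is sound but does not change the substance.
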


    \begin{proof}
        \begin{enumerate}[label = (\roman*)]
            \item If $_BA$ has finite flat dimension as a left $B$-module, then $\lind{B}{A}{}$ preserves complexes bounded in cohomology by Remark~\ref{remark:(co)indpreservesboundedcomplexes}. Thus, by Lemma~\ref{lemma:adjointfunctorspreserve}, $\res{B}{A}{}$ preserves bounded complexes of injectives. Furthermore, $\res{B}{A}{}$ preserves set indexed coproducts. Hence if injectives generate for $A$ then $\im{\res{B}{A}{}}$ is a subcategory of $\lsubinj{B}$, by Proposition~\ref{prop:imageoffunctor}. Thus, as $\res{B}{A}{\Mod{A}}$ generates $\der{B}$, injectives generate for $B$.
            
            \item The second statement follows similarly. In particular, if $A_B$ has finite projective dimension as right $B$-module, then $\rcoind{B}{A}{}$ preserves complexes bounded in cohomology by Remark~\ref{remark:(co)indpreservesboundedcomplexes}. Then we apply Lemma~\ref{lemma:adjointfunctorspreserve} and Proposition~\ref{prop:imageoffunctor} to $\res{B}{A}{}$.
        \end{enumerate}
    
    \end{proof}
    
    There are many ways that $\res{B}{A}{\Mod{A}}$ could generate $\der{B}$. If $A_B$ is a generator of $\Mod{B}$ (in the sense that every $B$-module is a quotient of a set indexed coproduct of copies of $A_B$), then, by Remark~\ref{remark:(co)generatorsprove(pc)ig}, $A_B$ generates $\der{B}$. There are many examples of familiar ring extensions which satisfy both this property and the conditions of Lemma~\ref{lem:ring_ext_gen_right_implies_gen_left}, some of which we list here.
    
    \begin{itemize}
        \item \underline{Tensor product algebra.}
        
        Let $A$ and $B$ be finite dimensional algebras over a field $k$. Then the tensor product algebra $\tens{A}{k}{B}$ is an extension of both $A$ and $B$. Consider the ring homomorphism $\function{f}{A}{\tens{A}{k}{B}}$ with $f(a) \coloneqq \tens{a}{k}{1_B}$ for all $a \in A$. In particular, $_A (\tens{A}{k}{B})$ considered as a left $A$-module is a set indexed coproduct of copies of $_AA$, one for each basis element of $_kB$. Hence $_A (\tens{A}{k}{B})$ is flat as a left $A$-module. Furthermore, $(\tens{A}{k}{B})_A$ considered as a right $A$-module is a set indexed coproduct of copies of $A_A$. Thus $(\tens{A}{k}{B})_A$ is a generator of $\Mod{A}$.
        
        \item \underline{Frobenius extensions.}
        
        Kasch \cite{Kasch1954} defined a generalisation of a Frobenius algebra called a free Frobenius extension. Nakayama and Tsuzuku \cite{Nakayama1960} generalised this definition further to a Frobenius extension.

        \begin{defn}[(Free) Frobenius extension]
        Let $A$ and $B$ be rings and $\function{f}{B}{A}$ a ring homomorphism. Then $A$ is a (free) Frobenius extension of $B$ if the following are satisfied.
            \begin{itemize}
                \item The module $A_B$ is a finitely generated projective (respectively free) $B$-module.
                \item The bimodule $\Hom{B}{A}{A}{B}{B}$ is isomorphic as a $(B,A)$-bimodule to $_BA_A$.
            \end{itemize}
        \end{defn}
    
        The definition of a Frobenius extension implies that the two functors, $\ind{B}{A}{}$ and $\coind{B}{A}{}$ are isomorphic. Thus $\ind{B}{A}{}$ is exact, and so $_BA$ is flat.

            \begin{eg} \label{eg:frobeniusextensions}
            There are well known examples of Frobenius extensions which have the property that $A_B$ is a generator of $\Mod{B}$ and that $(\cogenerator{A})_B$ is a cogenerator of $\Mod{B}$.
            
            \begin{itemize}
                \item Free Frobenius extensions
                
                Since $A_B$ is a finitely generated free $B$-module, $A_B$ is a generator of $\Mod{B}$. Moreover, since ${_BA_A} \cong \Homgroup{B}{{_AA}}{{_BB}}$ as $(B,A)$-bimodules, ${_BA}$ is free as a left $B$-module and so $\cogenerator{B}_B$ is a direct summand of $(\cogenerator{A})_B$. Thus $(\cogenerator{A})_B$ is a cogenerator of $\Mod{B}$.
                
                \item Strongly $G$-graded rings for a finite group $G$. \cite[Example B]{Bell1993}.
                
                Let $G$ be a finite group and $A$ be a ring graded by $G$. Then $A$ is strongly graded by $G$ if $A_gA_h = A_{gh}$ for all $g, h \in G$. Let $1$ be the identity element of $G$. Then $A$ is a Frobenius extension of $A_1$. Moreover, $A_1$ is a direct summand of $A$ as an $(A_1, A_1)$-bimodule. Thus $A_{A_1}$ is a generator of $\Mod{A_1}$ and $(\cogenerator{A})_{A_1}$ is a cogenerator of $\Mod{A_1}$ This collection of graded rings includes skew group algebras, smash products and crossed products for finite groups.
                    
            \end{itemize}
            \end{eg}
        
        \item \underline{Almost excellent extensions.}
            
            Almost excellent extensions were defined by Xue \cite{Xue1996} as a generalisation of excellent extensions first introduced by Passman \cite{Passman1977}. The interaction of excellent extensions with various properties of rings has been studied in \cite{Huang2012}.

            \begin{defn}[Almost excellent extension]
                Let $A$ and $B$ be rings. Then $A$ is an almost excellent extension of $B$ if the following hold.
                \begin{itemize}
                    \item There exist $a_1,a_2,...,a_n \in A$ such that $A = \sum_{i=1}^n a_iB$ and $a_iB = Ba_i$ for all $1 \leq i \leq n$.
                    \item $A$ is right $B$-projective.
                    \item $_B A$ is flat and $A_B$ is projective.
                \end{itemize}
            \end{defn}

            Recall the definition of right $B$-projective rings.
            
            \begin{defn}[Right $B$-projective]
                Let $A$ and $B$ be rings and $\function{f}{B}{A}$ be a ring homomorphism. A short exact sequence of $A$-modules
                \begin{equation*}
                    \ses{L_A}{K_A}{N_A}{f}{g},
                \end{equation*}
                is an $(A,B)$-exact sequence if it splits as a short exact sequence of right $B$-modules. 
                
                If every $(A,B)$-exact sequence also splits as a short exact sequence of right $A$-modules then $A$ is right $B$-projective.
            \end{defn}

            By definition $_BA$ is flat and $A_B$ is projective, thus to apply Lemma~\ref{lem:ring_ext_gen_right_implies_gen_left}, all that is left to show is that $\res{B}{A}{\Mod{A}}$ generates and cogenerates $\der{B}$. By \cite[Corollary 4]{Soueif1987}, $\coind{B}{A}{M}= 0$ if and only if $M = 0$ for all $M \in \Mod{B}$. Thus $\Homgroup{B}{\res{B}{A}{A}}{M}=0$ if and only if $M=0$ by adjunction. Since $A_B$ is projective this is equivalent to $A_B$ being a generator of $\Mod{B}$. Similarly, $(\cogenerator{A})_B$ is a cogenerator for $\Mod{B}$ since $\ind{B}{A}{M}=0$ if and only if $M=0$ \cite[Proposition 2.1]{Shamsuddin1992}.
            
        \item \underline{Trivial extension ring.}
        
            \begin{defn}[Trivial extension ring]
                Let $B$ be a ring and ${_BM_B}$ be a $(B,B)$-bimodule. The trivial extension of $B$ by $M$, denoted by $\triext{B}{M}$, is the ring with elements $(b,m) \in B \oplus M$, addition defined by,
                \begin{equation*}
                (b,m) + (b',m') \coloneqq (b+b', m+m'),
                \end{equation*}
                and multiplication defined by,
                \begin{equation*}
                (b,m)(b',m') \coloneqq (bb',bm'+mb').
                \end{equation*}
            \end{defn}

        Let $A \coloneqq \triext{B}{M}$. Then there exists a ring homomorphism $\function{f}{B}{A}$, defined by $f(b) = (b,0)$for all $b \in B$. Note that $_BA$ is isomorphic to $B \oplus M$ as a left $B$-module, thus $_BA$ has finite flat dimension as a left $B$-module if and only if $_BM$ has finite flat dimension as a left $B$-module. Furthermore, $A_B$ is isomorphic to $B \oplus M$ as a right $B$-module and so $A_B$ is a generator of $\Mod{B}$. Hence Lemma~\ref{lem:ring_ext_gen_right_implies_gen_left}~(i)  applies if ${_BM}$ has finite flat dimension. Similarly, Lemma~\ref{lem:ring_ext_gen_right_implies_gen_left}~(ii) applies if $M_B$ has finite projective dimension, since $\cogenerator{B}_B$ is a direct summand of $(\cogenerator{A})_B$.

        \begin{eg}
            \begin{itemize}
                \item Let $A$ be a ring, then $\triext{A}{A}$ is isomorphic to $\quotient{A[x]}{\left< x^2 \right>}{}$.
                \item Let $A$ and $B$ be rings with $_AM_B$ an $(A,B)$-bimodule. Then the triangular matrix ring $\twobytwo{A}{M}{0}{B}$ is isomorphic to $\triext{\left( A \times B \right)}{M}$.
                \item Green, Psaroudakis and Solberg \cite{Green2018} use trivial extension rings to define an operation on quiver algebras called arrow removal. This operation is considered in Subsection \ref{subsection:arrowremoval}. 
            \end{itemize}
        \end{eg}

    \end{itemize}
    
    The examples included above satisfy Lemma~\ref{lem:ring_ext_gen_right_implies_gen_left} since $A_B$ generates $\Mod{B}$. One example of a ring construction which satisfies Lemma~\ref{lem:ring_ext_gen_right_implies_gen_left} without this assumption is a quotient ring $A \coloneqq \quotient{B}{I}{}$ where $I$ is a nilpotent ideal of $B$. Since $A_B$ is annihilated by $I$, $A_B$ does not generate $\Mod{B}$. However, $\res{B}{A}{\Mod{A}}$ does generate $\der{B}$.
    
    \begin{lemma} \label{lemma:nilpotentidealig}
        Let $B$ be a ring and $I$ be a nilpotent ideal of $B$. 
        \begin{enumerate}[label = (\roman*)]
            \item If $_BI$ has finite flat dimension as a left $B$-module and injectives generate for $\quotient{B}{I}{}$ then injectives generate for $B$.
            \item If $I_B$ has finite projective dimension as a right $B$-module and projectives cogenerate for $\quotient{B}{I}{}$ then projectives cogenerate for $B$.
        \end{enumerate}
    \end{lemma}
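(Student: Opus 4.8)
The plan is to apply Lemma~\ref{lem:ring_ext_gen_right_implies_gen_left} to the quotient homomorphism $\function{f}{B}{A}$, where $A \coloneqq \quotient{B}{I}{}$; all that needs checking are the two hypotheses of that lemma for this particular extension, namely that $_BA$ has finite flat dimension as a left $B$-module (respectively that $A_B$ has finite projective dimension as a right $B$-module), and that $\res{B}{A}{\Mod{A}}$ generates (respectively cogenerates) $\der{B}$.

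For the dimension hypothesis, apply the relevant homological functor to the short exact sequence of left $B$-modules $\ses{I}{B}{A}{}{}$. Since $_BB$ is flat, a dimension-shifting argument shows that the flat dimension of $_BA$ is at most one greater than the flat dimension of $_BI$, so finiteness of the latter forces finiteness of the former; this is the hypothesis needed for part~(i). The same argument applied to the corresponding short exact sequence of right $B$-modules, with projective dimension in place of flat dimension, gives the hypothesis needed for part~(ii).

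The main point is the generation statement. Since $I$ is nilpotent, choose $n$ with $I^n = 0$ and filter the regular right $B$-module by the powers of $I$,
\[
    B_B = I^0 \supseteq I^1 \supseteq \dots \supseteq I^n = 0,
\]
each $I^j$ being a two-sided ideal and hence a right $B$-submodule. Every subquotient $I^j/I^{j+1}$ is annihilated on the right by $I$, so it is the restriction along $f$ of an $A$-module, i.e. an object of $\res{B}{A}{\Mod{A}}$. Each inclusion $I^{j+1} \hookrightarrow I^j$ gives rise to a triangle $I^{j+1} \to I^j \to I^j/I^{j+1} \to I^{j+1}[1]$ in $\der{B}$, so a descending induction on $j$, beginning from $I^n = 0$, places $B_B$ in the triangulated subcategory of $\der{B}$ generated by $\{ I^j/I^{j+1} \}_{0 \leq j < n}$; in particular $B_B$ lies in $\lsub{B}{\res{B}{A}{\Mod{A}}}$. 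Since the regular module $B_B$ generates $\der{B}$, it follows that $\res{B}{A}{\Mod{A}}$ generates $\der{B}$, and Lemma~\ref{lem:ring_ext_gen_right_implies_gen_left}~(i) proves part~(i). Part~(ii) is obtained identically, now filtering the injective cogenerator $\cogenerator{B}$ by $\cogenerator{B} \supseteq \cogenerator{B}I \supseteq \dots \supseteq \cogenerator{B}I^n = 0$ (whose subquotients are again $A$-modules), using that $\cogenerator{B}$ cogenerates $\der{B}$ by Remark~\ref{remark:(co)generatorsprove(pc)ig} and that colocalising subcategories are triangulated, and then appealing to Lemma~\ref{lem:ring_ext_gen_right_implies_gen_left}~(ii). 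The one step requiring a little care is the standard fact underlying the induction: a module equipped with a finite filtration whose successive quotients all lie in a triangulated subcategory must itself lie in that subcategory, which one gets by iterating the triangles coming from the filtration.
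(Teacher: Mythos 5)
Your proposal is correct and follows essentially the same route as the paper: reduce to Lemma~\ref{lem:ring_ext_gen_right_implies_gen_left} via the quotient map, obtain the flat/projective dimension hypothesis from the short exact sequence $\ses{I}{B}{\quotient{B}{I}{}}{}{}$, and obtain the generation hypothesis by filtering by powers of $I$ and using that the subquotients are annihilated by $I$. The only (harmless) difference is that the paper filters an arbitrary $B$-module $M$ by $MI^m$, whereas you filter just $B_B$ and $\cogenerator{B}$, which suffices since these (co)generate $\der{B}$.
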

    
    \begin{proof}
        
        The ring homomorphism we shall use is the quotient map $\function{f}{B}{\quotient{B}{I}{}}$. Let $A \coloneqq \quotient{B}{I}{}$. Then $\res{B}{A}{\Mod{A}}$ consists of the $B$-modules annihilated by $I$. Note that every $B$-module $M$ is an iterated extension of $\quotient{MI^m}{MI^{m+1}}{}$ for $m \in \integers$ via the short exact sequences
        \begin{equation*}
            \ses{MI^{m+1}}{MI^{m}}{\quotient{MI^{m}}{MI^{m+1}}{}}{}{}{}.
        \end{equation*}
        Hence every $B$-module is in the triangulated subcategory of $\der{B}$ generated by $\res{B}{A}{\Mod{A}}$. Thus $\res{B}{A}{\Mod{A}}$ both generates and cogenerates $\der{B}$.
        \begin{enumerate}[label = (\roman*)]
            \item By the short exact sequence of left $B$-modules
            \begin{equation*}
                \ses{I}{B}{A}{}{},
            \end{equation*}
            $_BA$ has finite flat dimension if and only if $_BI$ has finite flat dimension. Hence (i) follows from Lemma~\ref{lem:ring_ext_gen_right_implies_gen_left}~(i).
            \item Similarly, by the short exact sequence of right $B$-modules
            \begin{equation*}
                \ses{I}{B}{A}{}{},
            \end{equation*}
            $A_B$ has finite projective dimension if and only if $I_B$ has finite projective dimension. Hence (ii) follows from Lemma~\ref{lem:ring_ext_gen_right_implies_gen_left}~(ii).
        \end{enumerate}
    \end{proof}
    
    \begin{eg} \label{eg:trivialextension}
        Lemma~\ref{lemma:nilpotentidealig} can be applied to trivial extension rings. In particular, let $S$ be a ring and $M$ be an $(S,S)$-bimodule. Let $R \coloneqq \triext{S}{M}$. Then $(0,M)$ is a nilpotent ideal of $R$ and $S \cong \quotient{R}{(0,M)}{}$. Thus if injectives generate for $S$ and ${_R(0,M)}$ has finite flat dimension then Lemma~\ref{lemma:nilpotentidealig}~(i) applies and injectives generate for $R$. Similarly, Lemma~\ref{lemma:nilpotentidealig}~(ii) applies if $(0,M)_R$ has finite projective dimension.
    \end{eg}

\subsection{Arrow removal} \label{subsection:arrowremoval}

Let $\Lambda \coloneqq \quotient{kQ}{I}{}$ be a path algebra with admissible ideal $I$. Let $\function{a}{v_e}{v_f}$ be an arrow of $Q$ which is not in a minimal generating set of $I$. Then Green, Psaroudakis and Solberg \cite[Section 4]{Green2018} define the algebra obtained from $\Lambda$ by removing the arrow $a$ as $\Gamma \coloneqq \quotient{\Lambda}{\Lambda \bar{a} \Lambda}{}$ where $\bar{a} = a + I$.

\begin{prop} \cite[Proposition 4.4~(iii)]{Green2018} \label{prop:solbergarrowremovaltrivialext}
    Let $\Lambda \coloneqq \quotient{kQ}{I}{}$ be an admissible quotient of the path algebra $kQ$ over a field $k$. Suppose that there are arrows $\function{a_i}{v_{e_i}}{v_{f_i}}$ in $Q$ for $i = 1,2,\dots,t$ which do not occur in a set of minimal generators of $I$ in $kQ$ and $\Hom{\Lambda}{}{e_i\Lambda}{}{f_j\Lambda} = 0$ for all $i$ and $j$ in $\{1,2,\dots,t\}$. Let $\bar{a_i} = a_i + I$ in $\Lambda$. Let $\Gamma = \quotient{\Lambda}{\Lambda \{\bar{a_i}\}_{i=1}^t \Lambda}{}$. Then $\Lambda$ is isomorphic to the trivial extension $\triext{\Gamma}{P}$, where $P = \oplus_{i=1}^t \tens{\Gamma e_i}{k}{f_i \Gamma}$.
\end{prop}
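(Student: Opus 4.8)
The plan is to realise the trivial extension $\triext{\Gamma}{P}$ concretely inside $\Lambda$, by splitting the canonical surjection $\pi\colon\Lambda\twoheadrightarrow\Gamma$ and identifying its kernel with $P$. Write $J\coloneqq\Lambda\{\bar{a_i}\}_{i=1}^{t}\Lambda$, so that $\Gamma=\Lambda/J$ by definition, let $Q'$ be the quiver obtained from $Q$ by deleting the arrows $a_1,\dots,a_t$, and call a path of $Q$ \emph{reduced} if its image in $\Lambda$ is nonzero; these images form a $k$-basis of $\Lambda$, and any subpath of a reduced path is again reduced because $I$ is admissible. I would establish three things: that $J^2=0$; that $\pi$ admits a ring-theoretic section $s\colon\Gamma\to\Lambda$; and that $J\cong P$ as $\Gamma$-bimodules. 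Given these, the section $s$ splits $\pi$ as $k$-modules, so $\Lambda=s(\Gamma)\oplus J$, and since $J^2=0$ the multiplication of $\Lambda$ is exactly that of the trivial extension $\triext{\Gamma}{J}$; combined with the third step this gives $\Lambda\cong\triext{\Gamma}{P}$ as rings.

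First I would show $J^2=0$. The space $J$ is the $k$-span of the reduced paths that meet at least one of the $a_i$, so $J^2$ is spanned by reduced paths meeting two, not necessarily distinct, of the $a_i$; any such path contains a subpath whose image lies in $f_j\Lambda e_i$ for some $i,j\in\{1,\dots,t\}$, and this image is nonzero since subpaths of reduced paths are reduced, contradicting $f_j\Lambda e_i\cong\Homgroup{\Lambda}{e_i\Lambda}{f_j\Lambda}=0$. Hence no reduced path meets two of the $a_i$, so $J^2=0$, $J$ is a $\Gamma$-bimodule, and $J=\bigoplus_{i=1}^{t}\Lambda\bar{a_i}\Lambda$ as an internal direct sum, because for distinct $i$ the summands $\Lambda\bar{a_i}\Lambda$ are spanned by disjoint sets of reduced paths.

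Next I would construct the section. By hypothesis $I$ has a minimal generating set $\rho$ in which none of the $a_i$ occurs, so $\rho\subseteq kQ'\subseteq kQ$. The composite $kQ'\hookrightarrow kQ\twoheadrightarrow\Lambda$ annihilates $\rho$, hence descends to a ring homomorphism out of $kQ'/\langle\rho\rangle_{kQ'}$, and applying the algebra projection $kQ\to kQ'$ that kills the $a_i$ one checks that this map is injective, that its source is canonically $\Gamma$, and that composing it with $\pi$ recovers $\mathrm{id}_\Gamma$. This is the required section $s$.

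The main obstacle is the third step, $J\cong P$. For each $i$, multiplication gives a surjective bimodule map $\Lambda e_i\otimes_k f_i\Lambda\to\Lambda\bar{a_i}\Lambda$, $x\otimes y\mapsto x\bar{a_i}y$, and since $\bar{a_i}\in J$ we have $(Je_i)\bar{a_i}\subseteq J^2=0$ and $\bar{a_i}(f_iJ)\subseteq J^2=0$, so it factors through a surjection $\Phi_i\colon\Gamma e_i\otimes_k f_i\Gamma\to\Lambda\bar{a_i}\Lambda$; summing over $i$ gives a $\Gamma$-bilinear surjection $\Phi\colon P\twoheadrightarrow J$. Injectivity of each $\Phi_i$ is exactly the point at which the hypothesis that $a_i$ occurs in no minimal generating set of $I$ is used: with $\rho\subseteq kQ'$ as above, running the noncommutative Buchberger procedure on $\rho$ yields a Gröbner basis of $I$, for any fixed admissible order, that still lies in $kQ'$, so all of its leading monomials avoid the $a_i$. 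Hence a leading monomial occurring as a subpath of a path $u\,a_i\,v$ that meets $a_i$ only once must lie inside $u$ or inside $v$; together with the second step this shows the reduced paths of $\Lambda$ meeting $a_i$ are precisely the products $\overline{u\,a_i\,v}$ with $\bar u$ a basis element of $\Gamma e_i$ and $\bar v$ a basis element of $f_i\Gamma$, each arising exactly once. Thus $\Phi_i$ carries the evident $k$-basis of $\Gamma e_i\otimes_k f_i\Gamma$ bijectively onto a $k$-basis of $\Lambda\bar{a_i}\Lambda$, so $\Phi_i$, and therefore $\Phi$, is an isomorphism, which would complete the proof.
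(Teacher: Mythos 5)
The paper does not actually prove this proposition: it is quoted verbatim from Green--Psaroudakis--Solberg \cite[Proposition 4.4~(iii)]{Green2018}, so there is no in-paper argument to compare yours against. Your strategy --- split the surjection $\pi\colon\Lambda\to\Gamma$ by a ring section landing in $kQ'$, show the kernel $J=\Lambda\{\bar a_i\}\Lambda$ squares to zero using $f_j\Lambda e_i\cong\Homgroup{\Lambda}{e_i\Lambda}{f_j\Lambda}=0$, and identify $J$ with $P$ via the multiplication maps $\Gamma e_i\otimes_k f_i\Gamma\to\Lambda\bar a_i\Lambda$ --- is the right one and is essentially the route of the cited source, with the Gr\"obner-basis step doing the real work for the hypothesis that the $a_i$ avoid a minimal generating set of $I$.

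One claim needs repair. You define a path to be reduced when its image in $\Lambda$ is nonzero and assert that these images form a $k$-basis of $\Lambda$; this is false for a general admissible ideal (if $I$ contains $p-q$ for parallel paths $p,q\notin I$, then $\bar p=\bar q\neq 0$). The places where you genuinely need a basis are the internal directness of $J=\bigoplus_i\Lambda\bar a_i\Lambda$ --- ``spanned by disjoint sets of reduced paths'' does not give directness without linear independence --- and the injectivity of $\Phi$. Both are rescued by the correct notion you introduce only in the last paragraph: the tip-reduced (normal) monomials with respect to the Gr\"obner basis $G\subseteq kQ'$ obtained by completing $\rho$. Since all tips of $G$ avoid the $a_i$, the normal monomials meeting some $a_i$ are exactly the $u\,a_i\,v$ with $u,v$ normal monomials of $Q'$, these are linearly independent in $\Lambda$ and disjoint over distinct $i$, and that is what makes each $\Phi_i$ injective and the sum direct. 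So the proof should be restated with ``reduced'' meaning ``normal with respect to $G$'' throughout; note also that $J^2=0$ needs no basis at all, since $\bar a_i\Lambda\bar a_j=\bar a_i(f_i\Lambda e_j)\bar a_j=0$ directly. With that adjustment the argument is complete.
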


The properties of $P$ in Proposition~\ref{prop:solbergarrowremovaltrivialext} satisfy the assumptions of both Lemma~\ref{lem:ring_ext_gen_right_implies_gen_left} and Lemma~\ref{lemma:nilpotentidealig}. Hence we can use this arrow removal technique for injective generation.

\begin{prop} 
    Let $\Lambda \coloneqq \quotient{kQ}{I}{}$ be an admissible quotient of the path algebra $kQ$ over a field $k$. Suppose that there are arrows $\function{a_i}{v_{e_i}}{v_{f_i}}$ in $Q$ for $i = 1,2,\dots,t$ which do not occur in a set of minimal generators of $I$ in $kQ$ and $\Hom{\Lambda}{}{e_i\Lambda}{}{f_j\Lambda} = 0$ for all $i$ and $j$ in $\{1,2,\dots,t\}$. Let $\bar{a_i} = a_i + I$ in $\Lambda$. Let $\Gamma = \quotient{\Lambda}{\Lambda \{\bar{a_i}\}_{i=1}^t \Lambda}{}$.
    \begin{enumerate}[label = (\roman*)]
        \item Injectives generate for $\Lambda$ if and only if injectives generate for $\Gamma$.
        \item Projectives cogenerate for $\Lambda$ if and only if projectives cogenerate for $\Gamma$.
    \end{enumerate}
\end{prop}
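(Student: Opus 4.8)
The plan is to feed Proposition~\ref{prop:solbergarrowremovaltrivialext} into the reduction results of Section~\ref{section:ringhomomorphisms}. That proposition writes $\Lambda$ as the trivial extension $\triext{\Gamma}{P}$ with $P = \oplus_{i=1}^{t} \tens{\Gamma e_i}{k}{f_i\Gamma}$, so I have both the ring homomorphism $\function{f}{\Gamma}{\Lambda}$, $b \mapsto (b,0)$, and the identification $\Gamma \cong \quotient{\Lambda}{(0,P)}{}$ with $(0,P)$ a square-zero, in particular nilpotent, ideal of $\Lambda$. I would obtain the implications ``injectives generate (resp.\ projectives cogenerate) for $\Lambda$ implies the same for $\Gamma$'' from Lemma~\ref{lem:ring_ext_gen_right_implies_gen_left} applied to $f$, and the converse implications from Lemma~\ref{lemma:nilpotentidealig} applied to the ideal $(0,P)$, exactly as in Example~\ref{eg:trivialextension}. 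It then remains to verify the hypotheses of those two lemmas for this particular $P$.

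For the implications towards $\Gamma$: since $k$ is a field, $\tens{\Gamma e_i}{k}{f_i\Gamma}$ restricts on the left to a finite direct sum of copies of the projective module $\Gamma e_i$ and on the right to a finite direct sum of copies of $f_i\Gamma$, so $_{\Gamma}\Lambda \cong {_{\Gamma}\Gamma} \oplus {_{\Gamma}P}$ and $\Lambda_{\Gamma} \cong \Gamma_{\Gamma} \oplus P_{\Gamma}$ are projective one-sided $\Gamma$-modules and hence have finite flat and finite projective dimension. Moreover $\Lambda_{\Gamma}$ contains $\Gamma_{\Gamma}$ as a direct summand, so it is a generator of $\Mod{\Gamma}$, and $(\cogenerator{\Lambda})_{\Gamma}$ contains $\cogenerator{\Gamma}$ as a direct summand, so it is a cogenerator of $\Mod{\Gamma}$; by Remark~\ref{remark:(co)generatorsprove(pc)ig} this makes $\res{\Gamma}{\Lambda}{\Mod{\Lambda}}$ both generate and cogenerate $\der{\Gamma}$. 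Lemma~\ref{lem:ring_ext_gen_right_implies_gen_left} then gives both implications; this half is essentially formal.

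The real content is the converse direction, which I expect to be the one step needing care: I must show $_{\Lambda}(0,P)$ has finite flat dimension as a left $\Lambda$-module and $(0,P)_{\Lambda}$ has finite projective dimension as a right $\Lambda$-module, and this is precisely where the hypothesis $\Hom{\Lambda}{}{e_i\Lambda}{}{f_j\Lambda} = 0$ is used. Since $(0,P)$ is square-zero, its left and right $\Lambda$-actions factor through $\Gamma$, and as $\Gamma$-modules $_{\Gamma}(0,P) \cong {_{\Gamma}P}$ is a finite direct sum of copies of the $\Gamma e_i$ while $(0,P)_{\Gamma} \cong P_{\Gamma}$ is a finite direct sum of copies of the $f_i\Gamma$. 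The hypothesis says exactly that $f_j\Lambda e_i = 0$ for all $i, j$, i.e.\ that there is no nonzero path from $v_{f_j}$ to $v_{e_i}$ in $\Lambda$; I would deduce from this that no nonzero path ending at $v_{e_i}$ can pass through any of the removed arrows $\bar{a_j}$ (such a path would contain a nonzero path from $v_{f_j}$ to $v_{e_i}$), so the ideal $\Lambda\{\bar{a_j}\}_{j=1}^{t}\Lambda$ meets $\Lambda e_i$ trivially and the canonical surjection $\Lambda e_i \to \Gamma e_i$ is an isomorphism; symmetrically $f_i\Lambda \to f_i\Gamma$ is an isomorphism. Hence, viewed as a left $\Lambda$-module, $(0,P)$ is a finite direct sum of copies of the projectives $\Lambda e_i$, and as a right $\Lambda$-module a finite direct sum of copies of the projectives $f_i\Lambda$; in particular both are projective. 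Lemma~\ref{lemma:nilpotentidealig} then supplies the two remaining implications. The combinatorial claim that the removed-arrow ideal does not meet $\Lambda e_i$ or $f_i\Lambda$ is the only delicate point; alternatively, the finiteness of the projective dimension of this ideal over $\Lambda$ could be extracted from \cite{Green2018}.
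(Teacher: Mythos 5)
Your proposal is correct and follows the same overall strategy as the paper: one direction via Lemma~\ref{lem:ring_ext_gen_right_implies_gen_left} applied to the inclusion $\Gamma \to \Lambda = \triext{\Gamma}{P}$, the other via Lemma~\ref{lemma:nilpotentidealig} (Example~\ref{eg:trivialextension}) applied to the square-zero ideal $(0,P)$, with the only substantive verification being that $P$ is projective as a one-sided $\Lambda$-module on each side. The single point where you diverge from the paper is how that verification is done. The paper tensors the bimodule sequence $0 \to P \to \Lambda \to \Gamma \to 0$ with $P$ over $\Gamma$ and invokes $\tens{P}{\Gamma}{P}=0$ from \cite[Proposition 4.6~(vii)]{Green2018} to conclude $_\Lambda P \cong \tens{\Lambda}{\Gamma}{P}$ and $P_\Lambda \cong \tens{P}{\Gamma}{\Lambda}$, which are projective because $P$ is projective over $\Gamma$ on each side. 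You instead argue combinatorially that the hypothesis $f_j\Lambda e_i = 0$ forces $\Lambda\{\bar a_j\}_j\Lambda\, e_i = 0$ and $f_i\,\Lambda\{\bar a_j\}_j\Lambda = 0$, so that $\Gamma e_i \cong \Lambda e_i$ and $f_i\Gamma \cong f_i\Lambda$ already as $\Lambda$-modules; this is correct (note that both vanishing statements are needed, and the second uses the hypothesis for the pair $(j,i)$ as well as $(i,j)$, which is covered since it is assumed for all $i,j$), and it recovers exactly the same identification of $_\Lambda P$ and $P_\Lambda$ as direct sums of the projectives $\Lambda e_i$ and $f_i\Lambda$. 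The two arguments are essentially equivalent — $\tens{P}{\Gamma}{P}=0$ is itself the statement $f_i\Gamma e_j = 0$ — so your version buys a self-contained, citation-free verification at the cost of a convention-sensitive path argument, while the paper's version outsources the combinatorics to \cite{Green2018}.
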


\begin{proof}
    Firstly, note that $P = \oplus_{i=1}^t \tens{\Gamma e_i}{k}{f_i \Gamma}$ is projective as both a left and right $\Gamma$-module. Moreover, $_\Gamma \Lambda _\Gamma$ is isomorphic to $\Gamma \oplus P$ as a $(\Gamma, \Gamma)$-bimodule. Hence $\Lambda$ is projective as both a left and right $\Gamma$-module. Consequently, Lemma~\ref{lem:ring_ext_gen_right_implies_gen_left} applies with $A = \Lambda$ and $B = \Gamma$. So if injectives generate for $\Lambda$ then injectives generate for $\Gamma$ and the equivalent statement for projective cogeneration also holds.
    
    Secondly, there exists a short exact sequence of $(\Lambda,\Lambda)$-bimodules
    \begin{equation*}
        \ses{P}{\Lambda}{\Gamma}{}{}.
    \end{equation*}
    Hence
    \begin{equation*}
        \tens{P}{\Gamma}{P} \rightarrow \tens{\Lambda}{\Gamma}{P} \rightarrow \tens{\Gamma}{\Gamma}{P} \rightarrow 0,
    \end{equation*}
    is an exact sequence of left $\Lambda$-modules. Moreover, by \cite[Proposition 4.6~(vii)]{Green2018}, $\tens{P}{\Gamma}{P} = 0$. Thus
    \begin{equation*}
         \tens{_\Lambda \Lambda}{\Gamma}{P} \cong  \tens{_\Lambda \Gamma}{\Gamma}{P} \cong {_\Lambda P},
    \end{equation*}
    so $_\Lambda P$ is a projective left $\Lambda$-module. Similarly,
    \begin{equation*}
        \tens{P}{\Gamma}{\Lambda_\Lambda} \cong \tens{P}{\Gamma}{\Gamma_\Lambda} \cong P_\Lambda,
    \end{equation*}
    so $P_\Lambda$ is a projective right $\Lambda$-module. Consequently, Lemma~\ref{lemma:nilpotentidealig} applies. In particular, Example~\ref{eg:trivialextension} applies with $R = \Lambda$, ${(0,{_SM_S})} = {(0,{_\Gamma P_\Gamma})} = {_\Lambda P_\Lambda}$ and $S = \quotient{R}{(0,M)}{} = \Gamma$. So if injectives generate for $\Gamma$ then injectives generate for $\Lambda$ and the similar statement holds for projective cogeneration.
\end{proof}

    \subsection{Frobenius extensions and almost excellent extensions}
    
    To prove the converse statement to Lemma~\ref{lem:ring_ext_gen_right_implies_gen_left} for Frobenius extensions and almost excellent extensions we prove a more general result connecting relatively $B$-injective $A$-modules to injective generation.
    
    \begin{defn}[Relatively projective/injective]
        Let $A$ and $B$ be rings with a ring homomorphism $\function{f}{B}{A}$. Let $M_A$ be an $A$-module.
        \begin{itemize}
            \item $M_A$ is relatively $B$-projective if $\Homgroup{A}{M}{-}$ is exact on $(A,B)$-exact sequences.
            \item $M_A$ is relatively $B$-injective if $\Homgroup{A}{-}{M}$ is exact on $(A,B)$-exact sequences.
        \end{itemize}
    \end{defn}
    
    Let $A$ and $B$ be rings with a ring homomorphism $\function{f}{B}{A}$. Then any injective $A$-module, $I$, is relatively $B$-injective since $\Homgroup{A}{-}{I}$ is exact on all short exact sequences of $A$-modules. Similarly, any projective $A$-module is relatively $B$-projective. However, for both Frobenius extensions and almost excellent extensions all projective $A$-modules are relatively $B$-injective. This property can be used to prove the converse statement to Lemma~\ref{lem:ring_ext_gen_right_implies_gen_left} for these ring extensions.
    
    \begin{lemma} \label{lem:ring_ext_relatively_proj_inj_generation}
        Let $A$ and $B$ be rings with a ring homomorphism $\function{f}{B}{A}$.
        \begin{enumerate}[label = (\roman*)]
            \item Suppose that $A_B$ is a finitely generated projective $B$-module and that all projective $A$-modules are relatively $B$-injective. If injectives generate for $B$ then injectives generate for $A$.
            \item Suppose that $_BA$ is a finitely generated projective $B$-module and that all injective $A$-modules are relatively $B$-projective. If projectives cogenerate for $B$ then projectives cogenerate for $A$.
        \end{enumerate}
    \end{lemma}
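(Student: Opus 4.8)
The plan is to exhibit the regular module $A_A$ as a direct summand of a complex that visibly lies in $\lsubinj{A}$, by pushing the injective generation of $B$ forward along the coinduction functor of $f$ and then exploiting the relative $B$-injectivity of $A_A$. Part (ii) will be the formal dual, using induction, products, and the injective cogenerator $\cogenerator{A}$.

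For part (i), I first record the properties of $\coind{B}{A}{}$. Since $A_B$ is finitely generated projective as a right $B$-module, $\coind{B}{A}{}$ is exact and preserves set indexed coproducts, so it extends to a coproduct-preserving triangle functor $\function{\coind{B}{A}{}}{\der{B}}{\der{A}}$; and since $\coind{B}{A}{}$ is right adjoint to the exact functor $\res{B}{A}{}$, it sends injective $B$-modules to injective $A$-modules. Assuming injectives generate for $B$, the class $\Inj{B}$ generates $\der{B}$, so Proposition~\ref{prop:imageoffunctor}~(i) (applied with $\cat{S} = \Inj{B}$ and $\cat{T} = \Inj{A}$) gives $\im{\coind{B}{A}{}} \subseteq \lsubinj{A}$. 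In particular $\coind{B}{A}{}(A_B)$ lies in $\lsubinj{A}$, even though $A_B$ itself need not be an injective $B$-module — this last point, that Proposition~\ref{prop:imageoffunctor} first controls the functor on the generating class of injectives and only afterwards is evaluated on a non-injective module, is one of the two things that make the argument work.

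The second ingredient is the standard fact from relative homological algebra that an $A$-module $M$ is relatively $B$-injective if and only if the unit $\function{\eta_M}{M}{\coind{B}{A}{}\res{B}{A}{}(M)}$ of the adjunction $(\res{B}{A}{}, \coind{B}{A}{})$ is a split monomorphism of $A$-modules: the triangle identity makes $\res{B}{A}{}(\eta_M)$ a split monomorphism of $B$-modules, hence the short exact sequence $0 \to M \to \coind{B}{A}{}\res{B}{A}{}(M) \to \coker{\eta_M} \to 0$ is $(A,B)$-exact, and applying the defining exactness property of relative $B$-injectivity of $M$ to this sequence splits $\eta_M$ over $A$. Applying this with $M = A_A$ (which is relatively $B$-injective by hypothesis) and using $\res{B}{A}{}(A_A) = A_B$ shows that $A_A$ is a direct summand of $\coind{B}{A}{}(A_B) \in \lsubinj{A}$. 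Since $\lsubinj{A}$ is closed under direct summands and $A_A$ generates $\der{A}$, we conclude $\lsubinj{A} = \der{A}$, i.e.\ injectives generate for $A$.

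Part (ii) is dual in every step: replace $\coind{B}{A}{}$ by the induction functor $\ind{B}{A}{}$, which is exact and preserves set indexed products because $_BA$ is finitely generated projective and which preserves projective modules because it is left adjoint to the exact functor $\res{B}{A}{}$; Proposition~\ref{prop:imageoffunctor}~(ii), using that projectives cogenerate for $B$, gives $\im{\ind{B}{A}{}} \subseteq \cosubproj{A}$; and the dual characterisation — $N_A$ is relatively $B$-projective iff the counit $\function{\varepsilon_N}{\ind{B}{A}{}\res{B}{A}{}(N)}{N}$ is a split epimorphism of $A$-modules — applied to the relatively $B$-projective injective cogenerator $\cogenerator{A}$ exhibits $\cogenerator{A}$ as a direct summand of an object of $\cosubproj{A}$, so $\cosubproj{A} = \der{A}$ since $\cogenerator{A}$ cogenerates $\der{A}$. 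The only real obstacle beyond bookkeeping is getting the relative-homological-algebra characterisation of relative (in)jectivity via splitting of the (co)unit pinned down cleanly; once that and the coproduct/product preservation of coinduction/induction are in place, everything else is an application of Proposition~\ref{prop:imageoffunctor} together with closure of (co)localising subcategories under direct summands.
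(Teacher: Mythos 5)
Your proof is correct and follows essentially the same route as the paper: push injective generation of $B$ forward along the exact, coproduct-preserving, injective-preserving functor $\coind{B}{A}{}$ via Proposition~\ref{prop:imageoffunctor}, then use relative $B$-injectivity to split a projective $A$-module off $\coind{B}{A}{}\circ\res{B}{A}{}$ of it (and dually for (ii)). The only cosmetic differences are that you apply the splitting just to the regular module $A_A$ (resp.\ to $\cogenerator{A}$) rather than to all projectives (resp.\ injectives), and that you prove the unit/counit splitting criterion directly where the paper cites Kadison; both are fine.
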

    
    \begin{proof}
        \begin{enumerate}[label = (\roman*)]
            \item Since $A_B$ is a finitely generated projective, $\coind{B}{A}{}$ is exact and preserves both set indexed coproducts and injective modules. Hence, if injectives generate for $B$, then $\im{\coind{B}{A}{}}$ is a subcategory of $\lsubinj{A}$ by Proposition~\ref{prop:imageoffunctor}. Let $P_A$ be a projective $A$-module. Since $P$ is relatively $B$-injective, $P$ is a direct summand of $\coind{B}{A}{} \circ \res{B}{A}{P}$, \cite[Section 4.1]{Kadison1999}. Thus $\Proj{A}$ is a subcategory of $\lsubinj{A}$ and injectives generate for $A$. 
            
            \item Similarly, if an injective $A$-module $I$ is relatively $B$-projective then $I$ is a direct summand of $\ind{B}{A}{} \circ \res{B}{A}{I}$, \cite[Section 4.1]{Kadison1999}. Moreover, $\ind{B}{A}{}$ is exact and preserves set indexed products as $_BA$ is a finitely generated projective. Thus, if projectives cogenerate for $B$ then the $\im{\ind{B}{A}{}}$ is a subcategory of $\cosubproj{A}$, by Proposition~\ref{prop:imageoffunctor}, and $\Inj{A}$ is a subcategory of $\cosubproj{A}$.
        \end{enumerate}
    
    \end{proof}
    
    \begin{eg} \label{eg:relativelyinjectivefrobeniusandalmostexcellentextensions}
        Lemma~\ref{lem:ring_ext_relatively_proj_inj_generation} applies to both Frobenius extensions and almost excellent extensions.
    
        \begin{itemize}
            \item \underline{Frobenius extensions.}
        
            Let $A$ and $B$ be rings such that $A$ is a Frobenius extension of $B$. Let $M_A$ be an $A$-module. Then $M_A$ is relatively $B$-injective if and only if $M_A$ is relatively $B$-projective, \cite[Proposition 4.1]{Kadison1999}.
            
            \item \underline{Almost excellent extensions}
            
            Recall that, if $A$ is an almost excellent extension of $B$, then $A$ is right $B$-projective. Hence every $A$-module is both relatively $B$-injective and relatively $B$-projective, \cite[Lemma 1.1]{Xue1996}.
        \end{itemize}
    \end{eg}

\section{Recollements of derived module categories} \label{section:recollements}

Recollements of triangulated categories were first introduced by Be\u{\i}linson, Bernstein and Deligne \cite{Beuilinson1982} to study stratifications of derived categories of sheaves. Recall the definition of a recollement of derived module categories.

\begin{defn}[Recollement]
	Let $A$, $B$ and $C$ be rings. A recollement is a diagram of six triangle functors as in Figure~\ref{fig:recollement} such that the following hold:
	\begin{enumerate}[label=(\roman*)]
		\item The composition $j^* \circ i_*=0$.
		\item All of the pairs $(i^*,i_*)$, $(i_*,i^!)$, $(j_!,j^*)$ and $(j^*,j_*)$ are adjoint pairs of functors.
		\item The functors $i_*$, $j_!$ and $j_*$ are fully faithful.
		\item For all $X \in \der{A}$ there exist triangles:
		\begin{align*}
		    \tri{j_!j^*X}{X}{i_*i^*X}{}{}{}
		    \\
		    \tri{i_*i^!X}{X}{j_*j^*X}{}{}{}
		\end{align*}
	\end{enumerate}
\end{defn}

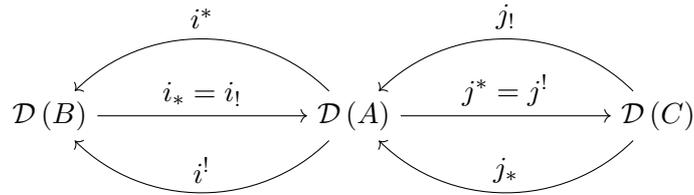
\begin{figure}[H]
\begin{center}
	\begin{tikzpicture}
		\node(a) at (-4.5,0) {};
		\node (t') at (-4,0) {$\der{B}$};
		\node (t) at (0,0) {$\der{A}$};
		\node (t'') at (4,0) {$\der{C}$};
		
		\draw[->] (t') -- node[midway,above] {$i_* = i_!$} (t);
		\draw[->] (t) -- node[midway,above] {$j^*=j^!$} (t'');
		\draw[->] (t) to [bend right = 45] node[midway,above] {$i^*$} (t');
		\draw[->] (t) to [bend left = 45] node[midway,above] {$i^!$} (t');
		\draw[->] (t'') to [bend right = 45] node[midway,above] {$j_!$} (t);
		\draw[->] (t'') to [bend left = 45] node[midway,above] {$j_*$} (t);		
	\end{tikzpicture}	
\end{center}
\caption{Recollement of derived categories $(R)$}
\label{fig:recollement}
\end{figure}

We denote a recollement of the form in Figure~\ref{fig:recollement} as $(R) = (\der{B},\der{A},\der{C})$. If a recollement $(R)$ exists then the properties of $A$, $B$ and $C$ are often related. This allows one to prove properties about $A$ using the usually simpler $B$ and $C$. Such a method has been exploited by Happel \cite[Theorem 2]{Happel1993} and Chen and Xi \cite[Theorem 1.1]{Chen2017} to prove various statements about the finitistic dimension of rings that appear in recollements. These results apply to recollements $(R)$ which restrict to recollements on derived categories with various boundedness conditions. In this section we say a recollement $(R)$ restricts to a recollement $(R^*)$ for $* \in \{-,+,b\}$ if the six functors of $(R)$ restrict to functors on the essential image of $\derstarnone{}$ in $\cat{D}(\text{Mod})$. Note that such a restriction is not always possible, however in \cite[Section 4]{Huegel2017} there are necessary and sufficient conditions for $(R)$ to restrict to a recollement $(R^-)$ or $(R^b)$. In Proposition~\ref{prop:recollementbdedbelowconditions} we provide analogous conditions for $(R)$ to restrict to a recollement $(R^+)$.

\begin{eg} \label{eg:triangularmatrixring}
    One example of a recollement of unbounded derived module categories can be defined using triangular matrix rings. Let $B$ and $C$ be rings and $_CM_B$ a $(C,B)$-bimodule. Then the triangular matrix ring is defined as
    \begin{equation*}
        A \coloneqq \twobytwo{C}{_CM_B}{0}{B}.
    \end{equation*}
    In this situation $A$, $B$ and $C$ define a recollement $(R)$. The functors of $(R)$ are defined using idempotents of $A$. Let 
    \begin{equation*}
        e_1 \coloneqq \twobytwo{1}{0}{0}{0}, \hspace{2mm} e_2 \coloneqq \twobytwo{0}{0}{0}{1}.
    \end{equation*}
    Then the recollement is defined by $j_! = \ltens{-}{C}{e_1A}$ and $i_* = \ltens{-}{B}{e_2A}$.
\end{eg}

This section includes results about the dependence of $A$, $B$ and $C$ on each other with regards to `injectives generate' and `projectives cogenerate' statements. Theorem~\ref{thm:recollementsbcimplya} is a summary of the results in this section which use properties of $B$ and $C$ to prove generation statements about $A$.

\begin{thm} \label{thm:recollementsbcimplya}
    Let $(R)$ be a recollement. 
    \begin{enumerate}[label = (\roman*)]
        \item Suppose that injectives generate for both $B$ and $C$. If one of the following conditions holds then injectives generate for $A$.
        \begin{enumerate}[label = (\alph*)]
            \item The recollement $(R)$ is in a ladder of recollements with height greater than or equal to $2$. [Proposition~\ref{prop:recollementsextonedown}]
            \item The recollement $(R)$ restricts to a bounded below recollement $(R^+)$.  [Proposition~\ref{prop:recollementbdedbelow}]
            \item The recollement $(R)$ restricts to a bounded above recollement $(R^-)$ and $A$ is a finite dimensional algebra over a field. [Proposition~\ref{prop:recollementsbdedabove}]
        \end{enumerate}
    
        \item Suppose that projectives cogenerate for both $B$ and $C$. If one of the following conditions holds then projectives cogenerate for $A$.
        \begin{enumerate}[label = (\alph*)]
            \item The recollement $(R)$ is in a ladder of recollements with height greater than or equal to $2$. [Proposition~\ref{prop:recollementsextonedown}]
            \item The recollement $(R)$ restricts to a bounded above recollement $(R^-)$. [Proposition~\ref{prop:recollementsbdedabove}]
            \item The recollement $(R)$ restricts to a bounded below recollement $(R^+)$ and $A$ is a finite dimensional algebra over a field. [Proposition~\ref{prop:recollementbdedbelow}]
        \end{enumerate}
    \end{enumerate}
\end{thm}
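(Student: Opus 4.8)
The plan is to treat all six sub-statements uniformly by exploiting the six recollement functors and the two defining triangles, together with the machinery already developed: Proposition~\ref{prop:imageoffunctor} (images of generators land in the generated (co)localising subcategory) and Lemma~\ref{lemma:adjointfunctorspreserve} (adjoints preserve boundedness properties of complexes). The common strategy for the ``injectives generate'' halves is as follows. Since $i_*$ is fully faithful with left adjoint $i^*$ and right adjoint $i^!$, and $j_! , j^*, j_*$ fit into the two triangles, every $X \in \der{A}$ sits in a triangle $\tri{j_!j^*X}{X}{i_*i^*X}{}{}{}$; hence it suffices to show that $i_*(\der{B})$ and $j_!(\der{C})$ are both contained in $\lsubinj{A}$, because $\lsubinj{A}$ is triangulated and then contains every $X$. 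For the $i_*$ part I would show that $i_*$ preserves set-indexed coproducts (it has a right adjoint $i^!$ when the recollement extends, or one argues directly) and sends injective $B$-modules, or at least the injective cogenerator $\cogenerator{B}$, into $\lsubinj{A}$; then Proposition~\ref{prop:imageoffunctor}(i) applied to the generating class $\cat{S} = \Inj{B}$ (valid since injectives generate for $B$) gives $i_*(\der{B}) \subseteq \lsubinj{A}$. The $j_!$ part is analogous using that injectives generate for $C$, that $j_!$ preserves coproducts (it is a left adjoint), and that $j_!$ sends the relevant injective/generating complexes of $\der{C}$ into $\lsubinj{A}$. The ``projectives cogenerate'' halves are the formal dual, using $j_*$ and $i_*$, the second triangle $\tri{i_*i^!X}{X}{j_*j^*X}{}{}{}$, products instead of coproducts, and Proposition~\ref{prop:imageoffunctor}(ii).

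The three cases (a), (b), (c) differ only in \emph{why} the functors send injectives (resp. bounded complexes of injectives) where we need them. For (a), a ladder of height $\ge 2$ supplies an extra adjoint to one of the existing functors, which by Lemma~\ref{lemma:adjointfunctorspreserve} forces the relevant functor among $i_*, i^*, i^!, j_!, j^*, j_*$ to preserve compactness or boundedness in cohomology, and dualising through the adjunctions then shows e.g. $i^!$ or $j^*$ restricts suitably; feeding this into the argument above closes the case. For (b) and (c), the hypothesis that $(R)$ restricts to $(R^+)$ (resp. $(R^-)$) means all six functors preserve bounded-below (resp. bounded-above) complexes; combined with Proposition~\ref{prop:recollementbdedbelowconditions} / the analogous characterisation and Lemma~\ref{lemma:adjointfunctorspreserve}, one extracts that $i_*$ and $j_!$ (resp. $j_*$ and $i_*$) preserve bounded complexes of injectives (resp. projectives), which is exactly what Proposition~\ref{prop:imageoffunctor} consumes. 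The finite-dimensional-algebra hypothesis in (c) (resp. the dual clause in (ii)(c)) is used to pass between ``bounded above'' and ``bounded below'' information — over a finite-dimensional algebra the $k$-dual swaps the two boundedness conditions and interchanges injectives with projectives, so a bounded-above restriction yields the bounded-below-type conclusion needed for the injective-generation direction.

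The main obstacle I anticipate is verifying, in each case, that the relevant functor genuinely sends \emph{injective} $B$- or $C$-modules (or the injective cogenerator) into a complex that is quasi-isomorphic to a bounded complex of injective $A$-modules, rather than merely into $\der{A}$. This is where the boundedness hypotheses and the ladder/restriction characterisations do the real work: a priori $i_*$ of an injective need not be a bounded complex of injectives, and without control of this the image could escape $\lsubinj{A}$. So the crux is the chain of implications in Lemma~\ref{lemma:adjointfunctorspreserve} — e.g. ``$G$ preserves bounded complexes of injectives $\Rightarrow$ $F$ preserves boundedness'' and its converses — applied to the correct adjoint pair among the recollement functors; once that bookkeeping is done, the triangle decomposition and Proposition~\ref{prop:imageoffunctor} finish each part mechanically. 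I would also need to check the mild point that $i_*$ and $j_!$ (resp. $j_*$) preserve the coproducts (resp. products) required by Proposition~\ref{prop:imageoffunctor}; for $j_!$ and $j_*$ this is automatic from being one-sided adjoints, and for $i_*$ it follows since $i_* = i_!$ has both adjoints in a recollement, hence preserves both coproducts and products.
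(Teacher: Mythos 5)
Your global strategy coincides with the paper's: reduce everything to showing that $\im{i_*}$ and $\im{j_!}$ both lie in $\lsubinj{A}$, conclude via the triangle $\tri{j_!j^*X}{X}{i_*i^*X}{}{}{}$, and let the three hypotheses (a)--(c) differ only in why $i_*$ preserves bounded complexes of injectives (resp.\ projectives). The $i_*$ half of your argument is exactly Proposition~\ref{prop:recollementi*}. However, the step you dismiss as ``analogous'' --- that $j_!$ sends injective $C$-modules into $\lsubinj{A}$ --- is the real crux, and the mechanism you offer cannot deliver it. The functor $j_!$ is a \emph{left} adjoint: by Table~\ref{table:recollementsfunctorprop} and Lemma~\ref{lemma:adjointfunctorspreserve} it preserves bounded complexes of \emph{projectives}, and there is no a priori reason for $j_!(I)$ to lie in $\lsubinj{A}$ when $I$ is injective. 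Lemma~\ref{lemma:adjointfunctorspreserve} cannot be massaged into giving this, since it only transfers boundedness-in-cohomology type properties across adjunctions. The paper's resolution (Proposition~\ref{prop:recollementimi*}) is a specific trick absent from your proposal: for $I$ a bounded complex of injective $C$-modules, $j_*(I)$ \emph{is} a bounded complex of injective $A$-modules, and the triangle
\begin{equation*}
    \tri{j_!j^*j_*(I)}{j_*(I)}{i_*i^*j_*(I)}{}{}{}
\end{equation*}
together with $j_!j^*j_* \cong j_!$ (full faithfulness of $j_*$) and the already-established containment $\im{i_*} \subseteq \lsubinj{A}$ forces $j_!(I) \in \lsubinj{A}$; only then does Proposition~\ref{prop:imageoffunctor} yield $\im{j_!} \subseteq \lsubinj{A}$. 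Without this step your decomposition does not close, so this is a genuine gap rather than routine bookkeeping.

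Two smaller divergences. In case (c) you propose using $k$-duality to convert bounded-above information into bounded-below information; the paper instead argues that for $A$ finite dimensional the condition ``$i_*$ preserves bounded complexes of projectives'' upgrades to ``$i_*$ preserves compact objects'' (Proposition~\ref{prop:recollementsbdedaboveconditions}), whence the recollement extends one step downwards (Proposition~\ref{Huegel2017:recollementclassificationext}) and the ladder case applies. Your duality idea would replace the recollement by one of opposite algebras with $B$ and $C$ interchanged and would need separate justification, so it is not a drop-in substitute. Likewise, in the ladder case the paper does not extract a boundedness property of the original $i_*$ at all for the injective statement: it passes to the bottom recollement of the ladder, in which $j_*$ plays the role of $i_*$ and visibly preserves bounded complexes of injectives, and then invokes Proposition~\ref{prop:recollementi*} there. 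Your sketch of (a) should be made precise along these lines.
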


To prove Theorem~\ref{thm:recollementsbcimplya} we require some technical results which we state and prove now. We prove these results by using the fact there are four pairs of adjoint functors in a recollement. The results in Table~\ref{table:recollementsfunctorprop} follow from the definition of a recollement, standard properties of adjoint functors, and the results of Section~\ref{section:localisingsubcategory}. Recall that for brevity, we shall often abuse terminology by writing, for example, that a functor preserves bounded complexes of projectives when we mean that it preserves the property of being quasi-isomorphic to a bounded complex of projectives.

\setlength\tabcolsep{5pt}

\begin{table}[H]
\begin{center}
	\begin{tabular}{|c|c|}
	    \hline
	    \textbf{Property preserved} & \textbf{Functors with this property}
	    \\
	    \hline
    	Products & $i_*$, $i^!$, $j^*$, $j_*$.
		\\
		Coproducts & $i^*,i_*,j_!,j^*$.
		\\
		Compact objects & $i^*,j_!$.
		\\
		Complexes bounded in cohomology & $i_*,j^*$.
		\\
		Complexes bounded above in cohomology & $i^*,i_*,j_!,j^*$.
		\\
		Complexes bounded below in cohomology & $i_*,i^!,j^*,j_*$.
		\\
		Bounded complexes of projectives & $i^*,j_!$.
		\\
		Bounded complexes of injectives & $i^!,j_*$. 
		\\
		\hline
	\end{tabular}
\end{center}
\caption{Properties of the triangle functors in a recollement}
\label{table:recollementsfunctorprop}
\end{table}

\begin{lemma} \label{lemma:recollementj*}
    Let $(R)$ be a recollement.
    \begin{enumerate}[label = (\roman*)]
        \item If $j^*$ preserves bounded complexes of injectives and injectives generate for $A$ then injectives generate for $C$.
        \item If $j^*$ preserves bounded complexes of projectives and projectives cogenerate for $A$ then projectives cogenerate for $C$.
    \end{enumerate}
\end{lemma}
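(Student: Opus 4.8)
The plan is to apply Proposition~\ref{prop:imageoffunctor} to the functor $j^*$, exploiting that $\Inj{A}$ generates $\der{A}$ (respectively $\Proj{A}$ cogenerates $\der{A}$) and that $j^*$ is essentially surjective, so that control of $j^*$ on (in)jective modules is enough to control it on all of $\der{A}$ and hence to fill up all of $\der{C}$.

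For part (i), I would first note that, since injectives generate for $A$, the class $\Inj{A}$ generates $\der{A}$. The functor $j^*$ is left adjoint to $j_*$, hence preserves set indexed coproducts (see Table~\ref{table:recollementsfunctorprop}). Any injective $A$-module, viewed as a stalk complex, is a bounded complex of injectives, so by hypothesis $j^*(I)$ is quasi-isomorphic to a bounded complex of injective $C$-modules for every $I \in \Inj{A}$; since $\Inj{C} \subseteq \lsubinj{C}$ and a bounded complex all of whose terms lie in a localising subcategory itself lies in that subcategory (the standard fact recalled in Subsection~\ref{subsection:(co)localisingsubcategories}), we get $j^*(I) \in \lsubinj{C}$. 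Proposition~\ref{prop:imageoffunctor}~(i), applied with $\cat{S} = \Inj{A}$, $F = j^*$ and $\cat{T} = \Inj{C}$, then yields $\im{j^*} \subseteq \lsubinj{C}$.

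It remains to identify $\im{j^*}$ with $\der{C}$. In a recollement $j_!$ is fully faithful, so the unit of the adjunction $(j_!, j^*)$ is a natural isomorphism $\mathrm{id}_{\der{C}} \cong j^* j_!$; hence every $Y \in \der{C}$ is isomorphic to $j^*(j_! Y)$ and so lies in $\im{j^*}$. Therefore $\der{C} = \im{j^*} \subseteq \lsubinj{C}$, i.e.\ injectives generate for $C$. Part (ii) is entirely dual: $j^*$ is right adjoint to $j_!$ and so preserves set indexed products, a projective $A$-module is a bounded complex of projectives so by hypothesis $j^*$ sends it into $\cosubproj{C}$, and the assumption that projectives cogenerate for $A$ together with Proposition~\ref{prop:imageoffunctor}~(ii) and the essential surjectivity of $j^*$ gives $\der{C} = \cosubproj{C}$.

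The argument is short and I do not expect a genuine obstacle; the only two points that warrant a moment's care are that a single module counts as a bounded complex of (in)jectives for the purposes of the boundedness hypothesis, and that $j^*$ is essentially surjective precisely because $j_!$ (equivalently $j_*$) is fully faithful.
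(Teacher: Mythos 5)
Your proof is correct and follows the same route as the paper: apply Proposition~\ref{prop:imageoffunctor} to $j^*$ (which preserves coproducts, respectively products) to get $\im{j^*} \subseteq \lsubinj{C}$ (respectively $\cosubproj{C}$), and then use that $j^*$ is essentially surjective because $j_!$ is fully faithful. The extra care you take in unpacking why $j^*(I)\in\lsubinj{C}$ for an injective module $I$ is exactly the content the paper leaves implicit.
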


\begin{proof}
    \begin{enumerate}[label = (\roman*)]
        \item Suppose that injectives generate for $A$. Since $j^*$ preserves bounded complexes of injectives and set indexed coproducts, $\im{j^*}$ is a subcategory of $\lsubinj{C}$ by Proposition~\ref{prop:imageoffunctor}. Furthermore, $j^*$ is essentially surjective as it is right adjoint to $j_!$, which is fully faithful. Hence injectives generate for $C$. 
        
        \item The proof of the second statement is similar.
    \end{enumerate}
\end{proof}

\begin{prop} \label{prop:recollementimi*}
    Let $(R)$ be a recollement.
    \begin{enumerate}[label = (\roman*)]
        \item If $\im{i_*}$ is a subcategory of $ \lsubinj{A}$ and injectives generate for $C$ then injectives generate for $A$.
        \item If $\im{i_*}$ is a subcategory of $\cosubproj{A}$ and projectives cogenerate for $C$ then projectives cogenerate for $A$.
    \end{enumerate}
\end{prop}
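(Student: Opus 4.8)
The plan is to derive both statements from the two canonical triangles in the definition of a recollement, together with Proposition~\ref{prop:imageoffunctor} and the properties recorded in Table~\ref{table:recollementsfunctorprop}. The crux, for part~(i), will be to show that $\im{j_!}$ is contained in $\lsubinj{A}$, and to reduce this inclusion to the injective modules of $C$.

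For~(i), let $X \in \der{A}$ and consider the triangle $j_!j^*X \to X \to i_*i^*X \to (j_!j^*X)[1]$. The third term lies in $\im{i_*}$, hence in $\lsubinj{A}$ by hypothesis, so as $\lsubinj{A}$ is a triangulated subcategory it is enough to prove $\im{j_!} \subseteq \lsubinj{A}$. Since injectives generate for $C$, the class $\Inj{C}$ generates $\der{C}$, and $j_!$ preserves set indexed coproducts by Table~\ref{table:recollementsfunctorprop}; so by Proposition~\ref{prop:imageoffunctor}~(i) it suffices to check that $j_!(I) \in \lsubinj{A}$ for every injective $C$-module $I$.

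To see this I would feed $j_!(I)$ into the \emph{other} recollement triangle $i_*i^!(j_!I) \to j_!I \to j_*j^*(j_!I) \to (i_*i^!(j_!I))[1]$. Because $j_!$ is fully faithful, the unit $\mathrm{id} \to j^*j_!$ is an isomorphism, so $j^*(j_!I) \cong I$ and the triangle becomes $i_*i^!(j_!I) \to j_!I \to j_*I \to (i_*i^!(j_!I))[1]$. Its outer term lies in $\im{i_*} \subseteq \lsubinj{A}$, and $j_*I$ is quasi-isomorphic to a bounded complex of injective $A$-modules because $j_*$ preserves bounded complexes of injectives (Table~\ref{table:recollementsfunctorprop}); hence $j_*I \in \lsubinj{A}$ by \cite[Proposition 2.1]{Rickard2019}. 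Therefore $j_!I \in \lsubinj{A}$, which finishes~(i). Part~(ii) is the formal dual: for $X \in \der{A}$ use the triangle $i_*i^!X \to X \to j_*j^*X \to (i_*i^!X)[1]$, note $i_*i^!X \in \im{i_*} \subseteq \cosubproj{A}$, reduce by Proposition~\ref{prop:imageoffunctor}~(ii) (using that $j_*$ preserves products and that projectives cogenerate for $C$) to showing $j_*(P) \in \cosubproj{A}$ for projective $C$-modules $P$, and then feed $j_*P$ into the triangle $j_!j^*(j_*P) \to j_*P \to i_*i^*(j_*P) \to (j_!j^*(j_*P))[1]$; since $j^*j_* \cong \mathrm{id}$ (full faithfulness of $j_*$) this reads $j_!P \to j_*P \to i_*i^*(j_*P) \to (j_!P)[1]$, where the third term lies in $\im{i_*} \subseteq \cosubproj{A}$ and $j_!P$ is quasi-isomorphic to a bounded complex of projective $A$-modules.

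I expect the only step requiring an idea to be the reduction of ``$\im{j_!} \subseteq \lsubinj{A}$'' to the injective $C$-modules: the trick is to route $j_!I$ through the second triangle so that the problematic vertex becomes $j_*I$, which is then visibly (up to quasi-isomorphism, since $j_*$ preserves the property) a bounded complex of injectives. Everything else is adjunction identities and the properties already tabulated in Table~\ref{table:recollementsfunctorprop}.
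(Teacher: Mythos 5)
Your proposal is correct and follows essentially the same strategy as the paper: reduce to showing $j_!(I)\in\lsubinj{A}$ for injective $C$-modules $I$ via Proposition~\ref{prop:imageoffunctor}, and obtain this by relating $j_!(I)$ to $j_*(I)$ (a bounded complex of injectives) modulo a term in $\im{i_*}$. The only cosmetic difference is that you apply the second recollement triangle to $j_!(I)$ while the paper applies the first triangle to $j_*(I)$ and uses $j_!j^*j_*(I)\cong j_!(I)$; the two computations are interchangeable.
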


\begin{proof}
    \begin{enumerate}[label = (\roman*)]
        \item Suppose that $\im{i_*}$ is a subcategory of $\lsubinj{A}$. Let $I \in \der{C}$ be a bounded complex of injectives. Consider the triangle,
        \begin{equation} \label{tri:recollementsimageofi*}
            \tri{j_!j^*(j_*(I))}{j_*(I)}{i_*i^*(j_*(I))}{}{}{}.
        \end{equation}
        Since $j_*$ preserves bounded complexes of injectives, $j_*(I)$ is in $\lsubinj{A}$. Hence triangle~\ref{tri:recollementsimageofi*} implies that $j_!j^*(j_*(I))$ is in $\lsubinj{A}$. Recall that $j_*$ is fully faithful so $j_!j^*j_*(I) \cong j_!(I)$. Thus $j_!$ maps bounded complexes of injectives to $\lsubinj{A}$. 
        
        Suppose that injectives generate for $C$. Then, by Proposition~\ref{prop:imageoffunctor}, $\im{j_!}$ is a subcategory of $\lsubinj{A}$. Since $\im{i_*}$ and $\im{j_!}$ are both subcategories of $\lsubinj{A}$, for all $X \in \der{A}$ both $i_*i^*(X)$ and $j_!j^*(X)$ are in $\lsubinj{A}$. Hence $X$ is in $\lsubinj{A}$ by the triangle
        \begin{equation*}
            \tri{j_!j^*(X)}{X}{i_*i^*(X)}{}{}{}.
        \end{equation*}
        Thus injectives generate for $A$.
        
        \item The second result follows similarly.
    \end{enumerate}
\end{proof}

\begin{prop} \label{prop:recollementi*}
    Let $(R)$ be a recollement. 
    \begin{enumerate}[label = (\roman*)]
        \item Suppose that $i_*$ preserves bounded complexes of injectives.
        \begin{enumerate}
            \item If injectives generate for both $B$ and $C$ then injectives generate for $A$.
            \item If injectives generate for $A$ then injectives generate for $C$.
        \end{enumerate}
        \item Suppose that $i_*$ preserves bounded complexes of projectives.
        \begin{enumerate}
            \item If projectives cogenerate for both $B$ and $C$ then projectives cogenerate for $A$.
            \item If projectives cogenerate for $A$ then projectives cogenerate for $C$.
        \end{enumerate}
    \end{enumerate}
\end{prop}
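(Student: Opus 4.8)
**Proposal for the proof of Proposition~\ref{prop:recollementi*}.**

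The plan is to reduce both parts of Proposition~\ref{prop:recollementi*} to the results already available, namely Proposition~\ref{prop:recollementimi*} and Lemma~\ref{lemma:recollementj*}, by observing that the hypothesis "$i_*$ preserves bounded complexes of injectives" does two useful things at once. On one hand, it directly feeds the image-of-$i_*$ hypothesis of Proposition~\ref{prop:recollementimi*}; on the other, via the adjunction $(i_*, i^!)$ together with Lemma~\ref{lemma:adjointfunctorspreserve}, it gives control over $j^*$ that lets us invoke Lemma~\ref{lemma:recollementj*}. So the whole proof should be a short chain of citations with no new homological algebra.

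For part (i)(a): first I would show that if $i_*$ preserves bounded complexes of injectives and injectives generate for $B$, then $\im{i_*}$ is a subcategory of $\lsubinj{A}$. This is exactly Proposition~\ref{prop:imageoffunctor}~(i) applied to $F = i_*$: the functor $i_*$ preserves set indexed coproducts (it equals $i_!$, and this is recorded in Table~\ref{table:recollementsfunctorprop}), and it sends the injective generators of $\der{B}$ into $\lsubinj{A}$ by hypothesis; since the injectives generate $\der{B}$, the image of $i_*$ lands in $\lsubinj{A}$. Then part (i)(a) is immediate from Proposition~\ref{prop:recollementimi*}~(i), using that injectives generate for $C$.

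For part (i)(b): the task is to get from "$i_*$ preserves bounded complexes of injectives" to "$j^*$ preserves bounded complexes of injectives," after which Lemma~\ref{lemma:recollementj*}~(i) finishes the job. Here is where I expect the only real content to sit. Since $(i_*, i^!)$ is an adjoint pair and $i_*$ preserves bounded complexes of injectives, Lemma~\ref{lemma:adjointfunctorspreserve}~(iv) tells us that $i_*$ — playing the role of "$G$" in the statement, with "$F$" being its left adjoint $i^*$ — forces $i^*$ to preserve complexes bounded (above or below) in cohomology; and then Lemma~\ref{lemma:adjointfunctorspreserve}~(ii), applied to the adjoint pair $(i^*, i_*)$ with $i^*$ now known to preserve compactness (Table~\ref{table:recollementsfunctorprop}), gives that $i_*$ preserves complexes bounded in cohomology (alternatively this is already in Table~\ref{table:recollementsfunctorprop}). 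Hmm — more directly, from $i_*$ preserving bounded complexes of injectives we want the symmetric statement for $j_*$, or a boundedness statement for $j^*$; the cleaner route is: $i_*$ preserves bounded complexes of injectives $\Rightarrow$ (Lemma~\ref{lemma:adjointfunctorspreserve}~(iv), via $(i^*,i_*)$) $i^*$ preserves complexes bounded in cohomology $\Rightarrow$ (Lemma~\ref{lemma:adjointfunctorspreserve}~(v), via $(j_!, j^*)$ after first noting $i^*$ controls the relevant vanishing) that $j^*$ preserves bounded complexes of projectives, and dually that $j_*$ preserves bounded complexes of injectives. Actually the slick observation is that $j^*$ preserving bounded complexes of injectives follows because, in the triangle $\tri{i_*i^!X}{X}{j_*j^*X}{}{}{}$ with $X = i_*(\text{injective bdd cx of }B)$... no. The robust argument: use Lemma~\ref{lemma:adjointfunctorspreserve} on the pair $(j^*, j_*)$ — if $j_*$ preserves bounded complexes of injectives then $j^*$ preserves complexes bounded in cohomology, hence sends a bounded complex of injectives to something bounded in cohomology; to upgrade to "bounded complex of injectives" one uses Lemma~\ref{lemma:adjointfunctorspreserve}~(iii) with the pair $(i_*, i^!)$ knowing $i_*$ preserves complexes bounded in cohomology, to conclude $i^!$ preserves bounded complexes of injectives, and then the triangle $\tri{i_*i^!X}{X}{j_*j^*X}{}{}{}$ shows $j_*j^*X$, and hence (since $j_*$ is fully faithful with $j^*$ its left adjoint — careful, $j^*$ is the left adjoint of $j_*$) $j^*X$ is a bounded complex of injectives. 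So the key step — and the main obstacle — is assembling the correct sequence of adjunction bookkeeping from Lemma~\ref{lemma:adjointfunctorspreserve} and the defining triangles of the recollement to transfer "preserves bounded complexes of injectives" from $i_*$ to $j^*$; once that is in hand, part (i)(b) is Lemma~\ref{lemma:recollementj*}~(i), and part (ii) follows by the dual argument throughout, replacing injectives by projectives, coproducts by products, $\lsub{}{}$ by $\cosub{}{}$, and Lemma~\ref{lemma:recollementj*}~(i) and Proposition~\ref{prop:recollementimi*}~(i) by their part-(ii) counterparts.
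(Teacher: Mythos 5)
Part (i)(a) of your proposal is correct and is exactly the paper's argument (Proposition~\ref{prop:imageoffunctor} applied to $i_*$, then Proposition~\ref{prop:recollementimi*}), and the overall strategy for (i)(b) --- transfer ``preserves bounded complexes of injectives'' from $i_*$ to $j^*$ and quote Lemma~\ref{lemma:recollementj*} --- is also the paper's. But the argument you finally settle on for that transfer has a genuine gap. You correctly observe that for $X \in \der{A}$ a bounded complex of injectives, the triangle $\tri{i_*i^!(X)}{X}{j_*j^*(X)}{}{}{}$ shows $j_*j^*(X)$ is quasi-isomorphic to a bounded complex of injectives (since $i^!$ and, by hypothesis, $i_*$ preserve them). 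The unjustified step is the last one: ``hence $j^*(X)$ is a bounded complex of injectives.'' Full faithfulness of $j_*$ does not reflect this property. To verify the criterion of Lemma~\ref{lemma:homgroupcriteriabded}~(ii) for $j^*(X)$ you must show $\dHom{C}{Z}{j^*(X)[n]} \cong \dHom{A}{j_!(Z)}{X[n]}$ vanishes for all but finitely many $n$ when $Z$ is bounded in cohomology, and that requires $j_!(Z)$ to be bounded in cohomology --- which is precisely the missing ingredient, since a priori $j_!$ only preserves boundedness above. (Your intermediate claim that $i^*$ preserving complexes bounded in cohomology yields, ``via $(j_!,j^*)$'', that $j^*$ preserves bounded complexes of projectives is also a non sequitur: the adjunctions involving $i^*$ say nothing about $j^*$.)

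The paper closes exactly this hole, using a fact you actually derived and then discarded: by Lemma~\ref{lemma:adjointfunctorspreserve}~(iv) the hypothesis on $i_*$ forces $i^*$ to preserve complexes bounded below in cohomology. Then for $X \in \der{C}$ bounded below in cohomology, the triangle $\tri{j_!j^*j_*(X)}{j_*(X)}{i_*i^*j_*(X)}{}{}{}$ together with $j_!j^*j_*(X) \cong j_!(X)$ shows that $j_!(X)$ is bounded below in cohomology, since $j_*$, $i^*$ and $i_*$ all preserve that property. Combined with the automatic preservation of boundedness above, $j_!$ preserves complexes bounded in cohomology, so $j^*$ preserves bounded complexes of injectives by Lemma~\ref{lemma:adjointfunctorspreserve}~(iii), and Lemma~\ref{lemma:recollementj*} finishes. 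You should replace your final ``robust argument'' with this triangle argument; the dualisation for part (ii) then goes through as you describe.
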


\begin{proof}
    \begin{enumerate}[label = (\roman*)]
        \item We prove the first two statements as the other two follow similarly.
        \begin{enumerate}[label = (\alph*)]
            \item Suppose that injectives generate for $B$. Since $i_*$ preserves bounded complexes of injectives and set indexed coproducts, by Proposition~\ref{prop:imageoffunctor}, $\im{i_*}$ is a subcategory of $\lsubinj{A}$. Thus, if injectives generate for $C$, then injectives generate for $A$ by Proposition~\ref{prop:recollementimi*}.
            
            \item We claim that $j^*$ also preserves bounded complexes of injectives. Since $j^*$ preserves complexes bounded in cohomology, by Lemma~\ref{lemma:adjointfunctorspreserve}, $j_!$ preserves complexes bounded above in cohomology and $j_*$ preserves complexes bounded below in cohomology. Furthermore, since $i_*$ preserves bounded complexes of injectives, by Lemma~\ref{lemma:adjointfunctorspreserve}, $i^*$ preserves complexes bounded below in cohomology.
            Let $X \in \der{C}$ be bounded below in cohomology and consider the triangle
            \begin{equation*}
                \begin{tikzcd}[ampersand replacement=\&]
                    j_!j^*(j_*(X)) \arrow[r,""] \arrow[d, "\cong"] \&
                      j_*(X) \arrow[r,""] \arrow[d, "="] \&
                        i_*i^*(j_*(X)) \arrow[r,""] \arrow[d, "="] \&
                          j_!j^*(j_*(X))[1] \arrow[d, "\cong"] 
                    \\
                    j_!(X) \arrow[r,""] \&
                      j_*(X) \arrow[r,""] \&
                        i_*i^*(j_*(X)) \arrow[r,""] \&
                          j_!(X)[1].
                \end{tikzcd}
            \end{equation*}
            Since $i_*$, $i^*$ and $j_*$ all preserve complexes bounded below in cohomology, by the triangle, $j_!$ also preserves complexes bounded below in cohomology. Hence $j_!$ preserves both complexes bounded above and bounded below in cohomology. Thus $j_!$ preserves complexes bounded in cohomology and $j^*$ preserves bounded complexes of injectives, by Lemma~\ref{lemma:adjointfunctorspreserve}. Hence the statement follows immediately from Lemma~\ref{lemma:recollementj*}.
        \end{enumerate}
        \item This follows similarly to above.
    \end{enumerate}

\end{proof}

\begin{lemma} \label{lem:recollementsi*andi!}
    Let $(R)$ be a recollement.
    \begin{enumerate}[label= (\roman*)]
        \item Suppose that injectives generate for $A$. Then injectives generate for $B$ if one of the following two conditions holds:
        \begin{enumerate}
            \item $i^!$ preserves set indexed coproducts.
            \item $i^*(\Inj{A})$ is a subcategory of $\lsubinj{B}$.
        \end{enumerate}
        
        \item Suppose that projectives cogenerate for $A$. Then projectives cogenerate for $B$ if one of the following two conditions holds:
        \begin{enumerate}
            \item $i^*$ preserves set indexed products.
            \item $i^!(\Proj{A})$ is a subcategory of $\cosubproj{B}$.
        \end{enumerate}
    \end{enumerate}
\end{lemma}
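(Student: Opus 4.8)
The statement concerns Lemma~\ref{lem:recollementsi*andi!}: given a recollement $(R)$ with injectives generating for $A$, I want to conclude injectives generate for $B$ under either condition (a) or (b) (and dually for projectives cogenerating). The key observation is that the functor $i^*$ is essentially surjective onto $\der{B}$, since it is left adjoint to the fully faithful $i_*$; so it suffices to show $\im{i^*}$ is contained in $\lsubinj{B}$, and by Proposition~\ref{prop:imageoffunctor} this reduces to showing $i^*(\Inj{A}) \subseteq \lsubinj{B}$ together with $i^*$ preserving set indexed coproducts. From Table~\ref{table:recollementsfunctorprop}, $i^*$ always preserves coproducts, so the only thing at issue is the behaviour of $i^*$ on injective $A$-modules. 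Under hypothesis (b) this is given outright. So the real content is deriving (b)-type information from hypothesis (a).

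First I would handle condition (a), that $i^!$ preserves set indexed coproducts. The idea is to feed this into Lemma~\ref{lemma:adjointfunctorspreserve}: since $(i_*, i^!)$ is an adjoint pair and $i^!$ preserves coproducts, part~(i) of that lemma gives that $i_*$ preserves compact objects. But a cleaner route is to note that if $i^!$ preserves coproducts then, because $i^!$ is right adjoint to $i_*$ and $i_*$ is already known to preserve coproducts (Table~\ref{table:recollementsfunctorprop}), the adjoint pair $(i_*, i^!)$ behaves like one between compactly generated categories in a way that lets us transport the generation property directly. Concretely: for any injective $A$-module $J$, I want $i^*(J) \in \lsubinj{B}$. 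Since injectives generate for $A$, we have $\der{A} = \lsubinj{A}$; applying $i^*$ (which preserves coproducts and triangles) sends $\lsubinj{A}$ into $\lsub{B}{i^*(\Inj{A})}$. So it would suffice to show $i^*(\Inj{A}) \subseteq \lsubinj{B}$ — which is circular unless (a) gives us a handle. The handle is this: when $i^!$ preserves coproducts, the composite $i^! i_*$ is the identity (as $i_*$ is fully faithful) and one shows that the localising subcategory of $\der{B}$ generated by $i^*(\Inj{A})$ must be all of $\der{B}$ by using that $i^!$ takes injective cogenerators of $\der{A}$ close to injective objects of $\der{B}$, together with the triangle $\tri{i_*i^!X}{X}{j_*j^*X}{}{}{}$ applied to $X$ an injective $A$-module.

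The core computation I anticipate is the following: let $J$ be an injective $A$-module. The second recollement triangle gives $\tri{i_*i^!(J)}{J}{j_*j^*(J)}{}{}{}$. Since $J$ is injective (hence a bounded complex of injectives), $j_*j^*(J)$ and $i_*i^!(J)$ lie in $\lsubinj{A}$ by Table~\ref{table:recollementsfunctorprop} (as $i^!$ and $j_*$ preserve bounded complexes of injectives and $i_*, j_*$ preserve coproducts, although one must be slightly careful since $i_*$ preserving bounded complexes of injectives is not listed — only $i^!$ and $j_*$ are). Rather, I would argue: apply $i^*$ to $J$; from the triangle $\tri{j_!j^*(X)}{X}{i_*i^*(X)}{}{}{}$ with $X = J$ and the fact $j^* i_* = 0$ doesn't directly help, so instead I use adjunction-theoretic facts — $i^* i_* \cong \mathrm{id}_{\der{B}}$, so $i^*$ is a retraction. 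Then $\der{B} = i^*(\der{A}) = i^*(\lsubinj{A}) \subseteq \lsub{B}{i^*(\Inj{A})}$, and the task is to show $i^*(J)$ is built from injective $B$-modules. Under (b) this is the hypothesis; under (a), I would show condition (a) implies condition (b) by using that $i^!$ preserving coproducts forces $i_*$ to preserve bounded complexes of injectives (via Lemma~\ref{lemma:adjointfunctorspreserve}~(i) giving $i_*$ preserves compacts, then (ii) and onward), after which an injective $A$-module $J$, decomposed along the triangle with $i_* i^!$ and $j_* j^*$, yields $i^*(J)$ as a summand-type object in $\lsubinj{B}$ because $i^* j_* = 0$ would give $i^*(J) \cong i^* i_* i^!(J) \cong i^!(J)$, and $i^!$ preserves bounded complexes of injectives (Table).

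**Main obstacle.** The delicate point is establishing, under hypothesis (a), that $i^*(J)$ is quasi-isomorphic to a (bounded) complex of injective $B$-modules for every injective $A$-module $J$. The clean identity $i^* j_* = 0$ (which follows formally in a recollement, dual to $j^* i_* = 0$) combined with the triangle $\tri{i_*i^!(J)}{J}{j_*j^*(J)}{}{}{}$ should give $i^*(J) \cong i^* i_* i^!(J) \cong i^!(J)$ in $\der{B}$, and then I invoke that $i^!$ preserves bounded complexes of injectives (Table~\ref{table:recollementsfunctorprop}) — but this last fact does not require hypothesis (a) at all, so I must double-check why (a) is needed: it is needed precisely so that $i^*$, applied to the coproduct-closed class $\lsubinj{A}$, lands inside a \emph{localising} (coproduct-closed) subcategory, i.e.\ to ensure $i^!$ itself preserves the coproducts used in building $\lsubinj{A}$ from $\Inj{A}$. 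So the argument is: $i^!$ preserves coproducts $\Rightarrow$ $i^!(\der{A}) = i^!(\lsubinj{A}) \subseteq \lsub{B}{i^!(\Inj{A})} \subseteq \lsubinj{B}$ since $i^!(\Inj{A}) \subseteq \Inj{B}$-up-to-bounded-complexes by the Table; and then essential surjectivity of $i^!$ (it is right adjoint to the fully faithful $i_*$, so $i^! i_* \cong \mathrm{id}$) closes it. I would present both (a) and (b) cases in parallel, noting (b) is the direct route and (a) reduces to (b).

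\begin{proof}
We prove (i); part (ii) is dual, exchanging coproducts with products, $\lsubinj{}$ with $\cosubproj{}$, $\Inj{}$ with $\Proj{}$, and the pair $(i^*, i_*)$ with $(i_*, i^!)$.

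Suppose first that condition (b) holds, so $i^*(\Inj{A})$ is a subcategory of $\lsubinj{B}$. By Table~\ref{table:recollementsfunctorprop}, $i^*$ preserves set indexed coproducts. Since injectives generate for $A$, the class $\Inj{A}$ generates $\der{A}$, so by Proposition~\ref{prop:imageoffunctor}~(i), $\im{i^*}$ is a subcategory of $\lsubinj{B}$. As $i^*$ is left adjoint to the fully faithful functor $i_*$, it is essentially surjective, so $\der{B} = \im{i^*}$ is a subcategory of $\lsubinj{B}$. Hence injectives generate for $B$.

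Now suppose condition (a) holds, so $i^!$ preserves set indexed coproducts. We show that this implies condition (b), which completes the proof. By Table~\ref{table:recollementsfunctorprop}, $i^!$ preserves bounded complexes of injectives, so $i^!(\Inj{A})$ is a subcategory of the bounded complexes of injectives in $\der{B}$, and in particular of $\lsubinj{B}$. Since $i^!$ preserves set indexed coproducts and triangles, and injectives generate for $A$, Proposition~\ref{prop:imageoffunctor}~(i) gives that $\im{i^!}$ is a subcategory of $\lsubinj{B}$. It remains to relate $i^*$ and $i^!$ on injective $A$-modules. Let $J$ be an injective $A$-module. In a recollement the composite $i^* \circ j_* = 0$ (this is the statement dual to $j^* \circ i_* = 0$, obtained by adjunction). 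Applying $i^*$ to the triangle
\begin{equation*}
    \tri{i_*i^!(J)}{J}{j_*j^*(J)}{}{}{}
\end{equation*}
and using $i^* j_* = 0$ together with $i^* i_* \cong \mathrm{id}_{\der{B}}$ (as $i_*$ is fully faithful), we obtain $i^*(J) \cong i^!(J)$ in $\der{B}$. Therefore $i^*(\Inj{A})$ is a subcategory of $\im{i^!}$, hence of $\lsubinj{B}$, so condition (b) holds and injectives generate for $B$.
\end{proof}
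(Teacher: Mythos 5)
Your treatment of condition (b) is exactly the paper's argument: $i^*$ preserves coproducts, Proposition~\ref{prop:imageoffunctor} puts $\im{i^*}$ inside $\lsubinj{B}$, and essential surjectivity of $i^*$ (left adjoint to the fully faithful $i_*$) finishes. Your treatment of condition (a) also starts correctly: $i^!$ preserves bounded complexes of injectives and, by hypothesis, coproducts, so Proposition~\ref{prop:imageoffunctor} gives $\im{i^!}\subseteq\lsubinj{B}$. But the step you then take to ``reduce (a) to (b)'' rests on a false identity. In a recollement the composites that vanish by adjunction from $j^*\circ i_*=0$ are $i^*\circ j_!=0$ and $i^!\circ j_*=0$; the composite $i^*\circ j_*$ is \emph{not} zero in general --- it is precisely the gluing functor $\der{C}\to\der{B}$ that carries the nontrivial data of the recollement (for the triangular matrix ring of Example~\ref{eg:triangularmatrixring} it is controlled by the bimodule $M$, and if it always vanished every recollement would split). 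Consequently, applying $i^*$ to the triangle $\tri{i_*i^!(J)}{J}{j_*j^*(J)}{}{}{}$ yields a triangle $i^!(J)\to i^*(J)\to i^*j_*j^*(J)\to i^!(J)[1]$ whose third term need not vanish, so the claimed isomorphism $i^*(J)\cong i^!(J)$ does not follow, and condition (b) is not derived.

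The lemma is nonetheless salvaged by deleting the detour: you already have $\im{i^!}\subseteq\lsubinj{B}$, and since $i_*$ is fully faithful the counit gives $i^!i_*\cong\mathrm{id}_{\der{B}}$, so $i^!$ is essentially surjective and $\der{B}=\im{i^!}\subseteq\lsubinj{B}$. This is the paper's proof (it observes that both $i^*$ and $i^!$ are essentially surjective and that each hypothesis forces the image of the corresponding functor into $\lsubinj{B}$ via Proposition~\ref{prop:imageoffunctor}), and it is also the route you sketched in your own plan before the written proof veered off. The dual part (ii) goes through the same way once this correction is made.
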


\begin{proof}
    \begin{enumerate}[label = (\roman*)]
        \item Since $i_*$ is fully faithful both $i^*$ and $i^!$ are essentially surjective. Hence, if $\im{i^*}$ or $\im{i^!}$ is a subcategory of $\lsubinj{B}$, then injectives generate for $B$. The two statements are sufficient conditions for this to happen using Proposition~\ref{prop:imageoffunctor}.
        
        \item The proof is similar for the second statement.
    \end{enumerate}
\end{proof}

\subsection{Ladders of recollements}

A ladder of recollements, \cite[Section 3]{Huegel2017}, is a collection of finitely or infinitely many rows of triangle functors between $\der{A}$, $\der{B}$ and $\der{C}$, of the form given in Figure~\ref{fig:recollementladder}, such that any three consecutive rows form a recollement. The height of a ladder is the number of distinct recollements it contains.

\begin{figure}[H]
\begin{center}
	\begin{tikzpicture}
		\node(a) at (-4,0) {};
		\node (t') at (-3.5,0) {$\der{B}$};
		\node (t) at (0,0) {$\der{A}$};
		\node (t'') at (3.5,0) {$\der{C}$};
		
		\draw[->] (- 2.5,0) -- node[midway,above] {$i_n$} (-1,0);
		\draw[->] (1,0) -- node[midway,above] {$j_n$} ( 2.5,0);
		\draw[<-] (- 2.5,0.5) to [bend left = 15] node[midway,above] {$j_{n-1}$} (-1,0.5);
		\draw[<-] (1,0.5) to [bend left = 15] node[midway,above] {$i_{n-1}$} ( 2.5,0.5);
		\draw[<-] (- 2.5,-0.5) to [bend right = 15] node[midway,above] {$j_{n+1}$} (-1,-0.5);
		\draw[<-] (1,-0.5) to [bend right = 15] node[midway,above] {$i_{n+1}$} ( 2.5,-0.5);	
		\draw[->] (- 2.5,1) to [bend left = 40] node[midway,above] {$i_{n-2}$} (-1,1);
		\draw[->] (- 2.5,-1) to [bend right = 40] node[midway,above] {$i_{n+2}$} (-1,-1);
		\draw[->] (1,-1) to [bend right = 40] node[midway,above] {$j_{n+2}$} ( 2.5,-1);
		\draw[->] (1,1) to [bend left = 40] node[midway,above] {$j_{n-2}$} ( 2.5,1);
		
		\draw[dotted,thick] (-2,2) -- (-2,2.35);
		\draw[dotted,thick] (-2,-2) -- (-2,-2.35);
		\draw[dotted,thick] (2,2) -- (2,2.35);
		\draw[dotted,thick] (2,-2) -- (2,-2.35);
		\end{tikzpicture}	
\end{center}
\caption{Ladder of recollements}
\label{fig:recollementladder}
\end{figure}
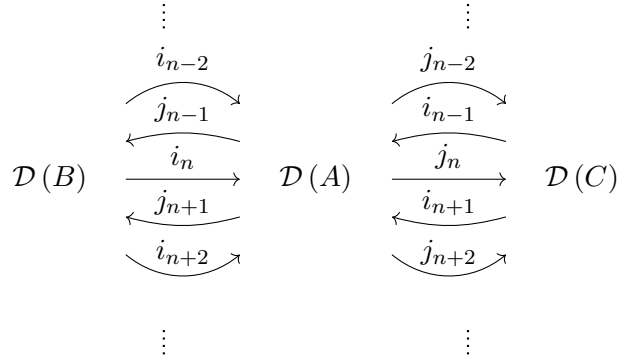

\begin{prop} \cite[Proposition 3.2]{Huegel2017} \label{Huegel2017:recollementclassificationext}
    Let $(R)$ be a recollement.
    \begin{enumerate}[label = (\roman*)]
        \item The recollement $(R)$ can be extended one step downwards if and only if $j_*$ (equivalently $i^!$) has a right adjoint. This occurs exactly when $j^*$ (equivalently $i_*$) preserves compact objects.
        \item The recollement $(R)$ can be extended one step upwards if and only if $j_!$ (equivalently $i^*$) has a left adjoint. If $A$ is a finite dimensional algebra over a field this occurs exactly when $j_!$ (equivalently $i^*$) preserves bounded complexes of finitely generated modules.
    \end{enumerate}
\end{prop}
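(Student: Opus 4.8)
The plan is to prove each part as a chain of equivalences, isolating the purely categorical statement ``the relevant one-sided adjoint exists'' from its reformulation in terms of preserving compact objects (part~(i)) or bounded complexes of finitely generated modules (part~(ii)). Throughout I would use that $\der{A}$, $\der{B}$ and $\der{C}$ are compactly generated, being derived categories of rings, and that $A_A$ is a compact generator of $\der{A}$, as already used in Section~\ref{section:localisingsubcategory}.

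For part~(i), I would first record the categorical equivalences. Since $\der{C}$ is compactly generated, Neeman's Brown representability theorem says that $j_*$ has a right adjoint if and only if $j_*$ preserves set indexed coproducts; and for the adjoint pair $(j^*, j_*)$ between compactly generated categories, $j_*$ preserves coproducts if and only if $j^*$ preserves compact objects. Applying the same reasoning to $(i_*, i^!)$ gives that $i^!$ has a right adjoint if and only if $i^!$ preserves coproducts, which holds if and only if $i_*$ preserves compact objects. To link the $i$-side to the $j$-side I would evaluate the recollement triangle $\tri{j_!j^*A}{A}{i_*i^*A}{}{}{}$ at the compact generator $A$, using two observations valid in any such recollement: $i^*$ and $j_!$ preserve compact objects (because $i_* \dashv i^!$ and $j^* \dashv j_*$ force $i_*$ and $j^*$ to preserve coproducts), and $j_!$ reflects compactness, being a fully faithful left adjoint. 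If $i_*$ preserves compacts then $i_*i^*A$, hence $j_!j^*A$, hence $j^*A$ is compact, and since the compact objects of $\der{A}$ form the thick closure of $A$ this forces $j^*$ to preserve every compact object; conversely, if $j^*$ preserves compacts then $i_*i^*A$ is compact, and as $i^*A$ is a compact generator of $\der{B}$ (here I would use Remark~\ref{remark:cohomologyindicators} and the faithfulness of $i_*$) whose thick closure is the subcategory $\der{B}^{c}$ of compacts, the triangle functor $i_*$ maps it into $\der{A}^{c}$. It then remains to observe that ``$(R)$ can be extended one step downwards'' means by definition the existence of right adjoints to $i^!$ and $j_*$, so once these are available one checks that the three lowest rows form a recollement: the new defining triangles and vanishing condition follow from the triangles of $(R)$ by applying the new adjunctions (for instance $i^!j_* = 0$ already follows from the triangle $\tri{i_*i^!Y}{Y}{j_*j^*Y}{}{}{}$ with $Y$ in the image of $j_*$), and the required full faithfulness is inherited from that of $j_*$ and $i_*$.

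Part~(ii) is the ``upward'' mirror image, concerning left adjoints to the top functors $i^*$ and $j_!$. I would run the dual argument: since $A$, $B$ and $C$ are rings, $\der{A}$ satisfies Brown representability for the dual, so $i^*$ (respectively $j_!$) has a left adjoint if and only if it preserves set indexed products, and the two sides are linked via the triangle $\tri{i_*i^!X}{X}{j_*j^*X}{}{}{}$ together with the fact that $i^!$ and $j_*$ always preserve products. For the refinement when $A$ is a finite dimensional algebra over a field, I would deduce the equivalence with ``$j_!$ (equivalently $i^*$) preserves bounded complexes of finitely generated modules'' from the product-preservation criterion together with the fact that, for a finite dimensional algebra, the bounded derived category of finitely generated modules admits an intrinsic description inside the unbounded derived category, so that a product-preserving triangle functor between such categories restricts compatibly to these bounded subcategories, and conversely; the equality of the $j_!$-side and the $i^*$-side again comes from the recollement triangle, this time run inside bounded derived categories of finitely generated modules, where the $k$-duality $D = \Homgroup{k}{-}{k}$ can be used to pass between the two.

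The step I expect to be the genuine obstacle is the concrete reformulation in part~(ii) for finite dimensional algebras: converting ``$j_!$ has a left adjoint'' into ``$j_!$ preserves bounded complexes of finitely generated modules'' is exactly where finite-dimensionality is indispensable and where one cannot simply quote a representability theorem, since it requires identifying which objects play the role of the compacts on the ``dual'' side. Invoking Brown representability for the dual in part~(ii) is also a point where one uses that the categories in question are derived categories of rings rather than arbitrary compactly generated triangulated categories. By contrast, the verification that the enlarged diagrams are again recollements, and all the ``one side equals the other side'' reductions, are standard manipulations with the two recollement triangles and the adjunctions.
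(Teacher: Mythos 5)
First, a point of reference: the paper offers no proof of this proposition at all — it is imported verbatim from \cite[Proposition 3.2]{Huegel2017} — so you are reconstructing an external result rather than matching an argument in the text. With that said, your part~(i) is essentially the standard (and correct) argument: Brown representability converts existence of right adjoints of $j_*$ and $i^!$ into preservation of coproducts, which for the adjoint pairs $(j^*,j_*)$ and $(i_*,i^!)$ between compactly generated categories is equivalent to $j^*$, respectively $i_*$, preserving compact objects; and the two sides are correctly linked by evaluating the triangle $\tri{j_!j^*A}{A}{i_*i^*A}{}{}{}$ at the compact generator $A$, using that the compacts of $\der{A}$ are $\mathrm{thick}(A)$, that the compacts of $\der{B}$ are the thick closure of the compact generator $i^*A$, and that the fully faithful, coproduct-preserving $j_!$ reflects compactness. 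The one place you are too quick is the closing verification that the enlarged diagram is again a recollement: full faithfulness of the \emph{new} bottom functor $\der{C}\rightarrow\der{A}$ is not ``inherited from $j_*$ and $i_*$'' (it is a new functor), and the second defining triangle of the lower recollement has to be constructed; both follow from the standard Bousfield (co)localisation formalism of Be\u{\i}linson--Bernstein--Deligne, but they are not free.

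The genuine gap is in part~(ii), and you have located it yourself: the equivalence, for $A$ a finite dimensional algebra over a field, between ``$j_!$ has a left adjoint'' and ``$j_!$ preserves bounded complexes of finitely generated modules'' is asserted rather than proved. The appeal to an ``intrinsic description'' of $\mathcal{D}^b(\mathrm{mod})$ inside the unbounded derived category, with product-preserving functors ``restricting compatibly\dots and conversely,'' is not an argument. The forward direction can in fact be done softly (a left adjoint $\lambda\dashv j_!$ satisfies $\cohomology{n}{j_!X}\cong\dHom{C}{\lambda(A)}{X[n]}$ with $\lambda(A)$ compact, which bounds and finitely generates the cohomology of $j_!X$), but the converse — preservation of $\mathcal{D}^b(\mathrm{mod})$ implies existence of a left adjoint — cannot be obtained by any restriction argument: in \cite{Huegel2017} it is proved by realising $j_!$ as $\ltens{-}{C}{Y}$ for the bimodule complex $Y=j_!(C)$, using the hypothesis to place $Y_A$ in $\mathcal{D}^b(\mathrm{mod}\text{-}A)$, and then exhibiting an explicit left adjoint by means of the $k$-duality $D=\Homgroup{k}{-}{k}$ applied to $Y$. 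Without that construction, your part~(ii) establishes only the purely categorical equivalences (upward extension $\Leftrightarrow$ left adjoints exist $\Leftrightarrow$ $i^*$ and $j_!$ preserve set indexed products), not the finite-dimensional criterion that the paper actually uses in Proposition~\ref{prop:recollementbdedbelowconditions}.
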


If the recollement $(R)$ can be extended one step downwards then we have a recollement $(R_\downarrow)$ as in Figure~\ref{fig:recollementextdown}.

\begin{figure}[H]
\begin{center}
	\begin{tikzpicture}
		\node(a) at (-4.5,0) {};
		\node (t') at (-4,0) {$\der{B}$};
		\node (t) at (0,0) {$\der{A}$};
		\node (t'') at (4,0) {$\der{C}$};
		\node(l) at (4.5,1) {Left};
		\node(r) at (4.5,-1) {Right};
		
		\draw[->] (t') to [bend left = 15]  node[midway,above] {$i_* = i_!$} (t);
		\draw[->] (t) to [bend left = 15] node[midway,above] {$j^*=j^!$} (t'');
		\draw[->] (t) to [bend right = 60] node[midway,above] {$i^*$} (t');
		\draw[->] (t) to [bend left = 15] node[midway,below] {$i^!$} (t');
		\draw[->] (t'') to [bend right = 60] node[midway,above] {$j_!$} (t);
		\draw[->] (t'') to [bend left = 15] node[midway,below] {$j_*$} (t);
		\draw[->] (t') to [bend right = 60] node[midway,below] {{$i_\downarrow$}} (t);
		\draw[->] (t) to [bend right = 60] node[midway,below] {{$j^\downarrow$}} (t'');
	\end{tikzpicture}	
\end{center}
\caption{Recollement of derived categories extended one step downwards $(R_\downarrow)$}
\label{fig:recollementextdown}
\end{figure}
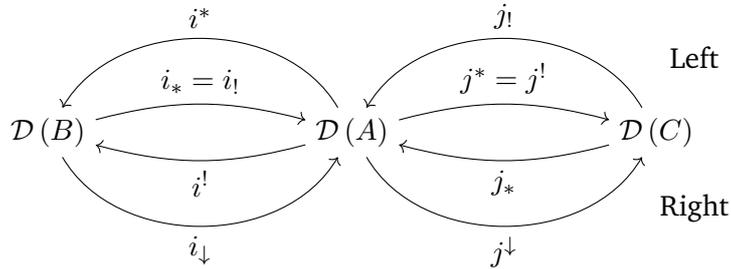

\begin{eg} 
    As seen in Example~\ref{eg:triangularmatrixring} a triangular matrix ring defines a recollement $(R)$. Moreover, this recollement extends one step downwards, \cite[Example 3.4]{Huegel2017}. Recall that $i_* \coloneqq \ltens{-}{B}{e_2A}$ where $e_2$ is an idempotent of  $A$. In particular, note that $e_2A_A$ is a finitely generated projective $A$-module so $i_*$ preserves compact objects. Thus we can apply Proposition~\ref{Huegel2017:recollementclassificationext} to show that $(R)$ extends one step downwards. 
\end{eg}
	
\begin{prop} \label{prop:recollementsextonedown}
Let $(R)$ be the top recollement in a ladder of height $2$.
\begin{enumerate}[label = (\roman*)]
    \item If injectives generate for both $B$ and $C$ then injectives generate for $A$.
    \item If injectives generate for $A$ then injectives generate for $B$.
    \item If projectives cogenerate for both $B$ and $C$ then projectives cogenerate for $A$.
    \item If projectives cogenerate for $A$ then projectives cogenerate for $C$.
\end{enumerate}
\end{prop}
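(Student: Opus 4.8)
The strategy is to combine the machinery of Lemma~\ref{lemma:recollementj*}, Proposition~\ref{prop:recollementi*} and Lemma~\ref{lem:recollementsi*andi!} with the extra structure that comes from $(R)$, being the top recollement of a ladder of height $2$, admitting an extension one step downwards. So the first thing I would do is record the consequences of this downward extension. By Proposition~\ref{Huegel2017:recollementclassificationext}~(i) the functor $i^!$ (equivalently $j_*$) acquires a right adjoint $i_\downarrow$, and hence preserves set indexed coproducts, while $i_*$ (equivalently $j^*$) preserves compact objects. Moreover, the enlarged diagram of Figure~\ref{fig:recollementextdown} contains a second recollement $(R_\downarrow)$, with the same middle term $\der{A}$ but with the two outer categories interchanged; thus $(R_\downarrow) = (\der{C}, \der{A}, \der{B})$, and under the standard conventions its functors are $j^*, j_*, j^\downarrow$ on the $\der{A}$--$\der{C}$ side and $i_*, i^!, i_\downarrow$ on the $\der{A}$--$\der{B}$ side, with $j_*$ and $i^!$ occupying the middle row. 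Reading Table~\ref{table:recollementsfunctorprop} off for $(R_\downarrow)$ then gives, in addition to what that table already records about $(R)$, that $i_*$ preserves bounded complexes of projectives (it plays the role of $j_!$ in $(R_\downarrow)$) and that $j^*$ preserves bounded complexes of projectives (it plays the role of $i^*$ in $(R_\downarrow)$). Alternatively, these two facts follow directly: by Lemma~\ref{lemma:adjointfunctorspreserve}~(ii) applied to the adjoint pairs $(i_*, i^!)$ and $(j^*, j_*)$ the functors $i^!$ and $j_*$ preserve complexes bounded in cohomology, and then Lemma~\ref{lemma:adjointfunctorspreserve}~(v) applied to the same pairs yields the claim.

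With these preparations, each of the four statements reduces to one citation. For~(i) I would apply Proposition~\ref{prop:recollementi*}~(i)(a) to the recollement $(R_\downarrow)$: its $i_*$ is $j_*$, which preserves bounded complexes of injectives by Table~\ref{table:recollementsfunctorprop}, so if injectives generate for the two outer categories $B$ and $C$ then injectives generate for the middle category $A$. For~(ii) I would apply Lemma~\ref{lem:recollementsi*andi!}~(i), condition (a), to $(R)$: since $i^!$ preserves set indexed coproducts, injectives generating for $A$ forces injectives to generate for $B$. For~(iii) I would apply Proposition~\ref{prop:recollementi*}~(ii)(a) to $(R)$: since $i_*$ preserves bounded complexes of projectives, projectives cogenerating for both $B$ and $C$ forces projectives to cogenerate for $A$. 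For~(iv) I would apply Lemma~\ref{lemma:recollementj*}~(ii) to $(R)$: since $j^*$ preserves bounded complexes of projectives, projectives cogenerating for $A$ forces projectives to cogenerate for $C$.

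I expect the only delicate point to be the bookkeeping in the first paragraph: one must match the eight functors of Figure~\ref{fig:recollementextdown} correctly to the six slots of the down-shifted recollement $(R_\downarrow)$, keeping in mind that moving one rung down a ladder swaps the two ends of the recollement, so that the $j$-functors of $(R)$ become the $i$-functors of $(R_\downarrow)$ and vice versa. Once that dictionary is fixed, every remaining step is a direct appeal to Section~\ref{section:recollements}.
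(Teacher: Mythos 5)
Your proof is correct and follows essentially the same route as the paper: identify the bottom recollement of the ladder as $(\der{C},\der{A},\der{B})$ with $j_*$ playing the role of $i_*$, deduce that $i_*$ preserves bounded complexes of projectives from $i^!$ having a right adjoint, and then invoke Proposition~\ref{prop:recollementi*}. The only (harmless) variation is that for (ii) and (iv) you cite Lemma~\ref{lem:recollementsi*andi!} and Lemma~\ref{lemma:recollementj*} directly, whereas the paper routes both through parts (i)(b) and (ii)(b) of Proposition~\ref{prop:recollementi*}, whose proofs reduce to exactly those lemmas.
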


\begin{proof}
    The bottom recollement of the ladder is a recollement as in $(R)$ but with the positions of $B$ and $C$ swapped. Hence in this bottom recollement $j_*$ acts as $i_*$ does in the recollement $(R)$. Moreover, $j_*$ preserves bounded complexes of injectives. Thus we apply Proposition~\ref{prop:recollementi*} to prove (i) and (ii).
    
    Moreover, since $i^!$ has a right adjoint, $i^!$ preserves set indexed coproducts. Hence $i_*$ preserves bounded complexes of projectives by Lemma~\ref{lemma:adjointfunctorspreserve}. Consequently, Proposition~\ref{prop:recollementi*} proves (iii) and (iv).
\end{proof}

\begin{eg}
    By Proposition~\ref{prop:recollementsextonedown} it follows immediately that for any triangular matrix ring 
    \begin{equation*}
        A = \twobytwo{C}{_CM_B}{0}{B},
    \end{equation*}
    if injectives generate for $B$ and $C$ then injectives generate for $A$. Moreover, if injectives generate for $A$ then injectives generate for $B$. 
    
    Note that this is the `injectives generate' version of the well known result proved by Fossum, Griffith and Reiten \cite[Corollary 4.21]{FossumGriffithReiten1975} about the finitistic dimensions of triangular matrix rings.
\end{eg}

\begin{cor}
    Let $(R)$ be a recollement in a ladder of height $ \geq 3$. 
    \begin{enumerate}[label=(\roman*)]
        \item Then injectives generate for $A$ if and only if injectives generate for both $B$ and $C$.
        \item Then projectives cogenerate for $A$ if and only if projectives cogenerate for both $B$ and $C$.
    \end{enumerate}
\end{cor}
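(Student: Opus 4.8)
The plan is to deduce this directly from Proposition~\ref{prop:recollementsextonedown} by applying it to two consecutive recollements inside the ladder. The crucial observation, already used in the proof of Proposition~\ref{prop:recollementsextonedown}, is that moving down one step in a ladder produces a recollement with the same middle category $\der{A}$ but with the two outer categories $\der{B}$ and $\der{C}$ interchanged. Consequently the assertion of the corollary is really a statement about the triple of rings $A$, $B$, $C$ attached to the whole ladder, and it does not matter which recollement of the ladder we single out.

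First I would enumerate the distinct recollements of the ladder from top to bottom as $(R_1), (R_2), \dots, (R_h)$ with $h \geq 3$, arranged so that for each $\ell < h$ the rows making up $(R_\ell)$ and $(R_{\ell+1})$ form a ladder of height $2$ with $(R_\ell)$ on top. Writing $(R_1) = (\der{B}, \der{A}, \der{C})$, the description of the bottom recollement of a height-$2$ ladder recalled above gives $(R_2) = (\der{C}, \der{A}, \der{B})$, and, since $h \geq 3$, both $(R_1)$ and $(R_2)$ are themselves the top recollement of a ladder of height $2$.

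Then I would apply Proposition~\ref{prop:recollementsextonedown} twice. Applied to $(R_1)$, parts~(i)--(iv) give: injectives generate for $A$ if they generate for both $B$ and $C$; injectives generate for $A$ implies they generate for $B$; projectives cogenerate for $A$ if they cogenerate for both $B$ and $C$; and projectives cogenerate for $A$ implies they cogenerate for $C$. Applied to $(R_2)$ --- whose left and right rings are $C$ and $B$ respectively --- part~(ii) gives that injectives generate for $A$ implies they generate for $C$, and part~(iv) gives that projectives cogenerate for $A$ implies they cogenerate for $B$. Putting the two applications together yields both equivalences in the corollary.

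The only point requiring care --- and the closest thing to an obstacle --- is the combinatorics of the ladder: one must check that a ladder of height at least $3$ genuinely contains two consecutive recollements, each sitting on top of a height-$2$ sub-ladder, and that the left/right labelling of the outer categories really does flip between $(R_1)$ and $(R_2)$, so that the two uses of part~(ii) (respectively part~(iv)) reach $B$ and $C$ separately rather than the same ring twice. Both facts are immediate from the definition of a ladder together with the explicit form of the bottom recollement used in proving Proposition~\ref{prop:recollementsextonedown}, so the argument introduces no new ingredients.
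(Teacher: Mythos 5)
Your argument is correct and is exactly the intended one: the paper leaves this corollary without proof as an immediate consequence of Proposition~\ref{prop:recollementsextonedown}, applied to two consecutive recollements of the ladder, using the fact (already invoked in the proof of that proposition) that passing to the next recollement down swaps the roles of $B$ and $C$. Your care about the labelling and about both $(R_1)$ and $(R_2)$ being tops of height-$2$ sub-ladders is exactly the right check, and it goes through.
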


\subsection{Bounded above recollements}

In this section we consider the case of a recollement which restricts to a bounded above recollement. In particular, we use a characterisation by \cite{Huegel2017}.

\begin{prop}\cite[Proposition 4.11]{Huegel2017} \label{prop:recollementsbdedaboveconditions}
    Let $(R)$ be a recollement. Then the following are equivalent:
    \begin{enumerate}[label = (\roman*)]
        \item The recollement $(R)$ restricts to a bounded above recollement $(R^-)$,
        \item The functor $i_*$ preserves bounded complexes of projectives.
    \end{enumerate}
    If $A$ is a finite dimensional algebra over a field then both conditions are equivalent to:
    \begin{enumerate}[label = (\roman*),resume]
        \item The functor $i_*$ preserves compact objects.
    \end{enumerate}
\end{prop}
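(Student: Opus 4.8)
The plan is to pass, as in the rest of this section, between a functor of the recollement and its adjoint, so that only Lemma~\ref{lemma:adjointfunctorspreserve}, Table~\ref{table:recollementsfunctorprop} and the two defining triangles of $(R)$ are needed. The starting point is that $(R)$ restricts to $(R^-)$ exactly when all six functors preserve complexes bounded above in cohomology (this class being the essential image of $\derbdedabove{\text{Mod}}$ in $\der{\text{Mod}}$), and that, by Table~\ref{table:recollementsfunctorprop}, the functors $i^*$, $i_*$, $j_!$ and $j^*$ always have this property. Thus (i) is equivalent to the single statement that both $i^!$ and $j_*$ preserve complexes bounded above in cohomology.

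For (ii)~$\Rightarrow$~(i): if $i_*$ preserves bounded complexes of projectives, then $i^!$ preserves complexes bounded above in cohomology by Lemma~\ref{lemma:adjointfunctorspreserve}~(vi) applied to the pair $(i_*,i^!)$. To obtain the same for $j_*$, take $Y \in \der{C}$ bounded above in cohomology and set $X \coloneqq j_!(Y)$; then $X$ is bounded above by Table~\ref{table:recollementsfunctorprop}, and $j^*X \cong Y$ since $j_!$ is fully faithful, so in the triangle $\tri{i_*i^!X}{X}{j_*j^*X}{}{}{}$ the first two terms are bounded above in cohomology, hence so is $j_*Y = j_*j^*X$. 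Therefore $(R)$ restricts to $(R^-)$. For (i)~$\Rightarrow$~(ii): if $(R)$ restricts to $(R^-)$ then $i^!$ preserves complexes bounded above in cohomology, and also complexes bounded below in cohomology by Table~\ref{table:recollementsfunctorprop}, hence all complexes bounded in cohomology; Lemma~\ref{lemma:adjointfunctorspreserve}~(v) applied to $(i_*,i^!)$ then gives that $i_*$ preserves bounded complexes of projectives.

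Finally, the equivalence with (iii) when $A$ is a finite dimensional algebra over a field. The implication (iii)~$\Rightarrow$~(ii) needs no hypothesis on $A$: if $i_*$ preserves compact objects then $i^!$ preserves complexes bounded in cohomology by Lemma~\ref{lemma:adjointfunctorspreserve}~(ii), so for a bounded complex of projective $B$-modules $P$ and any $Z \in \der{A}$ bounded in cohomology, the adjunction $\dHom{A}{i_*(P)}{Z[n]} \cong \dHom{B}{P}{i^!Z[n]}$ combined with Lemma~\ref{lemma:homgroupcriteriabded}~(iii) (first for $P$ over $B$, then in its converse form for $i_*(P)$ over $A$) shows $i_*(P)$ is quasi-isomorphic to a bounded complex of projectives. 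For (ii)~$\Rightarrow$~(iii) one uses that $\der{A}$ is compactly generated by $A$ with compact objects the perfect complexes, and that over the finite dimensional algebra $A$ a bounded complex of projective modules is perfect once its total cohomology is finite dimensional; it then suffices to show that $i_*(B)$, where $B$ denotes the regular right $B$-module, has finite dimensional cohomology, which one would extract from the adjunction for $(i^*,i_*)$ by writing $\cohomology{n}{i_*(B)} \cong \dHom{B}{i^*(A)}{B[n]}$ and using that $i^*(A)$ is a perfect complex over $B$ (Table~\ref{table:recollementsfunctorprop}). I expect this last point---the only place where finite dimensionality of $A$ is genuinely needed, and where one must control the size of the cohomology of $i_*(B)$---to be the main obstacle; the rest is formal manipulation of adjunctions, the criteria of Lemma~\ref{lemma:homgroupcriteriabded}, and the recollement triangles.
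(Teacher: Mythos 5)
The paper does not actually prove this proposition --- it is quoted from \cite[Proposition 4.11]{Huegel2017} --- so the only internal benchmark is the dual argument given for Proposition~\ref{prop:recollementbdedbelowconditions}. Your treatment of (i)~$\Leftrightarrow$~(ii) and of (iii)~$\Rightarrow$~(ii) is correct and is precisely the dual of that argument: reduce (i) to the statement that $i^!$ and $j_*$ preserve complexes bounded above in cohomology, pass between $i_*$ and $i^!$ via Lemma~\ref{lemma:adjointfunctorspreserve}, and handle $j_*$ with the triangle $\tri{i_*i^!X}{X}{j_*j^*X}{}{}{}$ applied to $X = j_!(Y)$. That part needs no changes.

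The gap is in (ii)~$\Rightarrow$~(iii), exactly where you flagged it. The identification $\cohomology{n}{i_*(B)} \cong \dHom{B}{i^*(A)}{B[n]}$ together with perfectness of $i^*(A)$ over $B$ only exhibits this cohomology as a subquotient of finitely generated projective $B$-modules, i.e.\ as a finitely generated $B$-module. But $B$ is an arbitrary ring in the recollement --- the finiteness hypothesis is only on $A$ --- so finite generation over $B$ gives no bound on the $k$-dimension of $\cohomology{n}{i_*(B)}$. Worse, since $B \cong \dHom{A}{i_*(B)}{i_*(B)}$, knowing that $B$ is a finite dimensional algebra is essentially equivalent to the compactness of $i_*(B)$ that you are trying to establish, so the argument is circular at this point. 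Your reduction of compactness to finite dimensionality of the total cohomology (via Noetherianity of $A$ and truncation) is fine; what is missing is the finite dimensionality itself. This implication is the genuinely nontrivial content of \cite[Proposition 4.11]{Huegel2017}: one has to show that over a finite dimensional algebra a bounded complex of projectives which, like $i_*(B)$, is exceptional and self-compact (note $\dHom{A}{i_*(B)}{i_*(Y)} \cong \cohomology{0}{Y}$ commutes with coproducts taken inside $\im{i_*}$) is already compact, and this requires an input beyond the formal adjunction manipulations that suffice for the other implications.
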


Note that if $i_*(B)$ is compact then the recollement $(R)$ also extends one step downwards by Proposition~\ref{Huegel2017:recollementclassificationext} \cite[Proposition 3.2]{Huegel2017}.

\begin{prop} \label{prop:recollementsbdedabove}
Let $(R)$ be a recollement that restricts to a bounded above recollement $(R^-)$.
\begin{enumerate}[label = (\roman*)]
    \item If projectives cogenerate for $B$ and $C$ then projectives cogenerate for $A$. \item If projectives cogenerate for $A$ then projectives cogenerate for $C$.
\end{enumerate}
Moreover, suppose that $A$ is a finite dimensional algebra over a field.
\begin{enumerate}[label = (\roman*),resume]
    \item If injectives generate for $B$ and $C$ then injectives generate for $A$.
    \item If injectives generate for $A$ then injectives generate for $B$.
\end{enumerate}

\end{prop}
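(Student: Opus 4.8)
The plan is to deduce all four parts directly from results already established in this section, using Proposition~\ref{prop:recollementsbdedaboveconditions} to convert the hypothesis that $(R)$ restricts to a bounded above recollement into a preservation property of the functor $i_*$.

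For parts (i) and (ii), I would first apply Proposition~\ref{prop:recollementsbdedaboveconditions}: the assumption that $(R)$ restricts to $(R^-)$ is equivalent to $i_*$ preserving bounded complexes of projectives. This is precisely the hypothesis of Proposition~\ref{prop:recollementi*}(ii), so part (a) of that proposition gives (i) and part (b) gives (ii), with nothing further to verify.

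For parts (iii) and (iv), the additional assumption that $A$ is a finite dimensional algebra over a field allows Proposition~\ref{prop:recollementsbdedaboveconditions} to upgrade the conclusion `$i_*$ preserves bounded complexes of projectives' to `$i_*$ preserves compact objects'. By Proposition~\ref{Huegel2017:recollementclassificationext}(i) this is exactly the condition for $(R)$ to extend one step downwards, so $(R)$ sits as the top recollement of a ladder of height at least $2$. Proposition~\ref{prop:recollementsextonedown}(i) then yields (iii) and Proposition~\ref{prop:recollementsextonedown}(ii) yields (iv).

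Since the argument is essentially an assembly of earlier results, there is no substantial obstacle; the only point requiring care is the exact role of the finite-dimensionality hypothesis. It is what permits the passage from the projective-preservation property of $i_*$ to compactness of $i_*$, and hence to the existence of a downward extension of the recollement. This is precisely why the injective-generation statements must be stated separately under this extra assumption, whereas the projective-cogeneration statements hold for an arbitrary recollement restricting to $(R^-)$.
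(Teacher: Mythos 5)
Your proposal is correct and follows essentially the same route as the paper: Proposition~\ref{prop:recollementsbdedaboveconditions} converts the hypothesis into $i_*$ preserving bounded complexes of projectives, Proposition~\ref{prop:recollementi*} gives (i) and (ii), and the finite-dimensionality hypothesis upgrades this to compactness of $i_*$, so that Proposition~\ref{Huegel2017:recollementclassificationext} and Proposition~\ref{prop:recollementsextonedown} yield (iii) and (iv).
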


\begin{proof}
Since $(R)$ restricts to a recollement of bounded above derived categories, $i_*$ preserves bounded complexes of projectives by Proposition \ref{prop:recollementsbdedaboveconditions} \cite[Proposition 4.11]{Huegel2017}. Hence we apply Proposition~\ref{prop:recollementi*} to get (i) and (ii). Furthermore, if $A$ is a finite dimensional algebra over a field then $i_*$ preserves compact objects. Then the recollement also extends one step downwards by Proposition~\ref{Huegel2017:recollementclassificationext} so (iii) and (iv) follow from Proposition~\ref{prop:recollementsextonedown}.

\end{proof}

\subsection{Bounded below recollements}

Now we consider recollements $(R)$ that restrict to bounded below recollements $(R^+)$. First we prove an analogous statement to Proposition~\ref{prop:recollementsbdedaboveconditions} about the conditions under which a recollement $(R)$ restricts to a recollement $(R^+)$.

\begin{prop} \label{prop:recollementbdedbelowconditions}
    Let $(R)$ be a recollement. Then the following are equivalent:
    \begin{enumerate}[label = (\roman*)]
        \item The recollement $(R)$ restricts to a bounded below recollement $(R^+)$,
        \item The functor $i_*$ preserves bounded complexes of injectives.
    \end{enumerate}
    If $A$ is a finite dimensional algebra over a field then both conditions are equivalent to:
    \begin{enumerate}[label = (\roman*),resume]
        \item The functor $j_!$ preserves bounded complexes of finitely generated modules.
    \end{enumerate}
\end{prop}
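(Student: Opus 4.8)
The plan is to mirror the proof of the bounded-above analogue (Proposition~\ref{prop:recollementsbdedaboveconditions}, i.e.\ \cite[Proposition 4.11]{Huegel2017}), but dualised to the bounded-below setting, replacing projectives by injectives and compact objects by bounded complexes of injectives throughout. The heart of the matter is that a recollement restricts to $(R^+)$ if and only if all six functors preserve the property of being bounded below in cohomology; by Table~\ref{table:recollementsfunctorprop} the functors $i_*$, $i^!$, $j^*$, $j_*$ do this unconditionally, so the only obstruction is whether $i^*$ and $j_!$ preserve complexes bounded below in cohomology. Thus the whole statement reduces to showing that the three listed conditions are each equivalent to ``$i^*$ and $j_!$ preserve complexes bounded below in cohomology.''

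First I would prove (i)$\Leftrightarrow$(ii). For (ii)$\Rightarrow$(i): assume $i_*$ preserves bounded complexes of injectives. By Lemma~\ref{lemma:adjointfunctorspreserve}~(iv) applied to the adjoint pair $(i^*,i_*)$, the functor $i^*$ preserves complexes bounded below in cohomology. It remains to see $j_!$ does too; this is exactly the argument already carried out inside the proof of Proposition~\ref{prop:recollementi*}~(i)(b): since $i_*$ preserves bounded complexes of injectives one deduces (via Lemma~\ref{lemma:adjointfunctorspreserve}) that $i^*$ preserves complexes bounded below in cohomology, and then the triangle $\tri{j_!j^*(j_*(X))}{j_*(X)}{i_*i^*(j_*(X))}{}{}{}$ for $X$ bounded below in cohomology, together with $j_!j^*j_* \cong j_!$ and the fact that $i_*$, $i^*$, $j_*$ all preserve complexes bounded below in cohomology, forces $j_!$ to preserve them as well. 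Hence all six functors restrict and $(R)$ restricts to $(R^+)$. For (i)$\Rightarrow$(ii): if $(R)$ restricts to $(R^+)$ then in particular $i^!$ and $i_*$ restrict to functors on $\der[+]{}$, so $(i_*,i^!)$ is an adjoint pair between the bounded-below derived categories; since $\cogenerator{A}$-type arguments show a bounded-below complex is a bounded complex of injectives precisely when the relevant Hom-vanishing holds (Lemma~\ref{lemma:homgroupcriteriabded}~(ii)), the existence of the restricted right adjoint $i^!$ forces $i_*$ to send bounded complexes of injectives to bounded complexes of injectives — concretely, one checks that for $J$ a bounded complex of $B$-injectives and $X \in \der{A}$ bounded in cohomology, $\dHom{A}{X}{i_*(J)[n]} \cong \dHom{B}{i^*(X)}{J[n]}$ vanishes for all but finitely many $n$ because $i^*$ preserves boundedness in cohomology and $J$ is a bounded complex of injectives, so $i_*(J)$ is a bounded complex of injectives by Lemma~\ref{lemma:homgroupcriteriabded}~(ii).

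Next, under the hypothesis that $A$ is a finite dimensional algebra over a field, I would prove (ii)$\Leftrightarrow$(iii). Here the duality between finitely generated modules and the injective cogenerator $\dual{A} = \Homgroup{k}{A}{k}$ is what converts ``bounded complexes of injectives'' for $i_*$ into ``bounded complexes of finitely generated modules'' for its right-hand-side partner; since $A$ is finite dimensional, $k$-duality $\Homgroup{k}{-}{k}$ induces a duality between $\derbded{A}$ (bounded complexes of finitely generated $A$-modules) and $\derbded{A^{\mathrm{op}}}$ interchanging injectives and projectives, and it intertwines the recollement $(R)$ with the opposite recollement in which $i_*$ is replaced by $j_!$ and the roles of bounded complexes of injectives and bounded complexes of projectives are swapped. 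Applying the already-known bounded-above characterisation (Proposition~\ref{prop:recollementsbdedaboveconditions}) to the opposite recollement then gives the equivalence with (iii). I would present this by spelling out the standard dictionary $D = \Homgroup{k}{-}{k}$, noting $D$ is exact, sends $\Inj{A}$ to $\Proj{A^{\mathrm{op}}}$, sends finite-dimensional modules to finite-dimensional modules, and identifies the six functors of $(R)$ with those of the recollement $(D\der{B}, D\der{A}, D\der{C})$ in the appropriate way, and then quoting Proposition~\ref{prop:recollementsbdedaboveconditions}.

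The main obstacle I anticipate is the (i)$\Rightarrow$(ii) direction: knowing merely that the six functors \emph{restrict} to $\der[+]{}$ does not immediately say anything about how $i_*$ treats the finer class of bounded complexes of injectives, and one must argue that a restricted right adjoint $i^!$ on $\der[+]{}$ together with the Hom-group criterion of Lemma~\ref{lemma:homgroupcriteriabded}~(ii) upgrades ``$i_*$ preserves bounded-below'' to ``$i_*$ preserves bounded complexes of injectives.'' A secondary technical point is being careful that the equivalence (ii)$\Leftrightarrow$(iii) genuinely needs finite-dimensionality (it is used both to have $k$-duality available and to know $j_!$ preserves \emph{finitely generated} modules rather than merely compact objects), exactly as in Proposition~\ref{prop:recollementsbdedaboveconditions}.
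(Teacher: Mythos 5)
Your treatment of (i)$\Leftrightarrow$(ii) is correct and is essentially the paper's argument: (ii)$\Rightarrow$(i) reuses the bounded-below bookkeeping from the proof of Proposition~\ref{prop:recollementi*}, and (i)$\Rightarrow$(ii) amounts to observing that $i^*$ preserves complexes bounded above in cohomology unconditionally (Table~\ref{table:recollementsfunctorprop}, or Lemma~\ref{lemma:adjointfunctorspreserve} applied to $i_*$) and bounded below by hypothesis, hence bounded, so that $\dHom{A}{X}{i_*(J)[n]} \cong \dHom{B}{i^*(X)}{J[n]}$ together with Lemma~\ref{lemma:homgroupcriteriabded}~(ii) gives the claim. (Your framing via ``the existence of the restricted right adjoint $i^!$'' is a red herring --- the input you actually use, correctly, is that $i^*$ preserves complexes bounded in cohomology.)

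The equivalence with (iii) is where there is a genuine gap. First, $\Homgroup{k}{-}{k}$ is a duality only on bounded complexes of finite-dimensional modules; it is not an (anti-)equivalence of the unbounded categories $\der{A}$, $\der{B}$, $\der{C}$, and the hypothesis is only that $A$ is finite dimensional --- $B$ and $C$ need not be --- so the ``opposite recollement'' you want to apply Proposition~\ref{prop:recollementsbdedaboveconditions} to is not available for free (constructing it is essentially the ladder-extension problem). Second, even granting such an opposite recollement, duality does \emph{not} replace $i_*$ by $j_!$: it fixes $i_*$ while swapping $i^*$ with $i^!$ and $j_!$ with $j_*$. Hence the dual of condition (iii) of Proposition~\ref{prop:recollementsbdedaboveconditions} (``$i_*$ preserves compact objects'') is a condition on $i_*$, not the condition ``$j_!$ preserves bounded complexes of finitely generated modules''; your dictionary produces the wrong statement. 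The paper argues differently and more directly: (iii)$\Rightarrow$(ii) because, for $A$ finite dimensional, $j_!$ preserving bounded complexes of finitely generated modules means the recollement extends one step upwards (Proposition~\ref{Huegel2017:recollementclassificationext}), so $i^*$ has a left adjoint which preserves compact objects, whence $i^*$ preserves complexes bounded in cohomology and $i_*$ preserves bounded complexes of injectives by Lemma~\ref{lemma:adjointfunctorspreserve}; and (i)$\Rightarrow$(iii) because for a bounded complex $X$ of finitely generated $C$-modules, $j_!(X)$ is a bounded \emph{above} complex of finitely generated modules (this uses $A$ finite dimensional, via \cite[Lemma 2.10 (b)]{Huegel2017}), and boundedness below in cohomology then allows a good truncation to a bounded complex of finitely generated modules. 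You would need to supply arguments of this kind rather than the duality transfer.
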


\begin{proof}

    First we prove that (ii) implies (i). Suppose that $i_*$ preserves bounded complexes of injectives. Then by the proof of Proposition~\ref{prop:recollementi*} all six functors preserve complexes bounded below in cohomology. Hence the recollement $(R)$ restricts to a bounded below recollement $(R^+)$. 
   
    For the converse statement, suppose that $(R)$ restricts to a bounded below recollement $(R^+)$, that is, all six functors preserve complexes bounded below in cohomology. Since $i_*$ preserves complexes bounded in cohomology, by Lemma~\ref{lemma:adjointfunctorspreserve}, $i^*$ preserves complexes bounded above in cohomology. Hence $i^*$ preserves both complexes bounded above and bounded below in cohomology. Thus $i^*$ preserves complexes bounded in cohomology and by Lemma~\ref{lemma:adjointfunctorspreserve}, $i_*$ preserves bounded complexes of injectives.
    
    Note that (iii) implies (ii) by Proposition~\ref{Huegel2017:recollementclassificationext} and Lemma~\ref{lemma:adjointfunctorspreserve}.
    
    Finally, we show that (i) implies (iii). Suppose that $A$ is a finite dimensional algebra over a field. Let $X \in \der{C}$ be a bounded complex of finitely generated modules. Since $A$ is a finite dimensional algebra over a field, $j_!(X)$ is a bounded above complex of finitely generated modules \cite[Lemma 2.10 (b)]{Huegel2017}. Suppose that $(R)$ restricts to a bounded below recollement $(R^+)$. Then $j_!(X)$ is bounded below in cohomology, so we can truncate $j_!(X)$ from below and $j_!(X)$ is quasi-isomorphic to a bounded complex of finitely generated modules.
\end{proof}

We can use these results to get an analogous statement to Proposition~\ref{prop:recollementsbdedabove} about bounded below recollements.

\begin{prop} \label{prop:recollementbdedbelow}
    Let $(R)$ be a recollement that restricts to a bounded below recollement $(R^+)$.
    \begin{enumerate}[label = (\roman*)]
        \item If injectives generate for both $B$ and $C$ then injectives generate for $A$.
        \item If injectives generate for $A$ then injectives generate for $C$.
    \end{enumerate}
    Moreover, suppose that $A$ is a finite dimensional algebra over a field.
    \begin{enumerate}[label = (\roman*),resume]
            \item If projectives cogenerate for both $B$ and $C$ then projectives cogenerate for $A$.
            \item If projectives cogenerate for $A$ then projectives cogenerate for $B$.
    \end{enumerate}
\end{prop}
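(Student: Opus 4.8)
The plan is to read off parts (i) and (ii) directly from the characterisation of bounded below recollements in Proposition~\ref{prop:recollementbdedbelowconditions}, and to obtain (iii) and (iv) by exhibiting $(R)$ as one rung of a ladder of height $2$.

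First I would handle (i) and (ii). Since $(R)$ restricts to a bounded below recollement $(R^+)$, Proposition~\ref{prop:recollementbdedbelowconditions} gives that $i_*$ preserves bounded complexes of injectives. Parts (i) and (ii) are then precisely the two conclusions of Proposition~\ref{prop:recollementi*}~(i), namely the statements labelled (a) and (b) there. No hypothesis on $A$ is needed for this, because injective generation is governed exactly by $i_*$ preserving bounded complexes of injectives, which is the standing assumption.

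For (iii) and (iv) I would invoke the extra hypothesis that $A$ is a finite dimensional algebra over a field. Under this assumption, Proposition~\ref{prop:recollementbdedbelowconditions}~(iii) tells us that $j_!$ preserves bounded complexes of finitely generated modules, so by Proposition~\ref{Huegel2017:recollementclassificationext}~(ii) the recollement $(R)$ extends one step upwards; equivalently, $(R)$ is the bottom recollement of a ladder of recollements of height $2$. The top recollement of this ladder is $(R)$ with the roles of $B$ and $C$ interchanged, i.e. a recollement of the form $(\der{C},\der{A},\der{B})$. Applying Proposition~\ref{prop:recollementsextonedown}~(iii) and (iv) to this top recollement, in which $B$ and $C$ are swapped relative to $(R)$, yields exactly: projectives cogenerate for both $B$ and $C$ implies projectives cogenerate for $A$ (part (iii)), and projectives cogenerate for $A$ implies projectives cogenerate for $B$ (part (iv)).

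I do not expect a serious obstacle: the analytic content is entirely imported from Proposition~\ref{prop:recollementbdedbelowconditions}, Proposition~\ref{prop:recollementi*}, Proposition~\ref{Huegel2017:recollementclassificationext} and Proposition~\ref{prop:recollementsextonedown}. The one point that requires care is the bookkeeping of the $B$/$C$ swap when moving one rung up the ladder, together with the observation that injective generation transfers symmetrically through $i_*$ preserving bounded complexes of injectives — so (i) and (ii) are immediate and need no finiteness — whereas projective cogeneration must be routed through the ladder and therefore inherits the finite-dimensionality hypothesis on $A$, mirroring the asymmetry already seen for bounded above recollements in Proposition~\ref{prop:recollementsbdedabove}.
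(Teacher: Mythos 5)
Your proposal is correct and is essentially the paper's own argument: the paper simply declares the proof dual to that of Proposition~\ref{prop:recollementsbdedabove}, and what you have written out --- parts (i) and (ii) from Proposition~\ref{prop:recollementbdedbelowconditions} combined with Proposition~\ref{prop:recollementi*}, and parts (iii) and (iv) from the upward ladder extension via Proposition~\ref{Huegel2017:recollementclassificationext} and Proposition~\ref{prop:recollementsextonedown} --- is precisely that dual argument. Your bookkeeping of the $B$/$C$ swap in the top recollement of the height-$2$ ladder is the one point the paper leaves implicit, and you have it right.
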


\begin{proof}
    The proof is dual to the proof of Proposition~\ref{prop:recollementsbdedabove}.
\end{proof}

\subsection{Bounded recollements}

Finally we consider the case of a recollement $(R)$ which restricts to a bounded recollement $(R^b)$.

\begin{prop} \label{prop:recollementsbded}
Let $(R)$ be a recollement that restricts to a bounded recollement $(R^b)$.
\begin{enumerate}[label = (\roman*)]
     \item If injectives generate for both $B$ and $C$ then injectives generate for $A$.
     \item If injectives generate for $A$ then injectives generate for $C$.
     \item If projectives cogenerate for both $B$ and $C$ then projectives cogenerate for $A$.
     \item If projectives cogenerate for $A$ then projectives cogenerate for $C$.
\end{enumerate}
Moreover, suppose that $A$ is a finite dimensional algebra over a field.
\begin{enumerate}[label = (\roman*), resume]
    \item Injectives generate for $A$ if and only if injectives generate for both $B$ and $C$.
    \item Projectives cogenerate for $A$ if and only if projectives cogenerate for both $B$ and $C$.
\end{enumerate}
\end{prop}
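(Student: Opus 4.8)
The plan is to reduce everything to the bounded above and bounded below cases already handled in Proposition~\ref{prop:recollementsbdedabove} and Proposition~\ref{prop:recollementbdedbelow}. The first — and essentially only nontrivial — step is to show that if $(R)$ restricts to a bounded recollement $(R^b)$, then it restricts to \emph{both} a bounded above recollement $(R^-)$ and a bounded below recollement $(R^+)$. By definition $(R^b)$ says that all six functors of $(R)$ preserve complexes bounded in cohomology; in particular $i^*$ and $i^!$ do. Applying Lemma~\ref{lemma:adjointfunctorspreserve}~(iii) to the adjoint pair $(i^*,i_*)$ then gives that $i_*$ preserves bounded complexes of injectives, so by Proposition~\ref{prop:recollementbdedbelowconditions} the recollement $(R)$ restricts to $(R^+)$. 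Dually, applying Lemma~\ref{lemma:adjointfunctorspreserve}~(v) to the adjoint pair $(i_*,i^!)$ gives that $i_*$ preserves bounded complexes of projectives, so by Proposition~\ref{prop:recollementsbdedaboveconditions} the recollement $(R)$ restricts to $(R^-)$.

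With these two facts in hand, parts (i)--(iv) follow by quoting the earlier results, none of which require a finiteness hypothesis on $A$: part (i) is Proposition~\ref{prop:recollementbdedbelow}~(i), part (ii) is Proposition~\ref{prop:recollementbdedbelow}~(ii), part (iii) is Proposition~\ref{prop:recollementsbdedabove}~(i), and part (iv) is Proposition~\ref{prop:recollementsbdedabove}~(ii).

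For parts (v) and (vi) we assume in addition that $A$ is a finite dimensional algebra over a field. The backward implication of (v) is part (i); for the forward implication, injectives generate for $C$ by part (ii), and injectives generate for $B$ by Proposition~\ref{prop:recollementsbdedabove}~(iv), which applies since $(R)$ restricts to $(R^-)$ and $A$ is finite dimensional. Symmetrically, the backward implication of (vi) is part (iii), and for the forward implication projectives cogenerate for $C$ by part (iv) and for $B$ by Proposition~\ref{prop:recollementbdedbelow}~(iv). The only step requiring genuine care is the opening one, passing from $(R^b)$ to $(R^+)$ and $(R^-)$ via the characterizations of Proposition~\ref{prop:recollementsbdedaboveconditions} and Proposition~\ref{prop:recollementbdedbelowconditions} together with the adjunction transfer of Lemma~\ref{lemma:adjointfunctorspreserve}; once that is in place the proposition is pure bookkeeping.
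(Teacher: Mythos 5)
Your proposal is correct and follows essentially the same route as the paper: from $(R^b)$ one deduces that $i^*$ and $i^!$ preserve complexes bounded in cohomology, hence by Lemma~\ref{lemma:adjointfunctorspreserve} that $i_*$ preserves both bounded complexes of injectives and of projectives, so $(R)$ restricts to $(R^+)$ and $(R^-)$ and everything follows from Propositions~\ref{prop:recollementsbdedabove} and~\ref{prop:recollementbdedbelow}. The only difference is that you spell out which parts of those propositions yield each claim, which the paper leaves implicit.
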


\begin{proof}
    Since $(R^b)$ is a recollement of bounded derived categories both $i^*$ and $i^!$ preserve complexes bounded in cohomology. Hence, by Lemma~\ref{lemma:adjointfunctorspreserve}, $i_*$ preserves both bounded complexes of injectives and bounded complexes of projectives. So $(R)$ restricts to a bounded above recollement by Proposition~\ref{prop:recollementsbdedaboveconditions} and a bounded below recollement  by Proposition~\ref{prop:recollementbdedbelowconditions}. Thus the results follow immediately from Proposition~\ref{prop:recollementsbdedabove} and Proposition~\ref{prop:recollementbdedbelow}.
\end{proof}

\section{Recollements of module categories} \label{section:dgrecollements}

Recollements can also be defined for abelian categories, in particular, for module categories, see for example \cite[Definition 2.6]{PsaroudakisVitoria2014}.

\begin{defn}[Recollement of module categories]
    Let $A$, $B$ and $C$ be rings. A recollement of module categories is a diagram of additive functors as in Figure~\ref{fig:recollementmodulecategories} such that the following hold:
    \begin{enumerate}[label = (\roman*)]
        \item Each of the pairs $(q,i)$, $(i,p)$, $(l,e)$ and $(e,r)$ is an adjoint pair of functors.
        \item The functors $i$, $l$ and $r$ are fully faithful.
        \item The composition $e \circ i$ is zero.
    \end{enumerate}
\end{defn}

\begin{figure}[htbp]
\begin{center}
	\begin{tikzpicture}
		\node(a) at (-5,0) {};
		\node (t') at (-4.5,0) {$\Mod{B}$};
		\node (t) at (0,0) {$\Mod{A}$};
		\node (t'') at (4.5,0) {$\Mod{C}$};
		
		\draw[->] (t') -- node[midway,above] {$i$} (t);
		\draw[->] (t) -- node[midway,above] {$e$} (t'');
		\draw[->] (t) to [bend right = 45] node[midway,above] {$q$} (t');
		\draw[->] (t) to [bend left = 45] node[midway,below] {$p$} (t');
		\draw[->] (t'') to [bend right = 45] node[midway,above] {$l$} (t);
		\draw[->] (t'') to [bend left = 45] node[midway,below] {$r$} (t);		
	\end{tikzpicture}	
\end{center}
\vspace{-0.325cm}
\caption{Recollement of module categories }
\label{fig:recollementmodulecategories}
\end{figure}
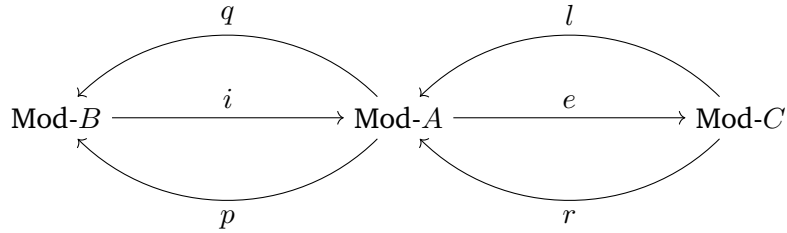

We will denote a recollement of module categories as in Figure~\ref{fig:recollementmodulecategories} by $(\Mod{B}, \Mod{A}, \Mod{C})$. Recollements of module categories are prevalent in representation theory as given a ring $A$ and an idempotent $e \in A$ there exists a recollement of module categories
\begin{equation*}
    (\Mod{(\quotient{A}{AeA}{})},\Mod{A},\Mod{eAe}),
\end{equation*}
with functors given by, 

\begin{table}[H]
        \begin{center}
            \begin{tabular}{cc}
                 $q \coloneqq \tens{-}{A}{\quotient{A}{AeA}{}}$, &  $l \coloneqq \tens{-}{eAe}{eA}$,
                 \vspace{1mm}
                 \\
                 $i \coloneqq \Homgroup{\quotient{A}{AeA}{}}{\quotient{A}{AeA}{}}{-} \cong \tens{-}{\quotient{A}{AeA}{}}{\quotient{A}{AeA}{}}$, & $e \coloneqq \Homgroup{A}{eA}{-} \cong \tens{-}{A}{Ae}$,
                 \vspace{1mm}
                 \\
                 $p \coloneqq \Homgroup{A}{\quotient{A}{AeA}{}}{-}$, & $r \coloneqq \Homgroup{eAe}{Ae}{-}$.
            \end{tabular}
        \end{center}
    \end{table}

The recollement of module categories $(\Mod{(\quotient{A}{AeA}{})},\Mod{A},\Mod{eAe})$ lifts to a recollement of the corresponding derived module categories if $\function{\pi}{A}{\quotient{A}{AeA}{}}$ is a homological epimorphism \cite{Cline1996}, \cite[Subsection 1.7]{AngeleriHuegelKoenigLiu2011}, \cite[Proposition 2.1]{Huegel2017-dgalgebra}. When this condition is not satisfied the recollement of module categories lifts to a recollement of derived module categories of dg algebras \cite[Remark p.~55]{Huegel2017-dgalgebra}. In this case the lifted recollement has middle ring $A$ and right hand side ring $eAe$ with $j_! = \textbf{L}l$, $j^* = e$ and $j_* = \textbf{R}r$. However, the left hand side ring is given by a dg algebra $B$, see Figure~\ref{fig:recollementlifteddgalgebra}.

\begin{figure}[htbp]
\begin{center}
	\begin{tikzpicture}
		\node(a) at (-5,0) {};
		\node (t') at (-4.5,0) {$\der{B}$};
		\node (t) at (0,0) {$\der{A}$};
		\node (t'') at (4.5,0) {$\der{eAe}$};
		
		\draw[->] (t') -- node[midway,above] {$i_*$} (t);
		\draw[->] (t) -- node[midway,above] {$\tens{-}{A}{Ae}$} (t'');
		\draw[->] (t) to [bend right = 45] node[midway,above] {$i^*$} (t');
		\draw[->] (t) to [bend left = 45] node[midway,below] {$i^!$} (t');
		\draw[->] (t'') to [bend right = 45] node[midway,above] {$\ltens{-}{eAe}{eA}$} (t);
		\draw[->] (t'') to [bend left = 45] node[midway,below] {$\rHom{eAe}{Ae}{-}$} (t);		
	\end{tikzpicture}	
\end{center}
\vspace{-0.325cm}
\caption{Recollement of dg algebras induced from a recollement of module categories }
\label{fig:recollementlifteddgalgebra}
\end{figure}

Throughout the rest of this section we focus on proving generation statements relating the three rings $A$, $eAe$ and $\quotient{A}{AeA}{}$. To do this we aim to apply the results of Section~\ref{section:recollements} to the induced recollement of derived categories in Figure~\ref{fig:recollementlifteddgalgebra}.

\begin{lemma} \label{lem:homologyinsubcategories}
    Let $A$ be a ring. Let $X \in \der{A}$ and let $\cat{T}$ be a triangulated subcategory of $\der{A}$. Suppose that all of the cohomology modules of $X$ are in $\cat{T}$. If $X \in \der{A}$ is bounded in cohomology then $X$ is in $\cat{T}$.
\end{lemma}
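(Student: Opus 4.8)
The plan is to induct on the number of degrees in which $X$ has nonzero cohomology, using the good truncation triangles of the standard $t$-structure on $\der{A}$.

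Since $\cat{T}$ is triangulated it is closed under isomorphism in $\der{A}$, so I may freely replace $X$ by a quasi-isomorphic complex. As $X$ is bounded in cohomology, the set $\{ i \in \integers : \cohomology{i}{X} \neq 0 \}$ is finite; write $N$ for its cardinality and induct on $N$. If $N = 0$ then $X$ is quasi-isomorphic to $0$, which lies in $\cat{T}$. If $N = 1$, with $\cohomology{i}{X} = 0$ for all $i \neq n$, then $X$ is quasi-isomorphic to the stalk complex $\cohomology{n}{X}[-n]$; since $\cohomology{n}{X} \in \cat{T}$ by hypothesis and $\cat{T}$ is closed under shifts, $X$ is in $\cat{T}$.

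For the inductive step, let $N \geq 2$ and let $n$ be the largest integer with $\cohomology{n}{X} \neq 0$. I would apply the good truncation triangle
\[ \tri{\goodtrunabove{n-1}{X}}{X}{\goodtrunbelow{n}{X}}{}{}{}. \]
Because $\cohomology{i}{X} = 0$ for $i > n$, the complex $\goodtrunbelow{n}{X}$ has cohomology concentrated in degree $n$ and is thus quasi-isomorphic to the stalk complex $\cohomology{n}{X}[-n]$, which lies in $\cat{T}$ as in the base case. On the other hand $\cohomology{i}{\goodtrunabove{n-1}{X}}$ equals $\cohomology{i}{X}$ for $i \leq n-1$ and vanishes otherwise, so the nonzero cohomology modules of $\goodtrunabove{n-1}{X}$ form a subset of those of $X$ — hence all lie in $\cat{T}$ — and there are $N - 1$ of them. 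By the inductive hypothesis $\goodtrunabove{n-1}{X}$ is in $\cat{T}$. Two of the three vertices of the displayed triangle now lie in the triangulated subcategory $\cat{T}$, so the third, $X$, does as well, which closes the induction.

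This is essentially bookkeeping. The only points that need a moment's care are the observation that the good truncation $\goodtrunabove{n-1}{X}$ has cohomology modules lying among those of $X$ (so the inductive hypothesis genuinely applies), together with the standard fact that a complex with cohomology in a single degree is quasi-isomorphic to the corresponding stalk complex; I do not expect any serious obstacle.
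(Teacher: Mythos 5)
Your proof is correct and is exactly the standard d\'evissage argument the paper invokes: the paper's proof is the one-sentence assertion that a complex bounded in cohomology lies in the triangulated subcategory generated by its cohomology modules, and your induction on the number of nonzero cohomology degrees via good truncation triangles is precisely the justification of that assertion.
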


\begin{proof}
    A complex bounded in cohomology is in the triangulated subcategory of the derived category generated by its cohomology modules.
\end{proof}

We restrict to the case when the dg algebra of the recollement of derived categories induced by the recollement $(\Mod{(\quotient{A}{AeA}{})},\Mod{A},\Mod{eAe})$ is bounded in cohomology. This property is satisfied when, for example, $Ae$ has finite flat dimension as a right $eAe$-module or $eA$ has finite flat dimension as a left $eAe$-module.

\begin{lemma} \label{lemma:imi*isgeneratedbyA/AeA}
    Let $A$ be a ring and $e \in A$ an idempotent. Consider the functor
    \begin{equation*}
        \function{j^* \coloneqq \tens{-}{A}{Ae}}{\der{A}}{\der{eAe}}.
    \end{equation*}
    Suppose that $\ltens{Ae}{eAe}{eA}$ is bounded in cohomology.
    \begin{enumerate}[label = (\roman*)]
        \item Then $\kernel{\tens{-}{A}{Ae}} = \lsub{A}{\res{A}{\quotient{A}{AeA}{}}{\quotient{A}{AeA}{}}}$. 
        \item Then $\kernel{\tens{-}{A}{Ae}} = \cosub{A}{\res{A}{\quotient{A}{AeA}{}}{\cogenerator{(\quotient{A}{AeA}{})}}}$.
    \end{enumerate}

\end{lemma}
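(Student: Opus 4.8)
**Proof proposal for Lemma~\ref{lemma:imi*isgeneratedbyA/AeA}.**The plan is to prove part (i) directly and then note that part (ii) follows by the dual argument with coproducts replaced by products. Write $\bar{A} \coloneqq \quotient{A}{AeA}{}$ and $j^* \coloneqq \tens{-}{A}{Ae}$. The kernel $\kernel{j^*}$ is a localising subcategory of $\der{A}$, since $j^*$ is a triangle functor preserving set indexed coproducts (it is a left derived tensor, so in fact $j^* = \ltens{-}{A}{Ae}$, but $Ae$ is projective as a right $A$-module so no derivation is needed). I claim first the containment $\lsub{A}{\res{A}{\bar{A}}{\bar{A}}} \subseteq \kernel{j^*}$: the module $\bar{A} = \quotient{A}{AeA}{}$ satisfies $\tens{\bar{A}}{A}{Ae} = \quotient{Ae}{AeAe}{} = 0$ since $e \in AeA$ forces $Ae = AeAe$ — wait, more carefully, $\tens{\bar A}{A}{Ae} \cong \quotient{Ae}{(AeA)e}{} = \quotient{Ae}{AeAe}{}$, and $AeAe = Ae$ because $e = e\cdot e \in eAe$ acts as identity on $Ae$ on the right up to the inclusion; in any case $\bar{A} \otimes_A Ae = e\bar{A}e$-type computation gives $0$. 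Hence $\res{A}{\bar{A}}{\bar{A}} \in \kernel{j^*}$, and since $\kernel{j^*}$ is localising the generated localising subcategory is contained in it.

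For the reverse containment $\kernel{j^*} \subseteq \lsub{A}{\res{A}{\bar{A}}{\bar{A}}}$, let $X \in \der{A}$ with $j^*(X) = 0$. The key input is that $\bar{A}$-modules, viewed in $\der{A}$ via restriction, generate exactly the right class: every $\bar{A}$-module is a quotient of a coproduct of copies of $\bar{A}$, so $\lsub{A}{\res{A}{\bar{A}}{\bar{A}}} = \lsub{A}{\res{A}{\bar{A}}{\Mod{\bar{A}}}}$ contains every complex of $A$-modules all of whose cohomologies are annihilated by $AeA$. Now I want to reduce a general $X \in \kernel{j^*}$ to this situation. Consider the recollement of Figure~\ref{fig:recollementlifteddgalgebra} with $j^* = \tens{-}{A}{Ae}$, $j_! = \ltens{-}{eAe}{eA}$ and $i^* $ the left adjoint of $i_*$; the defining triangle gives $\tri{j_!j^*X}{X}{i_*i^*X}{}{}{}$. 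Since $j^*X = 0$ we get $j_!j^*X = 0$, hence $X \cong i_*i^*X$, so $X$ lies in $\im{i_*}$. Thus it suffices to show $\im{i_*} \subseteq \lsub{A}{\res{A}{\bar{A}}{\bar{A}}}$ — but I have to be careful that $B$ here is a dg algebra, so $i_*$ need not literally land in complexes of $\bar{A}$-modules. This is where the boundedness hypothesis enters.

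The hypothesis that $\ltens{Ae}{eAe}{eA}$ is bounded in cohomology translates, via the triangle $\tri{j_!j^*A}{A}{i_*i^*A}{}{}{}$ applied to the regular module (and using $j_!j^*A = \ltens{Ae}{eAe}{eA}$), into: $i_*i^*A = i_*(B)$ is bounded in cohomology in $\der{A}$. I would then argue that $i_*(B)$, being bounded in cohomology, lies in $\lsub{A}{\res{A}{\bar{A}}{\bar{A}}}$ by Lemma~\ref{lem:homologyinsubcategories} — one must check each cohomology module $\cohomology{n}{i_*(B)}$ is a $\bar{A}$-module, i.e.\ killed by $AeA$; this holds because applying $j^* = \tens{-}{A}{Ae}$ (exact on modules) to $\cohomology{n}{i_*(B)}$ gives $\cohomology{n}{j^*i_*(B)} = 0$ by $j^* i_* = 0$, and an $A$-module $M$ with $Me = 0$, equivalently $\tens{M}{A}{Ae} = 0$... hmm, $Me = 0$ need not give $M(AeA) = 0$; rather one uses $\tens{M}{A}{Ae} = 0 \iff M \in \Mod{\bar A}$ which is a standard fact for the recollement of module categories. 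Given that, $i_*(B) \in \lsub{A}{\res{A}{\bar{A}}{\bar{A}}}$. Finally, since $\lsub{A}{\res{A}{\bar{A}}{\bar{A}}}$ is localising and $\der{B}$ is generated as a localising subcategory by $B$, and $i_*$ preserves coproducts and triangles, $\im{i_*} = \lsub{A}{i_*(B)} \subseteq \lsub{A}{\res{A}{\bar{A}}{\bar{A}}}$ — more precisely apply Proposition~\ref{prop:imageoffunctor}(i) to $i_* \colon \der{B} \to \der{A}$ with $\cat{S} = \{B\}$ and $\cat{T} = \res{A}{\bar{A}}{\bar{A}}$. This gives $X \cong i_*i^*X \in \lsub{A}{\res{A}{\bar{A}}{\bar{A}}}$, completing the reverse containment. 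Part (ii) is entirely dual: replace $j^*$'s coproduct-preservation by the fact that $i_*$ and the relevant adjoints preserve products, use the injective cogenerator $\cogenerator{\bar{A}}$ in place of $\bar{A}$, invoke Proposition~\ref{prop:imageoffunctor}(ii) and the dual of Lemma~\ref{lem:homologyinsubcategories}.

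The main obstacle I anticipate is the bookkeeping around the dg algebra $B$: one cannot manipulate $i_*(B)$ as an honest complex of $\bar{A}$-modules, so every claim about its cohomology must be routed through the functorial identities $j^* i_* = 0$ and the triangles of the recollement, and the boundedness hypothesis is precisely what lets Lemma~\ref{lem:homologyinsubcategories} convert ``bounded in cohomology with $\bar A$-module cohomologies'' into membership in the generated localising subcategory. A secondary subtlety is justifying $\tens{M}{A}{Ae} = 0 \Leftrightarrow M$ is a $\bar A$-module for an $A$-module $M$; this is standard for the module-category recollement attached to an idempotent and can be cited, but it needs to be invoked cleanly.
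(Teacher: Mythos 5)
Your proposal is correct and follows essentially the same route as the paper: identify $\kernel{j^*}$ with $\im{i_*}$ via the recollement triangles, obtain one containment from $j^*\!\left(\quotient{A}{AeA}{}\right)=0$, and obtain the other by showing that $i_*i^*(A)$ is bounded in cohomology with cohomology modules annihilated by $AeA$, so that it lies in $\lsub{A}{\res{A}{\quotient{A}{AeA}{}}{\quotient{A}{AeA}{}}}$ by Lemma~\ref{lem:homologyinsubcategories} and generates $\im{i_*}$. The only point you gloss over is that part (ii) is not formally dual, since the hypothesis bounds $j_!j^*(A)$ rather than $j_*j^*(\cogenerator{A})$; one transfers boundedness via $\cohomology{m}{j_*j^*(\cogenerator{A})}\cong\dHom{A}{j_!j^*(A)[-m]}{\cogenerator{A}}$, as the paper does (and your worry about $Me=0$ versus $M(AeA)=0$ is unfounded for right modules, since $M(AeA)=(Me)A$).
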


\begin{proof}
    Denote the restriction functor $\res{A}{\quotient{A}{AeA}{}}{}$ by $\res{}{}{}$. Note that there exists a recollement of module categories $(\Mod{(\quotient{A}{AeA}{})},\Mod{A},\Mod{eAe})$ which lifts to a recollement of derived module categories $(R) = (\der{B},\der{A},\der{eAe})$ where $B$ is a dg algebra. In the recollement $(R)$ the functor $\function{j^*}{\der{A}}{\der{eAe}}$ is equal to $\tens{-}{A}{Ae}$. Moreover, since $(R)$ is a recollement $\kernel{j^*} = \im{i_*}$.

    \begin{enumerate}[label = (\roman*)]
        \item Since $j^*(\res{}{}{\quotient{A}{AeA}{}}) = 0$, $i_*$ is fully faithful and $i_*$ preserves set indexed coproducts, $\lsub{A}{\res{}{}{\quotient{A}{AeA}{}}}$ is a subcategory of $\im{i_*}$.
        
        Since $j_!j^*(A)$ is bounded in cohomology, by the triangle
        \begin{equation*}
            \tri{j_!j^*(A)}{A}{i_*i^*(A)}{}{}{},
        \end{equation*}
        we have that $i_*i^*(A)$ is also bounded in cohomology. Thus, by Lemma~\ref{lem:homologyinsubcategories}, $i_*i^*(A)$ is in $\lsub{A}{\res{}{}{\quotient{A}{AeA}{}}}$. Moreover, $\function{i_*i^*}{\der{A}}{\der{A}}$ preserves set indexed coproducts and $i^*$ is essentially surjective so $\im{i_*} = \im{i_*i^*} = \lsub{A}{i_*i^*(A)}$. Thus $\im{i_*}$ is a subcategory of $\lsub{A}{\res{}{}{\quotient{A}{AeA}{}}}$.
        
        \item Similarly, to (i), $\cosub{A}{\res{}{}{\cogenerator{(\quotient{A}{AeA}{})}}}$ is a subcategory of $\im{i_*}$.
        
        Note that for all $m \in \integers$,
        \begin{equation*}
            \cohomology{m}{j_*j^*(\cogenerator{A})} \cong \dHom{A}{A}{j_*j^*(\cogenerator{A})[m]} 
            \cong \dHom{A}{j_!j^*(A)[-m]}{\cogenerator{A}}.
        \end{equation*}
        Thus, since $j_!j^*(A)$ is bounded in cohomology and $\cogenerator{A}$ is an injective cogenerator of $\Mod{A}$, $j_*j^*(\cogenerator{A})$ is bounded in cohomology. Hence $i_*i^!(\cogenerator{A})$ is also bounded in cohomology by the triangle
        \begin{equation*}
            \tri{i_*i^!(\cogenerator{A})}{\cogenerator{A}}{j_*j^*(\cogenerator{A})}{}{}{}.
        \end{equation*}
        Consequently, similarly to (i), $\im{i_*}$ is a subcategory of $\cosub{A}{\res{}{}{\cogenerator{(\quotient{A}{AeA}{})}}}$ using Lemma~\ref{lem:homologyinsubcategories}.
    \end{enumerate}
\end{proof}

\begin{prop} \label{prop:recollementmodcategoriesiga/aea+igeaeimpliesiga}
    Let $A$ be a ring and $e \in A$ an idempotent such that $\ltens{Ae}{eAe}{eA}$ is bounded in cohomology.
    \begin{enumerate}[label = (\roman*)]
        \item Suppose that $\quotient{A}{AeA}{}$ has finite flat dimension as a left $A$-module. If injectives generate for both $\quotient{A}{AeA}{}$ and $eAe$ then injectives generate for $A$.
        
        \item  Suppose that $\quotient{A}{AeA}{}$ has finite projective dimension as a right $A$-module. If projectives cogenerate for both $\quotient{A}{AeA}{}$ and $eAe$ then projectives cogenerate for $A$.
    \end{enumerate}
\end{prop}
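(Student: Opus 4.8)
The strategy is to deduce both parts from Proposition~\ref{prop:recollementimi*}, applied to the recollement $(R) = (\der{B},\der{A},\der{eAe})$ of derived categories into which the module recollement $(\Mod{(\quotient{A}{AeA}{})},\Mod{A},\Mod{eAe})$ lifts, where $B$ is a dg algebra and $j^* = \tens{-}{A}{Ae}$ as in Figure~\ref{fig:recollementlifteddgalgebra}. For this it is enough to locate $\im{i_*} = \kernel{j^*}$ inside $\lsubinj{A}$ (for (i)) and inside $\cosubproj{A}$ (for (ii)). Since $\ltens{Ae}{eAe}{eA}$ is bounded in cohomology by hypothesis, Lemma~\ref{lemma:imi*isgeneratedbyA/AeA} gives $\im{i_*} = \lsub{A}{\res{A}{\quotient{A}{AeA}{}}{\quotient{A}{AeA}{}}}$ in case (i) and $\im{i_*} = \cosub{A}{\res{A}{\quotient{A}{AeA}{}}{\cogenerator{(\quotient{A}{AeA}{})}}}$ in case (ii); as $\lsubinj{A}$ is a localising subcategory and $\cosubproj{A}$ a colocalising subcategory, it therefore suffices to show that the single module $\res{A}{\quotient{A}{AeA}{}}{\quotient{A}{AeA}{}}$ lies in $\lsubinj{A}$ for (i), and that $\res{A}{\quotient{A}{AeA}{}}{\cogenerator{(\quotient{A}{AeA}{})}}$ lies in $\cosubproj{A}$ for (ii).

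This last point is exactly the mechanism used in Lemma~\ref{lem:ring_ext_gen_right_implies_gen_left}, applied here to the quotient homomorphism $\function{\pi}{A}{\quotient{A}{AeA}{}}$. For (i): since ${}_A(\quotient{A}{AeA}{})$ has finite flat dimension, $\lind{A}{\quotient{A}{AeA}{}}{}$ preserves complexes bounded in cohomology (Remark~\ref{remark:(co)indpreservesboundedcomplexes}), hence $\res{A}{\quotient{A}{AeA}{}}{}$ preserves bounded complexes of injectives (Lemma~\ref{lemma:adjointfunctorspreserve}); it also preserves set indexed coproducts, so, as injectives generate for $\quotient{A}{AeA}{}$, Proposition~\ref{prop:imageoffunctor}(i) puts $\im{\res{A}{\quotient{A}{AeA}{}}{}}$ — in particular the regular module $\quotient{A}{AeA}{}$ viewed over $A$ — inside $\lsubinj{A}$. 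For (ii): dually, since $(\quotient{A}{AeA}{})_A$ has finite projective dimension, $\rcoind{A}{\quotient{A}{AeA}{}}{}$ preserves complexes bounded in cohomology, so $\res{A}{\quotient{A}{AeA}{}}{}$ preserves bounded complexes of projectives (Lemma~\ref{lemma:adjointfunctorspreserve}) as well as set indexed products, and Proposition~\ref{prop:imageoffunctor}(ii) together with the hypothesis that projectives cogenerate for $\quotient{A}{AeA}{}$ places $\im{\res{A}{\quotient{A}{AeA}{}}{}}$, hence $\res{A}{\quotient{A}{AeA}{}}{\cogenerator{(\quotient{A}{AeA}{})}}$, inside $\cosubproj{A}$.

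Combining the two paragraphs, $\im{i_*} \subseteq \lsubinj{A}$ under the hypotheses of (i) and $\im{i_*} \subseteq \cosubproj{A}$ under those of (ii); since injectives generate for $eAe$ (resp.\ projectives cogenerate for $eAe$), Proposition~\ref{prop:recollementimi*}(i) (resp.\ (ii)) then yields that injectives generate for $A$ (resp.\ projectives cogenerate for $A$). The only real content beyond assembling earlier results is the identification of $\im{i_*}$ via Lemma~\ref{lemma:imi*isgeneratedbyA/AeA}, which is where the standing cohomological boundedness of $\ltens{Ae}{eAe}{eA}$ enters; once that is in hand the argument is the same reduction used throughout Section~\ref{section:recollements}, so I do not anticipate a serious obstacle. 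The one point worth checking carefully is that Proposition~\ref{prop:recollementimi*}, and the functor properties it relies on (for instance $j_*$ preserving bounded complexes of injectives, which follows since $j^* = \tens{-}{A}{Ae}$ is exact), remain valid for the dg recollement $(R)$; this is fine, as those facts depend only on the recollement axioms and the adjunctions, exactly as in the ring case.
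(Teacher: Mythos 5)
Your proposal is correct and follows essentially the same route as the paper: identify $\im{i_*}=\kernel{j^*}$ as the localising (resp.\ colocalising) subcategory generated by the restricted regular module (resp.\ restricted injective cogenerator) via Lemma~\ref{lemma:imi*isgeneratedbyA/AeA}, use the finite flat (resp.\ projective) dimension hypothesis together with Lemma~\ref{lemma:adjointfunctorspreserve} and Proposition~\ref{prop:imageoffunctor} to place that subcategory inside $\lsubinj{A}$ (resp.\ $\cosubproj{A}$), and conclude with Proposition~\ref{prop:recollementimi*}. The only cosmetic difference is that the paper factors the containment through $\lsub{A}{\res{A}{\quotient{A}{AeA}{}}{\Inj{(\quotient{A}{AeA}{})}}}$ in two steps, whereas you apply Proposition~\ref{prop:imageoffunctor} directly with target $\lsubinj{A}$; the content is identical.
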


\begin{proof}
    Recall that there exists a recollement of dg algebras, $(\der{B},\der{A},\der{eAe})$ with $B$ a dg algebra and $j^* = \tens{-}{A}{Ae}$.

    \begin{enumerate}[label = (\roman*)]
        \item Denote the restriction functor $\res{A}{\quotient{A}{AeA}{}}{}$ as $\res{}{}{}$ and the induction functor $\lind{A}{\quotient{A}{AeA}{}}{}$ as $\lind{}{}{}$. Then, by Lemma~\ref{lemma:imi*isgeneratedbyA/AeA},
        \begin{equation*}
            \im{i_*} = \kernel{j^*} = \lsub{A}{\res{}{}{\quotient{A}{AeA}{}}}.
        \end{equation*}
        Suppose that injectives generate for $\quotient{A}{AeA}{}$. Since $\res{}{}{}$ preserves set indexed coproducts $\im{\res{}{}{}}$ is a subcategory of $\lsub{A}{\res{}{}{\Inj{(\quotient{A}{AeA}{})}}}$ by Proposition~\ref{prop:imageoffunctor}. In particular, $\lsub{A}{\res{}{}{\quotient{A}{AeA}{}}}$ is a subcategory of $\lsub{A}{\res{}{}{\Inj{(\quotient{A}{AeA}{})}}}$. Moreover, if $\quotient{A}{AeA}{}$ has finite flat dimension as a left $A$-module then $\lind{}{}{}$ preserves complexes bounded in cohomology. Thus, by Lemma~\ref{lemma:adjointfunctorspreserve}, $\res{}{}{}$ preserves bounded complexes of injectives and $\res{}{}{\Inj{\left(\quotient{A}{AeA}{}\right)}}$ is a subcategory of $\lsubinj{A}$. Thus $\im{i_*}$ is a subcategory of $\lsubinj{A}$ and the result follows from Proposition~\ref{prop:recollementimi*}.
    
        \item By Lemma~\ref{lemma:imi*isgeneratedbyA/AeA},
        \begin{equation*}
            \im{i_*} = \kernel{j^*} = \cosub{A}{\res{}{}{\cogenerator{(\quotient{A}{AeA}{})}}}.
        \end{equation*}
        Moreover, if projectives cogenerate for $\quotient{A}{AeA}{}$ then, by Proposition~\ref{prop:imageoffunctor}, $\im{\res{}{}{}}$ is a subcategory of $\cosub{A}{\res{}{}{\Proj{(\quotient{A}{AeA}{})}}}$
        If $\quotient{A}{AeA}{}$ has finite projective dimension as a right $A$-module then $\rcoind{}{}{}$ preserves complexes bounded in cohomology and $\res{}{}{}$ preserves bounded complexes of projectives by Lemma~\ref{lemma:adjointfunctorspreserve}. Hence $\res{}{}{\Proj{(\quotient{A}{AeA}{})}}$ is a subcategory of $\cosubproj{A}$. Thus the result follows from Proposition~\ref{prop:recollementimi*}.
    \end{enumerate}
\end{proof}

\subsection{Vertex removal}

Recollements of module categories can be used to study vertex removal operations applied to quiver algebras. Let $A = \quotient{kQ}{I}{}$ be a quiver algebra and let $e \in A$ be an idempotent. Following the ideas of Fuller and Saor\'{\i}n \cite[Section 1]{Fuller1992} and Green, Psaroudakis and Solberg \cite[Section 5]{Green2018} we wish to consider the dependencies between the algebras $A$, $eAe$ and $\quotient{A}{AeA}{}$, with respect to injective generation when the semi-simple $A$-module $\quotient{(1-e)A}{\rad{(1-e)A}}{}$ has finite projective dimension or finite injective dimension. Note that if this semi-simple module has finite projective dimension as an $A$-module then all $\quotient{A}{AeA}{}$-modules have finite projective dimension as $A$-modules.

\begin{lemma} \label{lemma:simplefinitedimimplesmodulesfinitedim}
     Let $A$ be a finite dimensional algebra over a field and $e \in A$ be an idempotent. Let $S$ be the semi-simple $A$-module $\quotient{(1-e)A}{\rad{(1-e)A}}{}$. Let $N$ be an $A$-module that is annihilated by $e$.
     \begin{enumerate}[label = (\roman*)]
         \item If $S$ has finite injective dimension as an $A$-module then $N$ has finite injective dimension as an $A$-module.
         \item If $S$ has finite projective dimension as an $A$-module then $N$ has finite projective dimension as an $A$-module.
     \end{enumerate}
\end{lemma}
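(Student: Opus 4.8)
The plan is to reduce to the case that $N$ is a simple module, and then to observe that every simple $A$-module annihilated by $e$ is a direct summand, as an $A$-module, of $S$.

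First I would note that an $A$-module $N$ with $Ne = 0$ is precisely a module over $\Gamma \coloneqq \quotient{A}{AeA}{}$, since $N(AeA) = NeA = 0$. Writing $J = \rad{\Gamma}$, the ring $\Gamma$ is finite dimensional, so $J^m = 0$ for some $m$, and hence for any $\Gamma$-module $N$ the radical filtration $N \supseteq NJ \supseteq \dots \supseteq NJ^m = 0$ is a \emph{finite} filtration whose layers $NJ^i/NJ^{i+1}$ are semisimple $\Gamma$-modules, that is, (possibly infinite) direct sums of simple $\Gamma$-modules. A short exact sequence $\ses{X}{Y}{Z}{}{}$ of $A$-modules gives $\projdim{A}{Y} \leq \max\{\projdim{A}{X},\projdim{A}{Z}\}$ and $\injdim{A}{Y} \leq \max\{\injdim{A}{X},\injdim{A}{Z}\}$ from the long exact sequences of $\text{Ext}$ groups, so induction along the radical filtration reduces the problem to bounding the projective and injective dimensions over $A$ of semisimple $\Gamma$-modules. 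For projective dimension this is automatic, since $\projdim{A}{\bigoplus_i M_i} = \sup_i \projdim{A}{M_i}$; for injective dimension I would use that $A$, being finite dimensional, is Noetherian, so a direct sum of injective $A$-modules is injective and therefore $\injdim{A}{\bigoplus_i M_i} \leq \sup_i \injdim{A}{M_i}$. It then suffices to bound $\projdim{A}{T}$ and $\injdim{A}{T}$ for $T$ a simple $\Gamma$-module.

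The main step is to show that every simple $\Gamma$-module is a direct summand of $S$ as an $A$-module. The projection $\function{\pi}{A}{\Gamma}$ restricts to a surjection of right $A$-modules $(1-e)A \twoheadrightarrow \Gamma$, because for every $a \in A$ one has $\pi(a) = \pi((1-e)a)$, as $ea \in AeA$. A module epimorphism induces an epimorphism on tops, so this yields a surjection of $A$-modules $S = \quotient{(1-e)A}{\rad{(1-e)A}}{} \twoheadrightarrow \quotient{\Gamma}{\rad{\Gamma}}{}$; since $S$ is semisimple it splits, so $\quotient{\Gamma}{\rad{\Gamma}}{}$ is a direct summand of $S$ over $A$. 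As every simple $\Gamma$-module is a direct summand of $\quotient{\Gamma}{\rad{\Gamma}}{}$, and the projective (respectively injective) dimension of a direct summand never exceeds that of the ambient module, we obtain $\projdim{A}{T} \leq \projdim{A}{S}$ and $\injdim{A}{T} \leq \injdim{A}{S}$ for every simple $\Gamma$-module $T$.

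Combining the two steps, every $A$-module $N$ annihilated by $e$ satisfies $\injdim{A}{N} \leq \injdim{A}{S}$ and $\projdim{A}{N} \leq \projdim{A}{S}$, which gives (i) and (ii). The one point I would be careful about is the behaviour of injective dimension under infinite direct sums in the reduction step --- this is exactly where finite-dimensionality of $A$ is needed; if one only wants the statement for finite dimensional $N$, this input can be dropped, as the radical filtration then has finite semisimple layers.
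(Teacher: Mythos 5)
Your proof is correct and follows essentially the same route as the paper, which argues in two lines that the radical series of $N$ has semisimple layers built from direct summands of coproducts of copies of $S$ and then concludes by finiteness of the filtration. Your version simply supplies the details the paper leaves implicit, in particular the splitting of $\text{top}(\quotient{A}{AeA}{})$ off $S$ and the Noetherian input needed for injective dimension to behave well under infinite direct sums.
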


\begin{proof}
    Since $Ne$ is zero the radical series of $N$ contains only direct summands of set indexed coproducts of $S$. Hence if $S$ has finite injective (projective) dimension then $N$ has finite injective (respectively projective) dimension.
\end{proof}

This idea was generalised to arbitrary ring homomorphisms by Fuller and Saor\'{\i}n \cite{Fuller1992} and then Green, Psaroudakis and Solberg \cite[Section 3]{Green2018}. In particular, given a ring homomorphism $\function{\lambda}{A}{B}$ they consider the $A$-relative projective global dimension of $B$,
\begin{equation*}
    \relprojdim{A}{B} \coloneqq \supremum{\projdim{A}{\res{A}{B}{M_B}} : M_B \in \Mod{B} }.
\end{equation*}
Similarly, they also consider the $A$-relative injective global dimension of $B$,
\begin{equation*}
    \relinjdim{A}{B} \coloneqq \supremum{\injdim{A}{\res{A}{B}{M_B}} : M_B \in \Mod{B} }.
\end{equation*}

Note that Lemma~\ref{lemma:simplefinitedimimplesmodulesfinitedim} shows that, for a finite dimensional algebra, if the semi-simple $A$-module $S = \quotient{(1-e)A}{\rad{(1-e)A}}{}$ has finite projective dimension then $\relprojdim{A}{\quotient{A}{AeA}{}} < \infty$. Similarly, if $S$ has finite injective dimension then $\relinjdim{A}{\quotient{A}{AeA}{}} < \infty$. Using this we can apply the results of Proposition~\ref{prop:recollementimi*}, to the vertex removal operation.

\begin{prop}
    Let $A$ be a ring and $e \in A$ be an idempotent such that $\ltens{Ae}{eAe}{eA}$ is bounded in cohomology.
    \begin{enumerate}[label = (\roman*)]
        \item Suppose that $\relinjdim{A}{\quotient{A}{AeA}{}} < \infty$. If injectives generate for $eAe$ then injectives generate for $A$. 
        \item Suppose that $\relprojdim{A}{\quotient{A}{AeA}{}} < \infty$. If projectives cogenerate for $eAe$ then projectives cogenerate for $A$. 
    \end{enumerate}
\end{prop}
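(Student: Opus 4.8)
The plan is to apply Proposition~\ref{prop:recollementimi*} to the recollement of derived categories $(\der{B},\der{A},\der{eAe})$ induced by the recollement of module categories $(\Mod{(\quotient{A}{AeA}{})},\Mod{A},\Mod{eAe})$ of Figure~\ref{fig:recollementlifteddgalgebra}, in which $j^* = \tens{-}{A}{Ae}$, the functor $i_*$ is fully faithful with image $\kernel{j^*}$, and $B$ is a dg algebra. Since injectives generate for $eAe$ is assumed in (i) [and projectives cogenerate for $eAe$ in (ii)], it will be enough to show that $\im{i_*}$ is a subcategory of $\lsubinj{A}$ [respectively of $\cosubproj{A}$]; then Proposition~\ref{prop:recollementimi*} yields the conclusion. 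This is the same strategy as in Proposition~\ref{prop:recollementmodcategoriesiga/aea+igeaeimpliesiga}, except that the dimension hypothesis here forces the whole of $\im{i_*}$ into $\lsubinj{A}$ directly, so no generation assumption on $\quotient{A}{AeA}{}$ itself is needed.

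For (i), write $\res{}{}{}$ for $\res{A}{\quotient{A}{AeA}{}}{}$. Because $\ltens{Ae}{eAe}{eA}$ is bounded in cohomology, Lemma~\ref{lemma:imi*isgeneratedbyA/AeA}~(i) identifies $\im{i_*}$ with $\lsub{A}{\res{}{}{\quotient{A}{AeA}{}}}$, the localising subcategory generated by the single right $A$-module obtained by restricting $\quotient{A}{AeA}{}$ along $A \to \quotient{A}{AeA}{}$. As $\quotient{A}{AeA}{}$ is one of the $\quotient{A}{AeA}{}$-modules over which the supremum defining $\relinjdim{A}{\quotient{A}{AeA}{}}$ is taken, we get $\injdim{A}{\res{}{}{\quotient{A}{AeA}{}}} \leq \relinjdim{A}{\quotient{A}{AeA}{}} < \infty$, so $\res{}{}{\quotient{A}{AeA}{}}$ is quasi-isomorphic to a bounded complex of injective $A$-modules. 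Such a complex lies in $\lsubinj{A}$ (see Section~\ref{section:localisingsubcategory}), and since $\lsubinj{A}$ is a localising subcategory it then contains all of $\lsub{A}{\res{}{}{\quotient{A}{AeA}{}}} = \im{i_*}$. Now Proposition~\ref{prop:recollementimi*}~(i) applies.

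Part (ii) is dual: Lemma~\ref{lemma:imi*isgeneratedbyA/AeA}~(ii) identifies $\im{i_*}$ with $\cosub{A}{\res{}{}{\cogenerator{(\quotient{A}{AeA}{})}}}$, and since $\cogenerator{(\quotient{A}{AeA}{})}$ is a right $\quotient{A}{AeA}{}$-module we obtain $\projdim{A}{\res{}{}{\cogenerator{(\quotient{A}{AeA}{})}}} \leq \relprojdim{A}{\quotient{A}{AeA}{}} < \infty$; hence $\res{}{}{\cogenerator{(\quotient{A}{AeA}{})}}$ is quasi-isomorphic to a bounded complex of projective $A$-modules, which lies in $\cosubproj{A}$, so $\im{i_*} \subseteq \cosubproj{A}$ and Proposition~\ref{prop:recollementimi*}~(ii) finishes the argument. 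I do not expect a genuine obstacle: the substantive content — identifying $\im{i_*}$ with the relevant (co)localising subcategory (which is where the boundedness of $\ltens{Ae}{eAe}{eA}$ enters) and the passage ``$\im{i_*}$ inside the (co)generated subcategory $\Rightarrow$ (co)generation for $A$'' — is already carried out in Lemma~\ref{lemma:imi*isgeneratedbyA/AeA} and Proposition~\ref{prop:recollementimi*}. The only points needing care are the bookkeeping observations that the module generating $\im{i_*}$, namely $\quotient{A}{AeA}{}$ in (i) and $\cogenerator{(\quotient{A}{AeA}{})}$ in (ii), is itself a $\quotient{A}{AeA}{}$-module, so that its injective (respectively projective) dimension over $A$ is bounded by $\relinjdim{A}{\quotient{A}{AeA}{}}$ (respectively $\relprojdim{A}{\quotient{A}{AeA}{}}$), together with the standard fact that a bounded complex of injectives (respectively projectives) already lies in $\lsubinj{A}$ (respectively $\cosubproj{A}$).
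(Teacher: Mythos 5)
Your proposal is correct and follows essentially the same route as the paper: identify $\im{i_*}$ with $\lsub{A}{\res{A}{\quotient{A}{AeA}{}}{\quotient{A}{AeA}{}}}$ (respectively its colocalising analogue) via Lemma~\ref{lemma:imi*isgeneratedbyA/AeA}, use the finiteness of $\relinjdim{A}{\quotient{A}{AeA}{}}$ (respectively $\relprojdim{A}{\quotient{A}{AeA}{}}$) to place the generator in $\lsubinj{A}$ (respectively $\cosubproj{A}$), and conclude by the argument of Proposition~\ref{prop:recollementimi*}. The only cosmetic remark is that, since the left-hand term of the induced recollement is a dg algebra, one should strictly invoke the \emph{proof} of Proposition~\ref{prop:recollementimi*} (which only uses $\der{A}$ and $\der{eAe}$) rather than the proposition as stated for rings, as the paper itself does.
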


\begin{proof}
    \begin{enumerate}[label = (\roman*)]
        \item Since $\ltens{Ae}{eAe}{eA}$ is bounded in cohomology, 
        \begin{equation*}
            \im{i_*} = \lsub{A}{\res{}{}{\quotient{A}{AeA}{}}},
        \end{equation*}
        by Lemma~\ref{lemma:imi*isgeneratedbyA/AeA}.
        Furthermore, as $\relinjdim{A}{\quotient{A}{AeA}{}} < \infty$, $\res{}{}{\quotient{A}{AeA}{}}$ has finite injective dimension as an $A$-module. Hence $\res{}{}{\quotient{A}{AeA}{}}$ is in $\lsubinj{A}$ and $\im{i_*}$ is a subcategory of $\lsubinj{A}$. Thus the proof of Proposition~\ref{prop:recollementimi*} applies.
        
        \item Follows similarly to (i).
    \end{enumerate}
\end{proof}

Green, Psaroudakis and Solberg show that if $\relprojdim{A}{\quotient{A}{AeA}{}} \leq 1$ then $\function{\pi}{A}{\quotient{A}{AeA}{}}$ is a homological ring epimorphism \cite[Proposition 3.5 (iv)]{Green2018}. Note that $\pi$ is a homological ring epimorphism if and only if $\res{A}{\quotient{A}{AeA}{}}{}$ is a homological embedding \cite[Corollary 3.13]{Psaroudakis2014}. In this situation the abelian recollement lifts to a recollement of derived module categories of algebras, not dg algebras \cite{Cline1996}, \cite[Proposition 2.1]{Huegel2017-dgalgebra}, \cite[Subsection 1.7]{AngeleriHuegelKoenigLiu2011}.

\begin{lemma}
    Let $A$ be a ring and $e \in A$ be an idempotent.
    \begin{enumerate}[label = (\roman*)]
     \item Suppose that $\relinjdim{A}{\quotient{A}{AeA}{}} \leq 1$. 
        \begin{enumerate}[label = (\alph*)]
            \item Injectives generate for $A$ if and only if injectives generate for $eAe$.
        \end{enumerate}
        Moreover, suppose that $A$ is a finite dimensional algebra over a field.
        \begin{enumerate}[resume]
            \item  Projectives cogenerate for $A$ if and only if projectives cogenerate for $eAe$.
        \end{enumerate}      
        \item Suppose that $\relprojdim{A}{\quotient{A}{AeA}{}} \leq 1$. 
        \begin{enumerate}
            \item Projectives cogenerate for $A$ if and only if projectives cogenerate for $eAe$.
        \end{enumerate}
        Moreover, suppose that $A$ is a finite dimensional algebra over a field.
        \begin{enumerate}[resume]
            \item Injectives generate for $A$ if and only if injectives generate for $eAe$.
        \end{enumerate}
    \end{enumerate}
\end{lemma}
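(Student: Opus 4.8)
The plan is to lift the abelian recollement $(\Mod{\bar A},\Mod A,\Mod{eAe})$, with $\bar A \coloneqq \quotient{A}{AeA}{}$, to a recollement of derived categories and feed it into the results of Section~\ref{section:recollements}. The first step is to observe that the hypothesis ``$\relinjdim_A\bar A\le 1$'' (respectively ``$\relprojdim_A\bar A\le 1$'') forces $\function{\pi}{A}{\bar A}$ to be a homological ring epimorphism --- by \cite[Proposition 3.5(iv)]{Green2018} in the projective case, and dually in the injective case --- so the abelian recollement lifts to a recollement of derived module categories of rings $(R)=(\der{\bar A},\der A,\der{eAe})$ with $i_*=\res{A}{\bar A}{}$ and $j^*=\tens{-}{A}{Ae}$. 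Here the fibre of $A\twoheadrightarrow\bar A$ is $AeA$ in degree zero, so $j_!j^*A\simeq\ltens{Ae}{eAe}{eA}$ is bounded in cohomology and Lemma~\ref{lemma:imi*isgeneratedbyA/AeA} applies. Since $\relinjdim_A\bar A<\infty$ makes every $\bar A$-module restrict to an $A$-module of finite injective dimension, $i_*$ preserves bounded complexes of injectives, so $(R)$ restricts to a bounded below recollement by Proposition~\ref{prop:recollementbdedbelowconditions}; moreover Lemma~\ref{lemma:imi*isgeneratedbyA/AeA} gives $\im{i_*}=\lsub A{\res{A}{\bar A}{\bar A}}$, and $\res{A}{\bar A}{\bar A}$ lies in $\lsubinj A$ (finite injective dimension), so $\im{i_*}\subseteq\lsubinj A$. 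The projective analogues are exactly dual, using $\relprojdim_A\bar A<\infty$, Proposition~\ref{prop:recollementsbdedaboveconditions}, and $\im{i_*}=\cosub A{\res{A}{\bar A}{\cogenerator{\bar A}}}\subseteq\cosubproj A$.

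With this setup, parts (i)(a) and (ii)(a) follow immediately. For (i)(a), ``injectives generate for $A$'' implies ``injectives generate for $eAe$'' by Proposition~\ref{prop:recollementbdedbelow}(ii), and the converse follows from $\im{i_*}\subseteq\lsubinj A$ together with Proposition~\ref{prop:recollementimi*}(i); part (ii)(a) is the dual statement, using Proposition~\ref{prop:recollementsbdedabove}(ii) and Proposition~\ref{prop:recollementimi*}(ii).

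For parts (i)(b) and (ii)(b) I would bring in finite-dimensionality: the $k$-duality $D\coloneqq\Homgroup{k}{-}{k}$ induces a duality $\der A\simeq\der{A^{\mathrm{op}}}^{\mathrm{op}}$ exchanging coproducts with products, hence localising with colocalising subcategories, sending $\Inj A$ to $\Proj{A^{\mathrm{op}}}$, identifying $eAe$ with $eA^{\mathrm{op}}e$, $\bar A$ with $\overline{A^{\mathrm{op}}}$, and $\relinjdim_A\bar A$ with $\relprojdim_{A^{\mathrm{op}}}\overline{A^{\mathrm{op}}}$ (relative homological dimensions being testable on finite-dimensional modules). Under $D$ the statement ``projectives cogenerate for $A$'' corresponds to ``injectives generate for $A^{\mathrm{op}}$'', so (i)(b) for $A$ is precisely (ii)(b) for $A^{\mathrm{op}}$ and conversely; it therefore suffices to prove one of them directly, say (ii)(b), where the bounded above recollement from $\relprojdim_A\bar A<\infty$ is in force and finite-dimensionality of $A$ makes $i_*$ preserve compact objects (Proposition~\ref{prop:recollementsbdedaboveconditions}(iii)), putting $(R)$ into a ladder so that Proposition~\ref{prop:recollementsextonedown} and its corollary become available. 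The hard part will be exactly this: the recollement machinery most directly relates injective generation for $A$ to injective generation for the \emph{left}-hand term $\bar A$, whereas the claim concerns the \emph{right}-hand term $eAe$, so one must use $\relprojdim_A\bar A\le 1$ to bridge the two --- for instance by showing that $\ltens{Ae}{eAe}{eA}$ being concentrated in degree zero forces $j^*$ to preserve bounded complexes of injectives (so that Lemma~\ref{lemma:recollementj*}(i) applies), or that $\relprojdim_A\bar A\le 1$ already forces injective generation for $\bar A$, making the ``$\bar A$'' hypothesis of Proposition~\ref{prop:recollementsextonedown}(i) automatic. The remaining verifications are routine bookkeeping with the dictionary of Lemma~\ref{lemma:adjointfunctorspreserve} and Table~\ref{table:recollementsfunctorprop}.
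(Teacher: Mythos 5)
Your setup (lifting the abelian recollement to a recollement of ordinary derived module categories via the homological ring epimorphism) and your treatment of parts (i)(a) and (ii)(a) are correct and close to the paper's, the only difference being that for the backward implications you go through $\im{i_*}=\lsub{A}{\res{A}{\quotient{A}{AeA}{}}{\quotient{A}{AeA}{}}}\subseteq\lsubinj{A}$ and Proposition~\ref{prop:recollementimi*}, whereas the paper extracts a stronger consequence from the hypothesis that you never establish: since $\pi$ is a homological epimorphism, $i_*=\res{A}{\quotient{A}{AeA}{}}{}$ is fully faithful on derived categories, so for every $\quotient{A}{AeA}{}$-module $N$ the counit gives $N\simeq\lind{A}{\quotient{A}{AeA}{}}{}(\res{A}{\quotient{A}{AeA}{}}{N})$ (resp.\ the unit gives $N\simeq\rcoind{A}{\quotient{A}{AeA}{}}{}(\res{A}{\quotient{A}{AeA}{}}{N})$), and as $\res{A}{\quotient{A}{AeA}{}}{N}$ has projective (resp.\ injective) dimension at most $1$ over $A$, this exhibits $N$ as having finite projective (resp.\ injective) dimension over $\quotient{A}{AeA}{}$. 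Hence $\quotient{A}{AeA}{}$ has finite global dimension, so injectives generate and projectives cogenerate for it automatically. This is the paper's key step and the one your proposal only floats as a possibility.

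For parts (b) there are two genuine gaps. First, the reduction of (i)(b) to (ii)(b) via $D=\Homgroup{k}{-}{k}$ rests on the claim that ``projectives cogenerate for $A$'' is equivalent to ``injectives generate for $A^{\mathrm{op}}$''. That equivalence is neither stated nor proved anywhere in this paper and is not formal: $D$ is a duality only on finite-dimensional modules, $D$ of an arbitrary injective module need not be projective, and the (co)localising subcategories in question are closed under arbitrary (co)products, so the dictionary between $\lsubinj{A^{\mathrm{op}}}$ and $\cosubproj{A}$ requires a separate argument. Second, and decisively, you explicitly leave the forward implication of (ii)(b) unresolved, offering two candidate strategies without carrying out either; a proof cannot end with ``the hard part will be exactly this''. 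The paper avoids both issues by proving the finite-global-dimension statement above (which makes every hypothesis on $\quotient{A}{AeA}{}$ in Propositions~\ref{prop:recollementbdedbelow} and~\ref{prop:recollementsbdedabove} automatic) and then invoking those propositions in full, including their finite-dimensional parts coming from the ladder extension of Proposition~\ref{Huegel2017:recollementclassificationext} and Proposition~\ref{prop:recollementsextonedown}. To complete your write-up you must prove that $\quotient{A}{AeA}{}$ has finite global dimension and then argue (i)(b) and (ii)(b) directly from those propositions, rather than via the unproved op-ring duality.
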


\begin{proof}
    If either $\relinjdim{A}{\quotient{A}{AeA}{}} \leq 1$ or $\relprojdim{A}{\quotient{A}{AeA}{}} \leq 1$, then the restriction functor $\res{A}{\quotient{A}{AeA}{}}{}$ is a homological embedding \cite[Proposition 3.5 (iv), Remark 5.9]{Green2018}. Thus $\function{\pi}{A}{\quotient{A}{AeA}{}}$ is a homological ring epimorphism \cite[Corollary 3.13]{Psaroudakis2014}. Hence the recollement of module categories $(\Mod{(\quotient{A}{AeA}{})},\Mod{A},\Mod{eAe})$ lifts to a recollement of derived module categories of the same rings, $(R) = (\der{\quotient{A}{AeA}{}},\der{A},\der{eAe})$ with $i_* = \tens{-}{\quotient{A}{AeA}{}}{\quotient{A}{AeA}{}}$ and $j^* = \tens{-}{A}{Ae}$, \cite{Cline1996}, \cite[Subsection 1.7]{AngeleriHuegelKoenigLiu2011}, \cite[Proposition 2.1]{Huegel2017-dgalgebra}.

    \begin{enumerate}[label = (\roman*)]
        \item Now suppose that $\relinjdim{A}{\quotient{A}{AeA}{}} \leq 1$. We claim that $\quotient{A}{AeA}{}$ has finite global dimension. Denote $\res{A}{\quotient{A}{AeA}{}}{}$ by $\res{}{}{}$ and $\rcoind{A}{\quotient{A}{AeA}{}}{}$ by $\rcoind{}{}{}$. Let $N$ be an $\quotient{A}{AeA}{}$-module. Then $\res{}{}{N}$ has finite injective dimension as an $A$-module so $\rcoind{}{}{} \circ \res{}{}{N}$ is quasi-isomorphic to a bounded complex of injectives. Since $(R)$ is a recollement of derived module categories $\res{}{}{}$ is fully faithful as a functor of derived categories. Thus $\rcoind{}{}{} \circ \res{}{}{N}$ is quasi-isomorphic to $N$ and $N$ has finite injective dimension as an $\quotient{A}{AeA}{}$-module. Consequently, $\quotient{A}{AeA}{}$ has finite global dimension so injectives generate for $\quotient{A}{AeA}{}$ and projectives cogenerate for $\quotient{A}{AeA}{}$.
        
        Since $i_* = \res{}{}{}$ preserves bounded complexes of injectives Proposition~\ref{prop:recollementbdedbelow} applies.
    
        \item 
        
        Similarly, if $\relprojdim{A}{\quotient{A}{AeA}{}} \leq 1$ then $\quotient{A}{AeA}{}$ has finite global dimension by considering $\lind{}{}{} \circ \res{}{}{N}$ for each $\quotient{A}{AeA}{}$-module $N$. Moreover, $i_* = \res{}{}{}$ preserves bounded complexes of projectives. Thus the statements follow from Proposition~\ref{prop:recollementsbdedabove}.

    \end{enumerate}
    
\end{proof}

\bibliographystyle{alpha}
\bibliography{ring_constructions_generation}

\medskip

\noindent 
\footnotesize \textsc{C. Cummings, School of Mathematics, University of Bristol, Bristol, BS8 1UG, UK}
\\
\noindent  \textit{E-mail address:} 
{\texttt{c.cummings@bristol.ac.uk}}

\end{document}